\documentclass[11pt,twoside,reqno]{amsart}

\usepackage{microtype}
\usepackage[OT1]{fontenc}
\usepackage{type1cm}
\usepackage{amssymb}
\usepackage{enumerate}
\usepackage{xcolor}
\usepackage{mathrsfs}
\usepackage{dsfont}
\usepackage{multicol}
\usepackage{tikz}
\usetikzlibrary{calc,patterns,angles,quotes,arrows,decorations,positioning,decorations.pathmorphing,decorations.text}
\usepackage{pgfplots}
\pgfplotsset{compat=newest}
\usepackage{verbatim}

\usepackage{cite}

\usepackage[left=2.3cm,top=3cm,right=2.3cm]{geometry}
\geometry{a4paper,centering}
%\geometry{letter,centering}

%\renewcommand{\captionlabeldelim}{.}
%\renewcommand{\thefigure}{\Alph{figure}}
%\setcaptionwidth{.9\textwidth}

\numberwithin{equation}{section}

\theoremstyle{plain}
\newtheorem{theorem}{Theorem}[section]

\newtheorem{proposition}[theorem]{Proposition}

\newtheorem{lemma}[theorem]{Lemma}

\theoremstyle{remark}

\newtheorem{example}[theorem]{Example}

\theoremstyle{definition}

\newcommand{\MM}{\mathcal{M}}

\newcommand{\CC}{\mathcal{C}}

\newcommand{\R}{\mathbb{R}}
\newcommand{\RP}{\mathbb{RP}^1}

\newcommand{\N}{\mathbb{N}}
\newcommand{\hhh}{\mathtt{h}}
\newcommand{\iii}{\mathtt{i}}
\newcommand{\jjj}{\mathtt{j}}
\newcommand{\kkk}{\mathtt{k}}

\newcommand{\eps}{\varepsilon}
\newcommand{\fii}{\varphi}
\newcommand{\roo}{\varrho}

\newcommand{\A}{\mathsf{A}}

\newcommand{\dd}{\,\mathrm{d}}
%\newcommand{\1}{\mathds{1}}

% The following gives `Jon Fraser' inequality signs
\renewcommand{\ge}{\geqslant}
\renewcommand{\le}{\leqslant}
\renewcommand{\geq}{\geqslant}
\renewcommand{\leq}{\leqslant}

\DeclareMathOperator{\dimh}{dim_H}

\DeclareMathOperator{\diml}{dim_L}

\DeclareMathOperator{\dima}{dim_A}

\DeclareMathOperator{\cdima}{\mathcal{C}dim_A}
\DeclareMathOperator{\cdimh}{\mathcal{C}dim_H}

\DeclareMathOperator{\Ker}{Ker}
\DeclareMathOperator{\Tan}{Tan}
\DeclareMathOperator{\linspan}{span}
\DeclareMathOperator{\dist}{dist}
\DeclareMathOperator{\diam}{diam}
\DeclareMathOperator{\proj}{proj}
\DeclareMathOperator{\nproj}{\overline{proj}}
\DeclareMathOperator{\conv}{conv}

\DeclareMathOperator{\spt}{spt}
\DeclareMathOperator{\rank}{rank}

\renewcommand{\atop}[2]{\genfrac{}{}{0pt}{}{#1}{#2}}

%%%%%%%%%%%%%%%%%%%%%%%%%%%%%%%%%%%%%%%%%%%%%%%%%%%%%%%%%%%%%%%%%%%%%
\begin{document}

\title{Assouad dimension of planar self-affine sets}

\author{Bal\'azs B\'ar\'any}
\address[Bal\'azs B\'ar\'any]
        {Budapest University of Technology and Economics \\
         Department of Stochastics \\
         MTA-BME Stochastics Research Group \\
         P.O.\ Box 91 \\
         1521 Budapest \\
         Hungary}
\email{balubsheep@gmail.com}

\author{Antti K\"aenm\"aki}
\address[Antti K\"aenm\"aki]
        {Department of Physics and Mathematics \\
         University of Eastern Finland \\
         P.O.\ Box 111 \\
         FI-80101 Joensuu \\
         Finland}
\email{antti.kaenmaki@uef.fi}

\author{Eino Rossi}
\address[Eino Rossi]
        {Department of Mathematics and Statistics \\
        P.O.\ Box 68  (Pietari Kalmin katu 5) \\
        FI-00014 University of Helsinki\\
        Finland}
\email{eino.rossi@gmail.com}

\subjclass[2010]{Primary 28A80; Secondary 37C45, 37L30.}
\keywords{Self-affine set, tangent set, Assouad dimension, conformal dimension}
\date{\today}
\thanks{B\'ar\'any acknowledges support from the grants OTKA K123782, NKFI PD123970, the J\'anos Bolyai Research Scholarship of the Hungarian Academy of Sciences, and the New National Excellence Program of the Ministry of Human Capacities \'UNKP-18-4-BME-385. K\"aenm\"aki thanks the Academy of Finland (project no.\ 286877) for financial support. Rossi was funded by the University of Helsinki via the project ``Quantitative rectifiability of sets and measures in Euclidean Spaces and Heisenberg groups'' (project no.\ 7516125).}

\begin{abstract}
  We calculate the Assouad dimension of a planar self-affine set $X$ satisfying the strong separation condition and the projection condition and show that $X$ is minimal for the conformal Assouad dimension. Furthermore, we see that such a self-affine set $X$ adheres to very strong tangential regularity by showing that any two points of $X$, which are generic with respect to a self-affine measure having simple Lyapunov spectrum, share the same collection of tangent sets.
\end{abstract}

\maketitle

\section{Introduction}

The goal of the paper is to calculate the Assouad dimension of planar self-affine sets satisfying the strong separation condition and the projection condition. Roughly speaking, the assumptions require that the self-affine set is constructed by using disjoint construction pieces such that it projects to a line segment for sufficiently many directions. While traditionally the Assouad dimension has been used to study quasiconformal mappings and embeddability problems, it has recently gained a lot of interest in fractal geometry; see e.g.\ \cite{Fraser2014,FraserHendersonOlsonRobinson2015,FraserHowroyd2017,FraserJordan2017,KOR, Mackay}. The Assouad dimension of a set is the maximal dimension possible to obtain by looking at coverings. It serves as an upper bound for the Hausdorff dimension.

Dimension theory on self-affine sets is an active research topic and during recent years, it has progressed a lot; see e.g.\ \cite{barany2017hausdorff,barany2018birkhoff,BaranyKaenmaki2017,BaranyKaenmakiKoivusalo2017,Feng2019,FraserHowroyd2017,FraserJordan2017,KOR,kolossvary2018triangular,Mackay,MorrisShmerkin2019}. It is currently not known how the Assouad dimension of a self-affine set and the affinity dimension, a natural upper bound for all the other standard dimensions, are related. Heuristic arguments suggest that in general, the Assouad dimension is strictly larger than the affinity dimension. Therefore, it is not possible to apply the methods which are usually used to study dimensions on self-affine sets. To the best of our knowledge, the works \cite{FraserHowroyd2017,FraserJordan2017,Mackay} are the sole papers addressing this question. They all consider the problem on different types of self-affine carpets: the standard carpets and sponges are studied in \cite{Mackay} and \cite{FraserHowroyd2017}, respectively, whereas the setting in \cite{FraserJordan2017} allows more freedom in the placement of the construction pieces while at the same time, requires the pieces to have the same shape, i.e.\ assumes homogeneity.

It was recently proved in \cite{KOR} that the Assouad dimension of a compact set can equivalently be defined to be the maximal Hausdorff dimension of weak tangent sets, Hausdorff limits of successive magnifications of the set. This introduces a method to address the problem we are considering. Indeed, we will develop a machinery to study the tangential structure of self-affine sets. To give some intuition, the reader familiar with the Ledrappier-Young theory for measures on self-affine sets (see \cite{Barany2015,BaranyKaenmaki2017,Feng2019}) may interpret this machinery as a Ledrappier-Young theory for self-affine sets. The Ledrappier-Young theory guarantees that the dimension of a measure is the sum of the dimensions of the projection and a generic slice, whereas in our case, we similarly conclude that the Assouad dimension is the sum of the dimensions of the projection and the maximal slice.

As a first concrete outcome of our considerations, we show that generic points of a self-affine set share the same collection of tangent sets. While the observation improves the results in \cite{BandtKaenmaki2013,KaenmakiKoivusaloRossi2017}, it also reveals that self-affine sets adhere to very strong tangential regularity. Furthermore, by relying on the developed machinery, under the strong separation condition and the projection condition, we manage to calculate the Assouad dimension for a large class of self-affine sets which include self-affine sets defined by dominated and strongly irreducible matrices, and simultaneously non-diagonalizable upper-triangular matrices having first diagonal element strictly larger than the second one. Our theorem thus is a notable generalization of the earlier results on this topic. Finally, by finding a tangent set with almost maximal Hausdorff dimension, we generalize the results in \cite{KOR,Mackay} by showing that the self-affine sets considered in this paper are minimal for the conformal Assouad dimension.

We refer the impatient reader to Section \ref{sec:main-results} where we have collected the main results. Section \ref{sec:preli} is devoted to preliminaries and the proofs of the main results can be found in Sections \ref{sec:tangent-sets} and \ref{sec:assouad-dimension}.

\section{Preliminaries} \label{sec:preli}

\subsection{Shift space}
Let $\Sigma = \{ 1,\ldots,N \}^\N$ be the collection of all infinite words obtained from integers $\{ 1,\ldots,N \}$. If $\iii = i_1i_2\cdots \in \Sigma$, then we define $\iii|_n = i_1 \cdots i_n$ for all $n \in \N$. The empty word $\iii|_0$ is denoted by $\varnothing$. Define $\Sigma_n = \{ \iii|_n : \iii \in \Sigma \}$ for all $n \in \N$ and $\Sigma_* = \bigcup_{n \in \N} \Sigma_n \cup \{ \varnothing \}$. Thus $\Sigma_*$ is the collection of all finite words. The length of $\iii \in \Sigma_* \cup \Sigma$ is denoted by $|\iii|$.
The concatenation of two words $\iii \in \Sigma_*$ and $\jjj \in \Sigma_* \cup \Sigma$ is denoted by $\iii\jjj$. Let $\sigma$ be the left shift operator defined by $\sigma\iii = i_2i_3\cdots$ for all $\iii = i_1i_2\cdots \in \Sigma$. If $\iii \in \Sigma_n$ for some $n$, then we set $[\iii] = \{ \jjj \in \Sigma : \jjj|_n = \iii \}$. The set $[\iii]$ is called a \emph{cylinder set}. The \emph{shift space} $(\Sigma,\sigma)$ is compact in the topology generated by the cylinder sets. Moreover, the cylinder sets are open and closed in this topology and they generate the Borel $\sigma$-algebra.

\subsection{Products of matrices}
Let $\mathbb{RP}^1$ be the real projective line, that is, the set of all lines through the origin in $\R^2$. Let $\mathsf{A} = (A_1,\ldots,A_N) \in GL_2(\R)^N$ and write $A_\iii = A_{i_1} \cdots A_{i_n}$ for all $\iii = i_1 \cdots i_n \in \Sigma_n$ and $n \in \N$. We say that $\A$ is \emph{irreducible} if there does not exist $V \in \RP$ such that $A_iV=V$ for all $i \in \{ 1,\ldots,N \}$; otherwise $\A$ is \emph{reducible}. The tuple $\A$ is \emph{strongly irreducible} if there does not exist a finite set $\mathcal{V} \subset \RP$ such that $A_i\mathcal{V}=\mathcal{V}$ for all $i\in\{1,\ldots,N\}$. In a reducible tuple $\A$, all the matrices are simultaneously upper triangular in some basis. For tuples with more than one element, strong irreducibility is a generic property.

We say that $\mathsf{A} = (A_1,\ldots,A_N) \in GL_2(\R)^\N$ is \emph{dominated} if there exist constants $C>0$ and $0<\tau<1$ such that
\begin{equation} \label{eq:domination}
  \frac{|\det(A_\iii)|}{\|A_\iii\|^2}\leq C \tau^{|\iii|}
\end{equation}
for all $\iii \in \Sigma_*$. We call a proper subset $\CC\subset\RP$ a \emph{multicone} if it is a finite union of closed projective intervals. We say that $\CC\subset\RP$ is a \emph{strongly invariant multicone} for $\mathsf{A}$ if it is a multicone and $A_i\CC\subset\CC^o$ for all $i \in \{1,\ldots,N\}$. Here $\CC^o$ is the interior of $\CC$. By \cite[Theorem~B]{BochiGourmelon2009}, $\mathsf{A}$ has a strongly invariant multicone if and only if $\mathsf{A}$ is dominated.

A matrix $A$ is called \emph{proximal} if it has two real eigenvalues with different absolute value, \emph{conformal} if it has two complex eigenvalues, and \emph{parabolic} if it is neither conformal nor proximal. If $\mathsf{A}$ is dominated, then, by \cite[Corollary 2.4]{BaranyKaenmakiMorris2018}, $\mathsf{A}$ contains only proximal elements. For a proximal matrix $A$, let $\lambda_u(A)$ and $\lambda_s(A)$ be the largest and smallest eigenvalues of $A$ in absolute value, respectively. Note that $|\lambda_u(A)|=\|A|u(A)\|$ and $|\lambda_s(A)|=\|A|s(A)\|$, where $u(A)\in\RP$ is the \emph{unstable direction}, i.e.\ the eigenspace of $A$ corresponding to $\lambda_u(A)$, and $s(A)\in\RP$ is the \emph{stable direction}, i.e.\ the eigenspace corresponding to $\lambda_s(A)$. In other words, $u(A) = \Ker(A-\lambda_u(A)I)$ and $s(A) = \Ker(A-\lambda_s(A)I)$. Let $\mathsf{A} = (A_1,\ldots,A_N) \in GL_2(\R)^N$ and $\mathcal{S}(\mathsf{A})$ be the subsemigroup of $GL_2(\mathbb{R})$ generated by $\mathsf{A}$. Note that $\overline{\mathbb{R}\mathcal{S}(\mathsf{A})}$, the closure of the set $\{cA : c\in\R \text{ and } A\in\mathcal{S}(\mathsf{A})\}$, is a subsemigroup of $M_2(\mathbb{R})$, the vector space of all $2 \times 2$ real matrices. Define
\begin{equation*}
  \mathscr{R}(\mathsf{A}) = \{ A \in \overline{\mathbb{R}\mathcal{S}(\mathsf{A})} : \rank(A)=1 \}.
\end{equation*}
Recall that, by \cite[Lemma 3.1]{BaranyKaenmakiMorris2018}, $\mathscr{R}(\mathsf{A}) \ne \emptyset$ if and only if $\mathcal{S}(\mathsf{A})$ contains at least one proximal or parabolic element. If $\mathscr{R}(\mathsf{A}) \ne \emptyset$, then we define
\begin{equation}\label{eq:YF}
  Y_F = \{V^{\perp} \in \mathbb{RP}^1 : V=A\R^2 \text{ for some } A \in \mathscr{R}(\mathsf{A}) \}.
\end{equation}
If $\mathsf{A}$ is dominated, then $Y_F$ is the closure of all possible orthogonal complements of the unstable directions $u(A)$ of proximal elements $A$ in $\mathcal{S}(\mathsf{A})$; see \cite[Lemma 3.4]{BaranyKaenmakiMorris2018}. Analogously, let $\mathcal{S}^{-1}(\mathsf{A})$ be the subsemigroup of $GL_2(\mathbb{R})$ generated by $(A_1^{-1},\ldots,A_N^{-1})$ and note that $\overline{\mathbb{R}\mathcal{S}^{-1}(\mathsf{A})}$ is a subsemigroup of $M_2(\mathbb{R})$. Define
\begin{equation*}
  \overleftarrow{\mathscr{R}}(\mathsf{A}) = \{ A \in \overline{\mathbb{R}\mathcal{S}^{-1}(\mathsf{A})} : \rank(A)=1 \}
\end{equation*}
and note that $\overleftarrow{\mathscr{R}}(\mathsf{A}) \ne \emptyset$ if and only if $\mathscr{R}(\mathsf{A}) \ne \emptyset$. If $\mathscr{R}(\mathsf{A}) \ne \emptyset$, then we define the set $X_F$ of all possible \emph{Furstenberg directions}, which is the closure of the unstable directions of the proximal and parabolic elements of $\mathcal{S}^{-1}(\mathsf{A})$, to be
\begin{equation*}
  X_F = \{V \in \mathbb{RP}^1 : V=A\R^2 \text{ for some } A \in \overleftarrow{\mathscr{R}}(\mathsf{A}) \}.
\end{equation*}

\begin{lemma}\label{lem:furstenberg}
  Let $\mathsf{A} = (A_1,\ldots,A_N) \in GL_2(\R)^N$ be such that $\mathscr{R}(\mathsf{A}) \ne \emptyset$. Then the closure of $\bigcup_{\iii\in\Sigma_*}A_{\iii}^{-1}Y_F$ contains $X_F$.
\end{lemma}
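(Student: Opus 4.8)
The plan is to exploit the duality between the forward semigroup $\mathcal{S}(\A)$ and the inverse semigroup $\mathcal{S}^{-1}(\A)$ supplied by the adjugate (classical adjoint) map $\operatorname{adj}$. For a $2\times 2$ matrix, $\operatorname{adj}$ is the \emph{linear} involution obtained by swapping the diagonal entries and negating the off-diagonal entries; it reverses products, preserves rank, is a homeomorphism of $M_2(\R)$, and satisfies $\operatorname{adj}(M)=\det(M)M^{-1}$ for invertible $M$. The first step is to observe that $\operatorname{adj}$ intertwines the two semigroups: since $\operatorname{adj}(A_\iii^{-1})=\det(A_\iii)^{-1}A_\iii$ and $\operatorname{adj}(cM)=c\operatorname{adj}(M)$, we have $\operatorname{adj}\bigl(\R\mathcal{S}^{-1}(\A)\bigr)\subseteq\R\mathcal{S}(\A)$, and because $\operatorname{adj}$ is a homeomorphism, $\operatorname{adj}\bigl(\overline{\R\mathcal{S}^{-1}(\A)}\bigr)=\overline{\operatorname{adj}(\R\mathcal{S}^{-1}(\A))}\subseteq\overline{\R\mathcal{S}(\A)}$. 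As $\operatorname{adj}$ preserves rank, this yields $\operatorname{adj}\bigl(\overleftarrow{\mathscr{R}}(\A)\bigr)\subseteq\mathscr{R}(\A)$.

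Next I fix $W\in X_F$ and aim to show $W\in\overline{\bigcup_{\iii}A_\iii^{-1}Y_F}$. By definition $W=B\R^2$ for some rank-one $B\in\overleftarrow{\mathscr{R}}(\A)$, and I write $B=vw^T$ with $v,w\in\R^2\setminus\{0\}$, so that $W=\la v\ra$ and $\Ker B=\la w\ra^\perp$. By the previous paragraph $\operatorname{adj}(B)\in\mathscr{R}(\A)$, and a direct computation gives $\operatorname{adj}(vw^T)\R^2=\la w\ra^\perp$ (its columns are multiples of the vector orthogonal to $w$). Hence, by the definition \eqref{eq:YF} of $Y_F$, the orthogonal complement $\bigl(\operatorname{adj}(B)\R^2\bigr)^\perp=\la w\ra$ belongs to $Y_F$. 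The crucial point is that the element of $Y_F$ produced this way, namely $\la w\ra$, is \emph{transverse} to $\Ker B=\la w\ra^\perp$.

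Finally I use this transversality together with the magnification by the inverse cocycle. Writing $B=\lim_k c_kA_{\iii_k}^{-1}$ with $c_k\in\R$ and $\iii_k\in\Sigma_*$ (possible since $B\in\overline{\R\mathcal{S}^{-1}(\A)}$ and every element of $\mathcal{S}^{-1}(\A)$ equals $A_\iii^{-1}$ for a suitable word), I evaluate at the fixed vector $w$. Because $w^Tw=|w|^2\neq 0$, we get $c_kA_{\iii_k}^{-1}w\to Bw=|w|^2v\neq 0$, and therefore $A_{\iii_k}^{-1}\la w\ra\to\la v\ra=W$ in $\RP$. Since each $A_{\iii_k}^{-1}\la w\ra$ lies in $\bigcup_{\iii}A_\iii^{-1}Y_F$, this gives $W\in\overline{\bigcup_{\iii}A_\iii^{-1}Y_F}$, and as $W\in X_F$ was arbitrary, the lemma follows.

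I expect the only genuine obstacle to be the conceptual step of locating, inside $Y_F$, a direction transverse to $\Ker B$: without transversality the magnified images $A_{\iii_k}^{-1}V$ could degenerate (if $V=\Ker B$ the limit is undefined) rather than converge to $W$. The adjugate computation resolves this automatically, since it exhibits $\la w\ra\in Y_F$ with $\la w\ra\neq\la w\ra^\perp=\Ker B$. Everything else—continuity of $\operatorname{adj}$, its intertwining of the two semigroups, the rank-one image computation, and the projective convergence—is routine verification.
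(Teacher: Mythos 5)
Your proof is correct, and it takes a genuinely different route from the paper's. The paper fixes $V\in X_F$ with an approximating sequence $B_n\in\mathcal{S}^{-1}(\mathsf{A})$, observes that $B_nW\to V$ for every $W\in\RP$ with at most one exception (the kernel direction of the rank-one limit), and is then forced into a case analysis: either $\bigcup_{\iii\in\Sigma_*}A_\iii^{-1}Y_F$ contains a non-exceptional direction, and the claim follows at once, or the union is exactly the exceptional singleton $\{Z\}$; this degenerate case is ruled out by contradiction, using that all generators fix $Z$ together with a norm estimate showing that $B_n^{-1}/\|B_n^{-1}\|$ subconverges to a rank-one matrix with image $Z$, whence $Z^\perp\in Y_F=\{Z\}$, which is absurd. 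Your adjugate argument removes the case analysis entirely: for $V=\la v\ra$ with rank-one witness $B=vw^T$ you exhibit an explicit element $\la w\ra\in Y_F$ (via $\operatorname{adj}(B)\in\mathscr{R}(\mathsf{A})$) that is automatically transverse to $\Ker B=\la w\ra^\perp$, so the images $A_{\iii_k}^{-1}\la w\ra$ converge to $V$ and no exceptional direction can interfere. The two proofs secretly share the same mechanism---the paper's contradiction step passes to $B_n^{-1}\in\mathcal{S}(\mathsf{A})$, which is a scalar multiple of $\operatorname{adj}(B_n)$---but where the paper needs compactness, a subsequence, and the estimate $\|B_n|Z\|/\|B_n|L_n\|\to 0$ to manufacture a rank-one element of $\mathscr{R}(\mathsf{A})$, the linearity and continuity of $\operatorname{adj}$ on $M_2(\R)$ hand you $\operatorname{adj}(B)$ for free, and do so for every $V$ rather than only in the degenerate case. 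What the paper's route buys is that it avoids introducing the adjugate formalism; what yours buys is a shorter, uniform, constructive argument. Your supporting verifications (that $\operatorname{adj}$ maps $\R\mathcal{S}^{-1}(\mathsf{A})$ into $\R\mathcal{S}(\mathsf{A})$ and preserves rank, that $\operatorname{adj}(vw^T)$ has image $\la w\ra^\perp$, and that every element of $\mathcal{S}^{-1}(\mathsf{A})$ equals $A_\iii^{-1}$ for a suitable---reversed---word $\iii$) are all accurate.
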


\begin{proof}
  Let $V\in X_F$. Then there exists $A\in\overleftarrow{\mathscr{R}}(\mathsf{A})$ such that $A\R^2=V$ and $\| A \| = 1$. By the definition of $\overleftarrow{\mathscr{R}}(\mathsf{A})$, there exists a sequence of matrices $B_n\in\mathcal{S}^{-1}(\mathsf{A})$ such that $B_n/\|B_n\|\to A$. Thus, $B_nW\to V$ as $n\to\infty$ for every $W\in\RP$ except possibly at most one. Therefore, if $\bigcup_{\iii\in\Sigma_*}A_{\iii}^{-1} Y_F$ contains also other points than this exceptional singleton, then the statement follows.

  Assume then that $\bigcup_{\iii\in\Sigma_*}A_{\iii}^{-1} Y_F=\{Z\}$. Then we have that all $A\in\mathsf{A}$ fix $Z$ and $B_n Z = \Ker(A)$ for all $n$. For each $n\in\N$ fix $L_n \in \mathbb{RP}^1$ be so that $\|B_n|L_n\| = \|B_n\|$. Then we have that $\| B_n | Z \| / \| B_n | L_n \| \to 0$ and so $\| B_n^{-1} |  Z \| / \| B_n^{-1} | B_n( L_n) \| \to \infty$. Thus, by passing to a subsequence if necessary, we have that $B_n^{-1}/\|B_n^{-1}\| \to B$ for which $B\R^2 = Z$ and so $Z^\perp \in Y_F$, which is a contradiction.
\end{proof}

\subsection{Lyapunov spectrum}
Let $\MM_\sigma(\Sigma)$ denote the collection of all $\sigma$-invariant Borel probability measures on $(\Sigma,\sigma)$. We say that a measure $\nu$ on $\Sigma$ is \emph{fully supported} if $\spt(\nu)=\Sigma$. A probability measure $\nu$ on $(\Sigma,\sigma)$ is \emph{Bernoulli} if there exist a probability vector $(p_1,\ldots,p_N)$ such that
$$
  \nu([\iii])=p_{i_1}\cdots p_{i_{n}}
$$
for all $\iii=i_1\cdots i_n\in\Sigma_n$ and $n \in \N$. It is well-known that Bernoulli measures are ergodic. We say that $\nu \in \MM_{\sigma^n}(\Sigma)$ is an \emph{$n$-step Bernoulli} if it is a Bernoulli measure on $(\Sigma,\sigma^n)$. Note that every Bernoulli measure is an $n$-step Bernoulli measure.

Let $A \in GL_2(\R)$ and $u(A^TA) = \Ker(A^TA-\|A\|I)$ be the eigenspace of $A^TA$ associated with $\|A\|$. Note that the singular values of $A$ are $\|A\| = \|A | u(A^TA)\|$ and $\|A^{-1}\|^{-1} = \|A^{-1} | u((A^{-1})^TA^{-1})\|^{-1}$. Let $\mathsf{A}=(A_1,\ldots,A_N) \in GL_2(\R)^N$ and define
\begin{align*}
  \vartheta_1(\iii|_n) &= A_{i_1} \cdots A_{i_n}(u((A_{i_1} \cdots A_{i_n})^TA_{i_1} \cdots A_{i_n})), \\
  \vartheta_2(\iii|_n) &= A_{i_1}^{-1} \cdots A^{-1}_{i_n}(u((A_{i_1}^{-1} \cdots A^{-1}_{i_n})^TA_{i_1}^{-1} \cdots A^{-1}_{i_n}))
\end{align*}
for all $\iii = i_1i_2\cdots \in \Sigma$ and $n \in \N$. By Kingman's Ergodic Theorem, it is well known that for each ergodic $\nu \in \MM_\sigma(\Sigma)$ there exist numbers $0 < \chi_1(\nu) \le \chi_2(\nu)$ such that
\begin{align*}
   \chi_1(\nu) &= -\lim_{n \to \infty} \tfrac{1}{n} \log \|A_{i_1} \cdots A_{i_n}\|, \\
   \chi_2(\nu) &= -\lim_{n \to \infty} \tfrac{1}{n} \log \|A_{i_1}^{-1} \cdots A^{-1}_{i_n}\|^{-1}
\end{align*}
for $\nu$-almost all $i_1i_2\cdots \in \Sigma$. The numbers are called \emph{Lyapunov exponents}. Another application of Kingman's Ergodic Theorem shows that
\begin{equation*}
  \chi_2(\nu) = -\lim_{n \to \infty} \tfrac{1}{n} \int_{\Sigma} \log \|A_{i_1}^{-1} \cdots A^{-1}_{i_n}\|^{-1} \dd\nu(\iii) = -\lim_{n \to \infty} \tfrac{1}{n} \log \|A_{i_n}^{-1} \cdots A^{-1}_{i_1}\|^{-1}
\end{equation*}
for $\nu$-almost all $i_1i_2\cdots \in \Sigma$. If $\chi_1(\nu) < \chi_2(\nu)$, then we say that $\nu$ has a \emph{simple Lyapunov spectrum}. Note that if $\mathsf{A}$ is dominated, then, by \eqref{eq:domination}, every measure has simple Lyapunov spectrum. It follows from \cite[Theorem 4.2]{MorrisShmerkin2019} that if $\nu$ has simple Lyapunov spectrum, then the semigroup $\mathcal{S}(\mathsf{A})$ contains a proximal element. Furthermore, by Oseledets' Theorem, the limit directions
\begin{align*}
  \vartheta_1(\iii) &= \lim_{n \to \infty}\vartheta_1(\iii|_n) \in \RP, \\
  \vartheta_2(\iii) &= \lim_{n \to \infty}\vartheta_2(\iii|_n) \in \RP
\end{align*}
exist for $\nu$-almost all $\iii \in \Sigma$. The measure $\mu_F = \vartheta_2\nu$ is called the \emph{Furstenberg measure} with respect to $\nu$. Here, if $\nu$ is a measure on $\Sigma$, then the pushforward measure of $\nu$ under a measurable map $f$ defined on $\Sigma$ is denoted by $f\nu$. Recall that, by \cite[Theorem~II.3.6]{BougerolLacroix1985}, for each $V\in\spt(\mu_F)\subset X_F$ it holds that
\begin{equation}\label{eq:furst}
  \chi_2(\nu) = -\lim_{n\to\infty}\tfrac{1}{n}\log\|A_{i_n}^{-1}\cdots A_{i_1}^{-1}|V\|^{-1}
\end{equation}
for $\nu$-almost all $i_1i_2\cdots \in \Sigma$. If $\nu$ is a Bernoulli measure and $(p_1,\ldots,p_N)$ the associated probability vector, then we call a measure $m$ on $\RP$ \emph{$\nu$-stationary} if
$$
  \int_{\RP} f(V)\dd m(V)=\sum_{i=1}^Np_i\int_{\RP} f(A_{i}^{-1}V)\dd m(V)
$$
for all continuous functions $f \colon \RP \to \R$. Observe that the Furstenberg measure $\mu_F$ is $\nu$-stationary. Let $T \colon \Sigma \times \RP \to \Sigma \times \RP$ be the skew-product defined by
\begin{equation} \label{eq:skew-product}
  T(\iii,V) = (\sigma\iii,A_{\iii|_1}^{-1} V).
\end{equation}
The measure $\nu \times \mu_F$ on $\Sigma \times \RP$ is $T$-invariant and ergodic; see e.g.\ \cite[Theorem~2.2]{BaranyKaenmaki2017}.

The following lemma characterizes what kind of matrix tuples there can be if we assume the existence of a Bernoulli measure having simple Lyapunov spectrum.

\begin{lemma}\label{lem:charac}
  Let $\mathsf{A} = (A_1,\ldots,A_N) \in GL_2(\R)^N$ and $\nu$ be a fully supported Bernoulli measure having simple Lyapunov spectrum. Then either $\mathsf{A}$ is strongly irreducible or there exists $M \in GL_2(\R)$ such that $MA_iA_jM^{-1}$ is upper-triangular for all $i,j \in \{1,\ldots,N\}$.
\end{lemma}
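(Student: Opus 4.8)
The plan is to show that the failure of strong irreducibility, together with the simple spectrum hypothesis, forces a very rigid picture: a common invariant pair of directions which the generators can at worst interchange. First I would unwind the definition of strong irreducibility. If $\mathsf{A}$ is not strongly irreducible, there is a nonempty finite set $\mathcal{V} \subset \RP$ with $A_i\mathcal{V} = \mathcal{V}$ for every $i$, so each $A_i$, being invertible, permutes $\mathcal{V}$. Since $\nu$ has simple Lyapunov spectrum, the cited result of Morris--Shmerkin provides a proximal element $B \in \mathcal{S}(\mathsf{A})$, say $B = A_\iii$ with $\iii \in \Sigma_*$; as a product of the $A_i$ it also permutes $\mathcal{V}$. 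A suitable power $B^k$ then fixes $\mathcal{V}$ pointwise, and because a proximal matrix and all its powers have on $\RP$ exactly the two fixed points $u(B)$ and $s(B)$, this forces $\mathcal{V} \subseteq \{u(B), s(B)\}$. Hence $\#\mathcal{V} \in \{1,2\}$.

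If $\#\mathcal{V} = 1$, all $A_i$ share one invariant line, so taking $M$ that sends this line to the first coordinate axis makes every $MA_iM^{-1}$, and a fortiori every $MA_iA_jM^{-1}$, upper triangular. So I would focus on $\#\mathcal{V} = 2$, writing $\mathcal{V} = \{V_1, V_2\}$ and fixing $M \in GL_2(\R)$ with $MV_1 = \langle e_1\rangle$ and $MV_2 = \langle e_2\rangle$. In this basis each $MA_iM^{-1}$ is a monomial (generalized permutation) matrix: diagonal if $A_i$ fixes both $V_1, V_2$, and anti-diagonal if $A_i$ interchanges them. A product of two monomial matrices is diagonal precisely when the two factors are of the same type, so all products $A_iA_j$ will fix both $V_1$ and $V_2$, and thus be diagonal after conjugation by $M$, as soon as no generator interchanges $V_1$ and $V_2$. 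Everything therefore reduces to excluding interchanging generators.

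The hard part will be exactly this exclusion, and it is the only place where the full force of the simple spectrum hypothesis is used. Here I would show that an interchanging generator is incompatible with $\chi_1(\nu) < \chi_2(\nu)$. Put $B_i = MA_iM^{-1}$; since conjugation by a fixed matrix alters norms only by a bounded factor, the Lyapunov exponents of $(B_1,\ldots,B_N)$ coincide with $\chi_1(\nu)$ and $\chi_2(\nu)$. As each $B_i$ is monomial, the two singular values of the product $B_{i_1}\cdots B_{i_n}$ are $\|B_{i_1}\cdots B_{i_n}e_1\|$ and $\|B_{i_1}\cdots B_{i_n}e_2\|$. I would then track the coordinate axis occupied by the image of $e_1$ under successive multiplications: it is unchanged when a diagonal generator is applied and flips when an anti-diagonal one is applied. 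Since $\nu$ is a fully supported Bernoulli measure, an interchanging generator is selected with a fixed positive probability and the selections are i.i.d., so this two-state process equidistributes between the axes; the image of $e_2$ occupies the complementary axis at every step, hence equidistributes as well. Consequently $\tfrac1n\log\|B_{i_1}\cdots B_{i_n}e_1\|$ and $\tfrac1n\log\|B_{i_1}\cdots B_{i_n}e_2\|$ converge $\nu$-almost surely to the same limit, the average of the two per-step logarithmic multipliers, forcing $\chi_1(\nu) = \chi_2(\nu)$ and contradicting simple spectrum. Thus no generator interchanges $V_1$ and $V_2$, every $A_i$ fixes both directions, and each $MA_iA_jM^{-1}$ is diagonal, completing the proof. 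In fact this argument yields the slightly stronger conclusion that the individual conjugates $MA_iM^{-1}$ are already upper triangular.
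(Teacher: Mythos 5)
Your proposal is correct, and while its skeleton (reduce to at most two invariant directions, conjugate to monomial form, use simple spectrum to rule out the bad configuration) matches the paper's, the two key steps are carried out with genuinely different tools. For the reduction, the paper invokes Bougerol--Lacroix (Proposition~4.3) to produce a $V$ whose orbit under $\mathcal{S}(\mathsf{A})$ has at most two elements, whereas you argue directly from the definition: the finite invariant set $\mathcal{V}$ is permuted by the proximal element $B$, a power $B^k$ fixes it pointwise, and a proximal matrix has exactly two fixed points in $\RP$; this is more elementary and self-contained. For the exclusion step, the paper rules out only the \emph{mixed} case (a diagonal and an antidiagonal generator coexisting), and does so by first identifying the Furstenberg measure as $\tfrac12(\delta_{e_1}+\delta_{e_2})$ via non-atomicity results of Bougerol--Lacroix and then applying Birkhoff's theorem to the skew product $T$ with the ergodic measure $\nu\times\mu_F$; you instead rule out \emph{every} interchanging generator by an elementary occupation-time argument for the two-state axis chain, using that the singular values of a monomial product are $\|B_{i_n}\cdots B_{i_1}e_1\|$ and $\|B_{i_n}\cdots B_{i_1}e_2\|$. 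Your route avoids Furstenberg-measure theory entirely and yields the stronger conclusion that each $MA_iM^{-1}$ is itself triangular (indeed, the all-antidiagonal case, which the paper tolerates because products of two antidiagonal matrices are diagonal, is incompatible with simple spectrum: both axis growth rates then equal half the determinant growth rate); the paper's route, on the other hand, reuses machinery (stationary measures, the skew product, the ergodicity of $\nu\times\mu_F$) that is developed anyway and exploited again, e.g., in Lemma~\ref{lem:xfissupp}. Two small points you should make precise when writing this up: the ``equidistribution'' is the occupation-time law of large numbers for an irreducible two-state Markov chain driven by the i.i.d.\ selections, which holds even in the periodic case where every generator interchanges the axes; and tracking the image of $e_1$ ``under successive multiplications'' really concerns the reversed products $B_{i_n}\cdots B_{i_1}$, which is legitimate because for Bernoulli measures the forward and reversed products have the same Lyapunov exponents (as the paper itself notes for $\chi_2$).
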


\begin{proof}
  Let us assume that $\mathsf{A}$ is not strongly irreducible. As $\nu$ has simple Lyapunov spectrum, the semigroup $\mathcal{S}(\mathsf{A})$ contains a proximal element. Thus, by \cite[Proposition~4.3]{BougerolLacroix1985}, there exists a subspace $V\in\RP$ such that $\{AV:A\in\mathcal{S}(\mathsf{A})\}$ contains at most two elements. If $\{AV:A\in\mathcal{S}(\mathsf{A})\}$ contains just one element, say, $\{AV:A\in\mathcal{S}(\mathsf{A})\}=\{W\}$, then clearly, $A_iW=W$ for all $i\in\{1,\ldots,N\}$. Hence, the matrices in $\mathsf{A}$ are simultaneously conjugated to upper-triangular matrices and we are done.

  If $\{AV:A\in\mathcal{S}(\mathsf{A})\}$ contains two elements, say, $\{AV:A\in\mathcal{S}(\mathsf{A})\}=\{W_1,W_2\}$, then clearly, $A_iW_j\in\{W_1,W_2\}$ for both $j \in \{1,2\}$ and for all $i\in\{1,\ldots,N\}$. Thus, by applying a change of coordinates, we may assume without loss of generality that
  $$
    A_i=
    \begin{pmatrix}
      a_i & 0 \\
      0 & b_i
    \end{pmatrix}\quad\text{or}\quad
    A_i=
    \begin{pmatrix}
      0 & a_i \\
      b_i & 0
    \end{pmatrix}
  $$
  for all $i\in\{1,\ldots,N\}$. If the matrices in $\mathsf{A}$ are all diagonal or they are all antidiagonal, then we are again done as the product of two antidiagonal matrices is diagonal. Thus, the only remaining case to be dealt with is the one where there exist $i\neq j$ such that
  $$
    A_i=
    \begin{pmatrix}
      a_i & 0 \\
      0 & b_i
    \end{pmatrix}\quad\text{and}\quad
    A_j=
    \begin{pmatrix}
      0 & a_j \\
      b_j & 0
    \end{pmatrix}.
  $$
  Let us first show that in this case $\mu_F=\frac{1}{2}(\delta_{e_1}+\delta_{e_2})$, where $\delta_{e_k}$ is the Dirac mass at the coordinate axis $e_k$. This follows by simple linear algebra if we can show that $\mu_F$ is atomic. Let $\mu_F'$ be the measure $\mu_F$ with all the atoms removed and suppose for a contradiction that $\mu_F'$ is non-trivial. Since $\mu_F'$ is non-atomic and $\nu$-stationary, we get, by \cite[Theorem~3.6(i)]{BougerolLacroix1985}, that $\lim_{n\to\infty}\sphericalangle(A_{i_n}\cdots A_{i_1}V,A_{i_n}\cdots A_{i_1}W)=0$ for all $V,W\in\RP$ and for $\nu$-almost all $\iii \in \Sigma$. Choosing $V=e_1$ and $W=e_2$, we see that this is not possible. Thus, $\mu_F'$ must be trivial and, consequently, $\mu_F=\frac{1}{2}(\delta_{e_1}+\delta_{e_2})$. Recalling that $\nu \times \mu_F$ is $T$-invariant and ergodic, Birkhoff Ergodic Theorem implies that
  \[
  \begin{split}
    \lim_{n\to\infty}\tfrac{1}{n}\log\|A_{i_n}^{-1}\cdots A_{i_1}^{-1}|e_j\|&=\tfrac{1}{2}\sum_{i=1}^Np_i(\log\|A_i^{-1}|e_1\|+\log\|A_i^{-1}|e_2\|)\\
    &=\tfrac{1}{2}\sum_{i=1}^Np_i\log|\det(A_i^{-1})|=\tfrac{1}{2}(\chi_1(\nu)+\chi_2(\nu)),
    \end{split}
  \]
  for $\nu$-almost all $\iii \in \Sigma$ and for both $j \in \{1,2\}$. Note that $\|A\| \in \{ \|A|e_1\|, \|A|e_2\| \}$ for any diagonal or antidiagonal matrix $A$. Therefore,
  \[
  \begin{split}
  \tfrac{1}{2}(\chi_1(\nu)+\chi_2(\nu))&<\chi_2(\nu)=\lim_{n\to\infty}\tfrac{1}{n}\log\|A_{i_n}^{-1}\cdots A_{i_1}^{-1}\| \\
  &\leq\lim_{n\to\infty} \max_{j \in \{1,2\}} \tfrac{1}{n}\log \|A_{i_n}^{-1}\cdots A_{i_1}^{-1}|e_j\| = \tfrac{1}{2}(\chi_1(\nu)+\chi_2(\nu))
  \end{split}
  \]
  which is a contradiction. Thus, the case in which $\mathsf{A}$ contains both diagonal and antidiagonal matrices is not possible.
\end{proof}

\begin{lemma}\label{lem:xfissupp}
  Let $\mathsf{A} = (A_1,\ldots,A_N) \in GL_2(\R)^N$ and $\nu$ be a fully supported Bernoulli measure having simple Lyapunov spectrum. Then there exist two fully supported $2$-step Bernoulli measures $\nu_1$ and $\nu_2$ having simple Lyapunov spectrum such that
  \begin{equation*}
    \spt(\mu_F^1)\cup\spt(\mu_F^2)=X_F,
  \end{equation*}
  where $\mu_F^i$ is the Furstenberg measure with respect to $\nu_i$.
\end{lemma}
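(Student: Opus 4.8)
The plan is to invoke Lemma~\ref{lem:charac} to reduce to two cases: either $\mathsf{A}$ is strongly irreducible, or, after a change of coordinates, every product $A_iA_j$ is upper-triangular, so that the first coordinate axis $e_1$ is a common fixed direction of all length-two words. In the strongly irreducible case I would simply take $\nu_1=\nu_2=\nu$: since $\nu$ is Bernoulli it is in particular $2$-step Bernoulli, and it is fully supported with simple Lyapunov spectrum by hypothesis. It then remains to see that $\spt(\mu_F)=X_F$. The inclusion $\spt(\mu_F)\subseteq X_F$ is built into the definition of $X_F$. For the converse, the inverse tuple $(A_1^{-1},\ldots,A_N^{-1})$ is again strongly irreducible (a finite set invariant under all $A_i^{-1}$ is invariant under all $A_i$) and, by simple Lyapunov spectrum, $\mathcal{S}(\mathsf{A})$ and hence $\mathcal{S}^{-1}(\mathsf{A})$ contains a proximal element. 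Thus $\mu_F$, being $\nu$-stationary, is the unique stationary measure and its support is the unique minimal closed invariant subset of $\RP$; every attracting direction of a proximal element of $\mathcal{S}^{-1}(\mathsf{A})$ lies in this minimal set, so $X_F\subseteq\spt(\mu_F)$.

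The substance of the lemma is the reducible case, where I would work on the $2$-step shift with the upper-triangular matrices $B_w=A_iA_j$, $w=(i,j)\in\{1,\ldots,N\}^2$, writing $B_w=\left(\begin{smallmatrix}\alpha_w&\gamma_w\\0&\delta_w\end{smallmatrix}\right)$. Passing to pairs is exactly what forces ``$2$-step'': when the $A_i$ are antidiagonal they swap $e_1$ and $e_2$, and only the pairs are upper-triangular. In the affine chart $s=v_1/v_2$, in which $e_1$ sits at $\infty$, the inverse map $B_w^{-1}$ acts as the affine map $s\mapsto r_w s+c_w$ with $r_w=\delta_w/\alpha_w$. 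For upper-triangular cocycles the two Lyapunov exponents are governed by the $\nu$-averages of $\log|\alpha_w|$ and $\log|\delta_w|$, so simple spectrum is equivalent to these averages being distinct, i.e.\ to $\sum_w q_w\log|r_w|\neq0$ for the governing probability vector $(q_w)$.

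I would then read off $X_F$ from the sign regimes. A proximal product $B^{-1}$ has unstable direction $e_1$ precisely when its diagonal satisfies $\prod|\alpha|<\prod|\delta|$, and a non-$e_1$ direction (the attracting fixed point of the corresponding contraction $s\mapsto rs+c$) precisely when $\prod|\alpha|>\prod|\delta|$. Hence $X_F=\{e_1\}\cup\Lambda$, where $\Lambda$ is the closure of these fixed points, namely the attractor of the affine iterated function system $\{s\mapsto r_w s+c_w\}$ on the contracting branch. The construction is now to bias the weights. Choosing a fully supported probability vector concentrated on a pair with $|\alpha_w|<|\delta_w|$ gives a $2$-step Bernoulli $\nu_1$ with $\sum_w q_w\log|r_w|>0$, for which the affine maps expand on average and the stationary measure degenerates to $\mu_F^1=\delta_{e_1}$; concentrating instead on a pair with $|\alpha_w|>|\delta_w|$ gives $\nu_2$ with $\sum_w q_w\log|r_w|<0$, a genuinely contracting full-support affine system whose unique stationary measure $\mu_F^2$ has support equal to the attractor $\Lambda$. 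Both vectors keep every weight positive (full support) and give distinct diagonal averages (simple spectrum). If one of the two regimes is absent, the corresponding part of $X_F$ is empty and I take $\nu_1=\nu_2$ equal to the surviving choice. In every case $\spt(\mu_F^1)\cup\spt(\mu_F^2)=\{e_1\}\cup\Lambda=X_F$, using the automatic inclusion $\spt(\mu_F^i)\subseteq X_F$ for the reverse containment.

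The main obstacle I expect is the precise identification $\spt(\mu_F^2)=\Lambda=X_F\setminus\{e_1\}$ together with the verification that passing from the one-step inverse semigroup (which defines $X_F$) to the two-step upper-triangular products loses no Furstenberg directions. This requires checking that odd-length words contribute no new rank-one limits in the diagonal and antidiagonal sub-cases (there the antidiagonal factors are not proximal and merely permute $e_1$ and $e_2$), that the fixed points of the contracting branch are dense in the attractor of the affine system so their closure is exactly $\spt(\mu_F^2)$, and a careful treatment of the boundary parabolic products where $\prod|\alpha|=\prod|\delta|$.
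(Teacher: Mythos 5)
Your skeleton is the same as the paper's: reduce via Lemma~\ref{lem:charac}; in the strongly irreducible case take $\nu_1=\nu_2=\nu$ and show $\spt(\mu_F)=X_F$; in the triangular case pass to the two-step products and run a sign analysis on the diagonal ratios, getting $\delta_{e_1}$ on the expanding side and the stationary measure of a contracting-on-average affine system on the other. Your strongly irreducible argument (uniqueness of the stationary measure and minimality of its support, with attracting directions of proximal elements forced into every closed invariant set) is a legitimate variant of the paper's, which instead uses non-atomicity of $\mu_F$ together with the weak convergence $B_n\mu_F\to\delta_V$; both are standard Furstenberg theory and either works.

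The genuine gap is in the reducible case, in the sentence ``if one of the two regimes is absent, the corresponding part of $X_F$ is empty.'' That is false, because of parabolic elements. Take $A_1=\tfrac12\begin{pmatrix}1&1\\0&1\end{pmatrix}$ and $A_2=\begin{pmatrix}1/2&0\\0&1/3\end{pmatrix}$: every fully supported Bernoulli measure has simple Lyapunov spectrum, and in the chart the inverse pair maps have ratios $r_w\in\{1,\tfrac23,\tfrac23,\tfrac49\}$, so your expanding regime is empty; yet $(A_1^{-1})^n/\|(A_1^{-1})^n\|$ converges to a rank-one matrix with image $e_1$, so $e_1\in X_F$. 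Your construction then produces only $\nu_2$, and the desired equality $\spt(\mu_F^2)=X_F$ survives only because $e_1$ happens to lie in $\Lambda$: the fixed points of the compositions $(s\mapsto\tfrac49 s)\circ(s\mapsto s-2n)$ escape to infinity, so the attractor is unbounded in the chart and its closure in $\RP$ contains $e_1$. In other words, the ``careful treatment of the boundary parabolic products'' you deferred to the end is not a technicality: it falsifies your case division, and the repair (whenever a parabolic product exists, compose any chart contraction with its high powers to push fixed points to infinity, giving $e_1\in\Lambda$) must actually be supplied. Note how the paper's proof sidesteps this entirely: when $\mu_F^1$ is non-atomic it reruns the weak-convergence argument, which places \emph{every} $V\in X_F$ (parabolic-generated or not) in $\spt(\mu_F^1)$; and when $\mu_F^1=\delta_V$ is atomic, all chart maps fix the finite point $V$, which forces any equal-diagonal product to be scalar, so no parabolic elements exist and $X_F$ is exactly $\{V\}$ or $\{V,e_1\}$. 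Your other deferred items are harmless but also need writing out: odd words add nothing because the attracting fixed point of $g$ equals that of $g^2$, and the identification of $\spt(\mu_F^2)$ with the closure of the attracting fixed points of all contracting compositions (not just compositions of the contracting generators) is the standard support characterization for contracting-on-average systems.
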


\begin{proof}
  By Lemma \ref{lem:charac}, we may assume that the tuple $\mathsf{A}$ is either strongly irreducible or upper-triangular. Note that in the second case we have, to simplify notation, changed the base and replaced $\mathsf{A}$ by its second iterate, i.e.\ by the tuple consisting of all possible products $A_iA_j$.

  If $\mathsf{A}$ is strongly irreducible and $\mu_F$ is the Furstenberg measure with respect to a fully supported Bernoulli measure $\nu$ having simple Lyapunov spectrum, then we show that $\spt(\mu_F)=X_F$. Observe that, by \cite[Theorem~4.1]{BougerolLacroix1985}, $\mu_F$ is non-atomic. It suffices to show that $X_F \subset \spt(\mu_F)$, so let $V\in X_F$. Then there exists $A\in\overleftarrow{\mathscr{R}}(\mathsf{A})$ such that $A\R^2=V$. By definition of $\overleftarrow{\mathscr{R}}(\mathsf{A})$ there exists a sequence of matrices $B_n\in\mathcal{S}^{-1}(\mathsf{A})$ such that $B_n/\|B_n\|\to A$. Suppose for a contradiction that $V \notin \spt(\mu_F)$. Then, by the compactness of the support, there exists $\kappa>0$ such that $\mu_F(B(V,\kappa))=0$. Since $\mu_F$ is invariant and $\nu$ is fully supported, we have $B\mu_F(B(V,\kappa))=0$ for all $B\in\mathcal{S}^{-1}(\mathsf{A})$. But since the measure $\mu_F$ is non-atomic, \cite[Lemma~3.2]{BougerolLacroix1985} assures that $B_n\mu_F$ converges weakly to the Dirac mass $\delta_V$ at $V$, i.e.\ $B_n\mu_F(B(V,\kappa))\to1,$ which is a contradiction, so $V\in X_F$ implies $V\in \spt(\mu_F)$.

  Let us then assume that the tuple $\A=(A_1,\ldots,A_N)$ is upper-triangular. In this case, the inverse matrices are of the form
  \begin{equation*}
    A_i^{-1} =
    \begin{pmatrix}
      a_i & b_i \\
      0   & c_i
    \end{pmatrix}
  \end{equation*}
  for all $i \in \{1,\ldots,N\}$. There are three possible cases:
  \begin{enumerate}
    \item\label{it:I} There exist Bernoulli measures $\nu_1$ and $\nu_2$ such that
    \begin{equation*}
      \sum_{i=1}^N\nu_1([i])\log\biggl|\frac{a_i}{c_i}\biggr| < 0 < \sum_{i=1}^N\nu_2([i])\log\biggl|\frac{a_i}{c_i}\biggr|.
    \end{equation*}
    \item\label{it:II} For every Bernoulli measure $\nu_1$ it holds that
    \begin{equation*}
      \sum_{i=1}^N\nu_1([i])\log\biggl|\frac{a_i}{c_i}\biggr| \leq 0,
    \end{equation*}
    and there is at least one such Bernoulli measure for which the inequality holds strictly.
    \item\label{it:III} For every Bernoulli measure $\nu_2$ it holds that
    \begin{equation*}
      \sum_{i=1}^N\nu_2([i])\log\biggl|\frac{a_i}{c_i}\biggr| \geq 0.
    \end{equation*}
  \end{enumerate}
  It is easy to see that, in each of the cases, the Furstenberg measure $\mu_F^2$ with respect to $\nu_2$ is the Dirac mass $\delta_{e_1}$ at the $x$-coordinate axis $e_1$. Furthermore, the infinite series $u(\iii)=\sum_{n=1}^\infty \frac{b_{i_n}}{c_{i_n}}\prod_{k=1}^{n-1}\frac{a_{i_k}}{c_{i_k}}$ converges for $\nu_1$-almost every $\iii = i_1i_2\cdots \in \Sigma$, and
  \begin{equation*}
    A_{i_{1}}^{-1}(u(\sigma\iii),1) = c_{i_1}(u(\iii),1)
  \end{equation*}
  for $\nu_1$-almost every $\iii \in \Sigma$. Thus, the corresponding Furstenberg measure $\mu_F^1$ is the distribution of the subspaces $\linspan\{(u(\iii),1)\}$. We may assume that $\nu_1$ is fully supported.

  In the case \eqref{it:III}, we clearly have $X_F=\{e_1\}=\spt(\mu_F^2)$. In the cases \eqref{it:I} and \eqref{it:II}, if $\mu_F^1$ is non-atomic, then, by recalling that $\nu_1$ is fully supported, the argument in the case where $\mathsf{A}$ was strongly irreducible can be repeated. If $\mu_F^1$ is atomic, then, by using the invariance and uniqueness of $\mu_F^1$, we see that $\mu_F^1=\delta_V$ for some $e_1\neq V\in\RP$. Hence, in the case \eqref{it:II}, we have $X_F=\{V\}=\spt(\mu_F^1)$, and in the case \eqref{it:I}, we have $X_F=\{V,e_1\}=\spt(\mu_F^1)\cup\spt(\mu_F^2)$.
\end{proof}

\subsection{Self-affine set}
We consider a tuple $(A_1+v_1,\ldots,A_N+v_N)$ of contractive invertible affine self-maps on $\R^2$, where we have written $A+v$ to denote the affine map $x \mapsto Ax+v$ defined on $\R^2$ for all $2 \times 2$ matrices $A$ and translation vectors $v \in \R^2$. We also write $\fii_i = A_i+v_i$ for all $i \in \{1,\ldots,N\}$ and $\fii_\iii = \fii_{i_1} \circ \cdots \circ \fii_{i_n}$ for all $\iii = i_1 \cdots i_n \in \Sigma_n$ and $n \in \N$. Note that the associated tuple of matrices $(A_1,\ldots,A_N)$ is an element of $GL_2(\R)^N$ and satisfies $\max_{i \in \{1,\ldots,N\}}\|A_i\|<1$.

It is a classical result that there exists a unique non-empty compact set $X \subset \R^2$, called the \emph{self-affine set}, such that
\begin{equation*}
  X = \bigcup_{i=1}^N \fii_i(X).
\end{equation*}
The \emph{canonical projection} $\pi\colon \Sigma \to X$ is defined by $\pi\iii = \sum_{n=1}^\infty A_{\iii|_{n-1}} v_{i_n}$ for all $\iii = i_1i_2\cdots \in \Sigma$. It is easy to see that $\pi$ is continuous and $\pi\Sigma = X$. If $\nu$ is a Bernoulli measure, then its canonical projection $\pi\nu$ is the \emph{self-affine} measure on $X$. It is well known that a self-affine measure $\mu$ satisfies
\begin{equation} \label{eq:sa-invariant}
  \mu = \sum_{i=1}^N p_i\fii_i\mu,
\end{equation}
where $(p_1,\ldots,p_N)$ is the associated probability vector. By \cite[Theorem~2.4]{BaranyKaenmaki2017}, the local dimension of a self-affine measure $\mu$ exists for $\mu$-almost every point $x \in X$ and equals to $\dim(\mu)$, the upper/lower Hausdorff/packing dimension of $\mu$. Recall that $\dim(\mu) \le \dimh(X)$ for all self-affine measures $\mu$, where $\dimh(X)$ is the Hausdorff dimension of $X$.

We say that $X$ satisfies the \emph{strong separation condition} if $\fii_i(X) \cap \fii_j(X) = \emptyset$ whenever $i \ne j$. We also use convention that whenever we speak about a self-affine set $X$, then it is automatically accompanied with a tuple of affine maps which defines it. This makes it possible to write that e.g.\ ``$X$ is strongly irreducible'' which obviously then means that ``the corresponding tuple of matrices is strongly irreducible''.

\begin{proposition}\label{prop:app}
  Let $X$ be a dominated and strongly irreducible planar self-affine set satisfying the strong separation condition. Then for every $\eps>0$ there exists a fully supported $n$-step Bernoulli measure $\nu$ having simple Lyapunov spectrum such that
  \begin{equation*}
    \dim(\pi\nu) \ge \dimh(X)-\eps.
  \end{equation*}
\end{proposition}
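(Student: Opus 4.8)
The plan is to identify $\dimh(X)$ with the affinity dimension and then approximate the latter from below by the dimensions of a sequence of $n$-step Bernoulli measures manufactured from the singular value function. Write $\varphi^s$ for the singular value function of a matrix and recall that the affinity dimension $s_0 = \dimaff(X)$ is the value at which the subadditive pressure $P(s) = \lim_n \tfrac1n \log\sum_{\iii\in\Sigma_n}\varphi^s(A_\iii)$ vanishes. Since $X$ is dominated and strongly irreducible and obeys the strong separation condition, the planar self-affine theory yields $\dimh(X) = \dimaff(X) = s_0$ (see \cite{barany2017hausdorff, MorrisShmerkin2019}), so it suffices to produce, for each $\eps>0$, an admissible measure $\nu$ with $\dim(\pi\nu) \ge s_0 - \eps$.

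For each $n\in\N$ I would take the $n$-step Bernoulli measure $\nu_n$ determined by the block weights
\[
  p_\iii = \frac{\varphi^{s_0}(A_\iii)}{\sum_{\jjj\in\Sigma_n}\varphi^{s_0}(A_\jjj)}, \qquad \iii\in\Sigma_n .
\]
Because every $A_\iii$ is invertible, each weight is strictly positive, which forces $\nu_n$ to be fully supported; and because $\A$ is dominated, \eqref{eq:domination} guarantees that $\nu_n$ has simple Lyapunov spectrum. Thus both structural requirements hold automatically. It is convenient to regard $\pi\nu_n$ as the self-affine measure of the iterated system $\{\fii_\iii\}_{\iii\in\Sigma_n}$, whose matrix tuple $(A_\iii)_{\iii\in\Sigma_n}$ again is dominated and strongly irreducible (strong irreducibility passing to iterates because domination supplies a proximal element) and whose attractor is still $X$ and still satisfies the strong separation condition; hence every dimension result quoted below applies to $\nu_n$.

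The remaining task splits into two standard pieces. A thermodynamic computation---using that $\tfrac1n\log\sum_{\iii\in\Sigma_n}\varphi^{s_0}(A_\iii)\to P(s_0)=0$ together with the subadditivity of $\iii\mapsto\log\varphi^{s_0}(A_\iii)$---shows that the normalized entropy and the two Lyapunov exponents of $\nu_n$ converge to those of an equilibrium state for the $s_0$-potential, whence the Lyapunov dimension satisfies $\diml(\nu_n)\to s_0$. On the other hand, under domination and strong irreducibility the Furstenberg measure of $\nu_n$ is as spread out as the exponents permit, so the Ledrappier--Young formula for planar self-affine measures \cite{BaranyKaenmaki2017, barany2017hausdorff, Feng2019} upgrades the Lyapunov dimension to the true dimension, $\dim(\pi\nu_n)=\diml(\nu_n)$. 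Fixing $n$ so large that $\diml(\nu_n)\ge s_0-\eps$ then produces $\nu=\nu_n$ with all the asserted properties.

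The main obstacle is the last equality $\dim(\pi\nu_n)=\diml(\nu_n)$, rather than merely the general bound $\dim(\pi\nu_n)\le\diml(\nu_n)$. Securing the reverse inequality is exactly where strong irreducibility (preventing an invariant line that would collapse the dimension of the projected Furstenberg measure) and the strong separation condition (ruling out the overlaps that would otherwise drop the entropy) are indispensable; this is the delicate point where the cited planar dimension theorems do the genuine work, whereas the thermodynamic approximation $\diml(\nu_n)\to s_0$ and the verification of full support and simple spectrum are routine.
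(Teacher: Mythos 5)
Your proposal is correct and is essentially the paper's own argument: the paper likewise discretizes the Gibbs structure at the affinity dimension into fully supported $n$-step Bernoulli measures---it uses the cylinder masses $\mu([\iii])$ of the K\"aenm\"aki equilibrium measure, which by the quasi-Bernoulli property are uniformly comparable to your weights $\varphi^{s_0}(A_\iii)$---controls the entropy and Lyapunov exponents through the quasi-multiplicativity furnished by domination, and then invokes \cite[Theorem~1.2]{barany2017hausdorff} exactly as you do. The only corrections worth noting are minor: one must carry $\min\{2,\cdot\}$ throughout (so the reduction is to $\dim(\pi\nu)\ge\min\{2,s_0\}-\eps$, since $\dimaff(X)$ can exceed $2$), and strong irreducibility of the iterated tuple holds by an elementary argument (a finite set of lines invariant under all length-$n$ products yields, by taking its images under all shorter products, a finite invariant set for the original tuple), not because domination supplies a proximal element.
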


\begin{proof}
  The \emph{Lyapunov dimension} of $\mu \in \MM_\sigma(\Sigma)$ is
  \begin{equation*}
    \diml(\mu) = \min\biggl\{\frac{h(\mu)}{\chi_1(\mu)},1+\frac{h(\mu)-\chi_1(\mu)}{\chi_2(\mu)}\biggr\},
  \end{equation*}
  where
  \begin{equation*}
    h(\mu) = \inf_{n \in \N} -\tfrac{1}{n} \sum_{\iii \in \Sigma_n} \mu([\iii]) \log\mu([\iii])
  \end{equation*}
  is the \emph{entropy} of $\mu$. By \cite[Theorem 2.9]{BaranyKaenmakiMorris2018} and \cite[Theorem A]{KaenmakiReeve2014}, there exists unique measure $\mu \in \MM_\sigma(\Sigma)$ such that there is a constant $C \ge 1$ for which
  \begin{equation*}
    C^{-1} \min\{\|A_\iii\|^s, |\det(A_\iii)|^{s-1}\|A_\iii\|^{2-s}\} \le \mu([\iii]) \le C \min\{\|A_\iii\|^s, |\det(A_\iii)|^{s-1}\|A_\iii\|^{2-s}\}
  \end{equation*}
  for all $\iii \in \Sigma_*$, where $s = \diml(\mu)$. Recalling \cite[Theorem 1.1]{barany2017hausdorff}, we thus have
  \begin{equation*}
    \dimh(X) = \min\{2,\diml(\mu)\}.
  \end{equation*}
  Fix $\eps>0$ and choose $\eps'>0$ such that $|\chi_1(\mu)-\chi_1(\nu)|<\eps'$, $|\chi_2(\mu)-\chi_2(\nu)|<\eps'$, and $|h(\mu)-h(\nu)|<\eps'$ imply
  \begin{equation*}
    |\diml(\mu)-\diml(\nu)|<\eps.
  \end{equation*}
  Let $n \in \N$ be such that $\tfrac{1}{n}\log\kappa^{-1} < \eps'$,
  \begin{align*}
    h(\mu) &\le -\tfrac{1}{n}\sum_{\iii\in\Sigma_n}\mu([\iii])\log\mu([\iii]) < h(\mu) + \eps', \\
    \chi_1(\mu)-\eps' &< -\tfrac{1}{n}\int_\Sigma \log\|A_{i_1} \cdots A_{i_n}\| \dd\mu(\iii) \le \chi_1(\mu), \\
    \chi_2(\mu) &\le -\tfrac{1}{n}\int_\Sigma \log\|A_{i_1}^{-1} \cdots A_{i_n}^{-1}\|^{-1} \dd\mu(\iii) < \chi_2(\mu) + \eps',
  \end{align*}
  and let $\nu$ be the $n$-step Bernoulli measure obtained from the probability vector $(\mu([\iii]))_{\iii\in\Sigma_n}$. Note that $\nu$ is clearly fully supported and, by \eqref{eq:domination}, $\nu$ has simple Lyapunov spectrum. Recall that, by \cite[Corollary 2.4]{BaranyKaenmakiMorris2018}, there exists a constant $0<\kappa<1$ such that $\|A_\iii A_\jjj\| \ge \kappa\|A_\iii\| \|A_\jjj\|$ for all $\iii, \jjj \in \Sigma_*$. Thus, we have
  \begin{align*}
    \chi_1(\mu)-\eps' &\le -\tfrac{1}{n}\int_\Sigma \log\|A_{i_1} \cdots A_{i_n}\| \dd\mu(\iii) \\
    &\le \sup_{k \in \N} -\tfrac{1}{kn}\sum_{\iii_1,\ldots,\iii_k \in \Sigma_n} \mu([\iii_1]) \cdots \mu([\iii_k]) \log\|A_{\iii_1 \cdots \iii_k}\| = \chi_1(\nu) \\
    &\le \lim_{k \to \infty} -\tfrac{1}{kn}\sum_{\iii_1,\ldots,\iii_k \in \Sigma_n} \mu([\iii_1]) \cdots \mu([\iii_k]) \log(\kappa^{k-1} \|A_{\iii_1}\| \cdots \|A_{\iii_k}\|) \\
    &= -\tfrac{1}{n} \sum_{\iii \in \Sigma_n} \mu([\iii]) \log\|A_\iii\| + \tfrac{1}{n}\log\kappa^{-1} \le \chi_1(\mu) + \eps'
  \end{align*}
  and, similarly, $\chi_2(\mu)+\eps' \ge \chi_2(\nu) \ge \chi_2(\mu)-\eps'$ and $h(\mu)+\eps' \ge h(\nu) \ge h(\mu)$. Therefore, by \cite[Theorem 1.2]{barany2017hausdorff},
  \begin{equation*}
    \dim(\pi\nu) = \min\{2,\diml(\nu)\} \ge \min\{2,\diml(\mu)\} - \eps = \dimh(X) - \eps
  \end{equation*}
  as required.
\end{proof}

\subsection{Projections and tangent sets}
Let $V,W \in \RP$ be such that $V \ne W$. The projection $\proj_V^W\colon \R^2 \to V$ along the line $W$ onto the line $V$ is the rank $1$ matrix defined by $\Ker(\proj^W_{V})=W$ and $\proj^W_{V}|_V=I|_V$. It is easy to see that $\|\proj^W_{V}\|\geq1$ and $\|\proj^W_{V}\|=1$ if and only if $V^\perp=W$. We denote the normalised projection by $\nproj_V^W$, that is, $\nproj_V^W=\|\proj^W_{V}\|^{-1}\proj^W_{V}$. To simplify notation, we denote the orthogonal projection $\proj_V^{V^\perp}$ by $\proj_V$. We say that a self-affine set $X$ satisfies the \emph{projection condition} if there exists $n_0\in\N$ such that $\proj_{V^{\perp}}X$ is a non-trivial closed line segment for all $V \in \bigcup_{n\geq n_0}\bigcup_{\iii\in\Sigma_n}A_{\iii}^{-1}Y_F$. Note that the projection condition implies the assumption (2) in \cite[Theorem~3.1]{KaenmakiKoivusaloRossi2017}. Therefore, \cite[Remark 3.4]{KaenmakiKoivusaloRossi2017} guarantees that $X$ is not contained in a line if it satisfies the projection condition.

Recall that a sequence $(T_n)_{n\in\N}$ of closed subsets of $B(0,1)$ converges to $T$ in Hausdorff distance if
\begin{equation*}
  \lim_{i \to \infty} \sup_{x \in T_i} \dist(x,T) = 0
\quad\text{and}\quad
  \lim_{i \to \infty} \sup_{y \in T} \dist(y,T_i) = 0.
\end{equation*}

\begin{lemma} \label{lem:projcond}
  Let $X$ be a planar self-affine set satisfying the strong separation condition and the projection condition. Then $\proj_{V^\perp}X$ is a closed line segment for all $V\in X_F$. In particular, $\proj_{V^\perp}\pi[\iii]$ is a closed line segment for all $\iii\in\Sigma_*$ and $V\in X_F$.
\end{lemma}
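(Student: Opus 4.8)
The plan is to combine the projection condition with Lemma~\ref{lem:furstenberg} through a continuity argument, after recording a linear-algebra identity that also disposes of the ``in particular'' clause. For $V \in \RP$ and $\iii \in \Sigma_*$, writing $\fii_\iii(x) = A_\iii x + w_\iii$, the rank-one maps $\proj_{V^\perp}A_\iii$ and $\proj_{(A_\iii^{-1}V)^\perp}$ both have kernel $A_\iii^{-1}V$, so they differ only by post-composition with a linear isomorphism $T\colon (A_\iii^{-1}V)^\perp \to V^\perp$ of their image lines; hence
\begin{equation*}
  \proj_{V^\perp}\fii_\iii(X) = \proj_{V^\perp}(A_\iii X + w_\iii) = T\bigl(\proj_{(A_\iii^{-1}V)^\perp}X\bigr) + \proj_{V^\perp}w_\iii
\end{equation*}
is an affine image of $\proj_{(A_\iii^{-1}V)^\perp}X$. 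Since $\overleftarrow{\mathscr{R}}(\mathsf{A})$ is closed under left multiplication by each $A_i^{-1}$ (the set $\overline{\R\mathcal{S}^{-1}(\mathsf{A})}$ is a semigroup and left multiplication preserves rank), the set $X_F$ is invariant under each $A_i^{-1}$, whence $A_\iii^{-1}V \in X_F$ for all $\iii \in \Sigma_*$ and $V \in X_F$. Thus, once the first assertion is proved, $\proj_{(A_\iii^{-1}V)^\perp}X$ is a closed segment and so is its affine image $\proj_{V^\perp}\pi[\iii] = \proj_{V^\perp}\fii_\iii(X)$, which is exactly the ``in particular'' statement.

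For the first assertion I would set $G = \{ V \in \RP : \proj_{V^\perp}X \text{ is a closed line segment} \}$ and first show that $G$ is closed. Since $X$ is compact and $\proj_{V^\perp}$ depends continuously on $V$, the map $V \mapsto \proj_{V^\perp}X$ is continuous in the Hausdorff metric; moreover the family of closed line segments contained in a fixed ball is closed under Hausdorff limits, because a segment is the convex hull of its two extreme points and these depend continuously on the segment. Hence if $V_k \to V$ with $V_k \in G$, then $\proj_{V^\perp}X$ is a Hausdorff limit of closed segments and $V \in G$. The projection condition says precisely that $\bigcup_{n \ge n_0}\bigcup_{\iii \in \Sigma_n}A_\iii^{-1}Y_F \subseteq G$, so $G$ contains the closure of this set, and it remains to show that $X_F$ lies in that closure.

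By Lemma~\ref{lem:furstenberg} we already have $X_F \subseteq \overline{\bigcup_{\iii \in \Sigma_*}A_\iii^{-1}Y_F}$, so the only gap is to upgrade ``$\iii \in \Sigma_*$'' to ``$|\iii| \ge n_0$'', and this is the main obstacle. I would settle it by revisiting the mechanism of Lemma~\ref{lem:furstenberg}: given $V \in X_F$ choose $A \in \overleftarrow{\mathscr{R}}(\mathsf{A})$ with $A\R^2 = V$ and $B_n \in \mathcal{S}^{-1}(\mathsf{A})$ with $B_n/\|B_n\| \to A$, and write $B_n = A_{\jjj_n}^{-1}$. A normalised sequence of products from $\mathcal{S}^{-1}(\mathsf{A})$ can converge to a rank-one matrix only if the word length grows without bound, since for words of bounded length the ratio $|\det(A_\jjj^{-1})|/\|A_\jjj^{-1}\|^2$ stays bounded away from $0$; hence $|\jjj_n| \to \infty$. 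Choosing a direction $W' \in \bigcup_{\kkk}A_\kkk^{-1}Y_F$ that avoids the at most one exceptional direction of $(B_n)$ and writing $W' = A_\kkk^{-1}W$ with $W \in Y_F$, we get $B_n W' = A_{\kkk\jjj_n}^{-1}W \in A_{\kkk\jjj_n}^{-1}Y_F$ with $|\kkk\jjj_n| \to \infty$ and $B_n W' \to V$, so $V \in \overline{\bigcup_{n \ge n_0}\bigcup_{\iii \in \Sigma_n}A_\iii^{-1}Y_F} \subseteq G$. The delicate point is precisely this last step, namely arranging the approximating directions to be simultaneously based at a point of $Y_F$ and carried by words of length at least $n_0$; I expect this to be the crux, because the naive alternative of treating the finitely many short-word directions directly would require showing that a finite union of segments in a line is connected, which fails for general self-affine sets and so must be routed through the density statement above.
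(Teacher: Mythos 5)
Your proposal takes essentially the same route as the paper's proof: approximate $V\in X_F$ by directions $A_\iii^{-1}W$ with $W\in Y_F$ via Lemma~\ref{lem:furstenberg}, pass to the limit using continuity of $V\mapsto\proj_{V^\perp}X$ in the Hausdorff metric, and obtain the ``in particular'' clause by writing $\proj_{V^\perp}\fii_\iii(X)$ as an affine image of $\proj_{(A_\iii^{-1}V)^\perp}X$ together with the invariance of $X_F$ under the inverse matrices. On the word-length point you are in fact more careful than the paper itself: the paper's proof asserts that $\proj_{(A_{\jjj_n}^{-1}V_n)^\perp}X$ is a line segment ``for every $n\ge n_0$'', tacitly identifying the index $n$ with the length $|\jjj_n|$, whereas the projection condition only covers words of length at least $n_0$. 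Your repair is correct: since $B_n/\|B_n\|$ tends to a rank-one matrix, $|\det(B_n)|/\|B_n\|^2\to 0$, so the lengths cannot remain bounded, and prepending a fixed word $\kkk$ places the approximating directions in $A_{\kkk\jjj_n}^{-1}Y_F$ with $|\kkk\jjj_n|\ge n_0$ eventually. Do note, however, that choosing $W'$ off the exceptional direction requires $\bigcup_{\kkk\in\Sigma_*}A_\kkk^{-1}Y_F$ to contain more than that single point; this is exactly what the second paragraph of the proof of Lemma~\ref{lem:furstenberg} establishes (the singleton case leads to a contradiction), so you should invoke it explicitly rather than leave the choice implicit.

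The one genuine gap is degeneracy in your closedness argument for $G$. The family of closed line segments inside a fixed ball is \emph{not} closed under Hausdorff limits unless single points are admitted: segments whose lengths shrink to zero converge to a point, and your justification via extreme points is perfectly consistent with the two endpoints merging in the limit. As written, your argument therefore only shows that $\proj_{V^\perp}X$ is a segment \emph{or a singleton}. The paper closes this by recalling that the projection condition forces $X$ not to lie on any line (\cite[Remark 3.4]{KaenmakiKoivusaloRossi2017}); since $\proj_{V^\perp}X$ is a singleton if and only if $X$ is contained in a translate of the line $V$, this observation rules out the degenerate case, both for $X$ and (because $\fii_\iii$ is an invertible affine map) for every cylinder $\pi[\iii]$. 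With that one sentence added, your proof is complete.
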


\begin{proof}
  Fix $V\in X_F$. Recalling Lemma~\ref{lem:furstenberg}, let $(\jjj_n)_{n \in \N}$ be a sequence in $\Sigma_*$ and $(V_n)_{n \in \N}$ a sequence in $Y_F$ such that $A_{\jjj_n}^{-1}V_n \to V$. Since $\proj_{(A_{\jjj_n}^{-1}V_n)^{\perp}}X$ is a line segment for every $n \geq n_0$, $X$ is not contained in a line, and the mapping $V \mapsto \proj_{V^{\perp}}X$ is continuous in Hausdorff distance, we see that also $\proj_{V^\perp}X$ is a closed line segment. Finally, since $\pi[\iii]=\fii_{\iii}(X)$, the set $\proj_{V^\perp}\pi[\iii]$ is an affine image of $\proj_{(A_{\iii}^{-1}V)^\perp}X$, and since $X_F$ is invariant with respect to the inverse matrices, also the last claim holds.
\end{proof}

A complementary concept to projections is that of slices. The following lemma shows that all the slices of self-affine sets have zero measure.

\begin{lemma} \label{lem:zero_slice}
  Let $X$ be a planar self-affine set satisfying the strong separation condition such that it is not contained in a non-trivial closed line segment. If $\nu$ is a fully supported Bernoulli measure, then $\pi\nu(L + x) = 0$ for all $L\in\RP$ and $x\in X$.
\end{lemma}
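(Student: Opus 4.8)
The plan is to show that the $\pi\nu$-measure of any line $L+x$ is zero by reducing the problem to a statement about the self-affine structure, and then exploit the strong separation condition together with the fact that the matrices are genuinely two-dimensional. First I would observe that since $\nu$ is fully supported and Bernoulli, the measure $\mu = \pi\nu$ satisfies the self-affine invariance relation \eqref{eq:sa-invariant}, so that $\mu(L+x) = \sum_{i=1}^N p_i \fii_i\mu(L+x) = \sum_{i=1}^N p_i \mu(\fii_i^{-1}(L+x))$. The key point is that $\fii_i^{-1}$ maps a line to a line, so if we set $M(L+x) = \sup_{L'\in\RP,\,y\in\R^2}\mu(L'+y)$, the supremum of the $\mu$-measure over all lines, then the invariance relation gives $M \le \sum_i p_i M = M$, which forces the supremum to be attained along each map $\fii_i$ simultaneously whenever it is attained at all. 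This is the standard opening move for ruling out atomic behaviour on slices.

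Next I would argue that the supremum $M$ is actually attained. Since $X$ is compact and the family of lines intersecting $X$ is compact in the appropriate sense, and the function $(L',y)\mapsto\mu(L'+y)$ is upper semicontinuous (because $L'+y$ is closed and $\mu$ is a finite Borel measure), the supremum $M$ is achieved by some line $L_0 + x_0$ meeting $X$. I would then suppose for contradiction that $M>0$. From the invariance, if $L_0+x_0$ realises the maximum, then unwinding $\mu(L_0+x_0) = \sum_i p_i \mu(\fii_i^{-1}(L_0+x_0))$ and using $p_i>0$ together with $\mu(\fii_i^{-1}(L_0+x_0)) \le M$ forces $\mu(\fii_i^{-1}(L_0+x_0)) = M$ for every $i$. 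Iterating, every preimage $\fii_\iii^{-1}(L_0+x_0)$ is again a maximising line. The strong separation condition now enters: the $N$ lines $\fii_i^{-1}(L_0+x_0)$, after mapping back by each $\fii_i$, decompose the single line $L_0\cap X$ into the disjoint pieces $\fii_i(X)$, and because the matrices $A_i$ are not all simultaneously mapping $L_0$ to a parallel line in a degenerate way, these maximising lines have distinct directions $A_i^{-1}L_0$, which eventually spreads the mass of a single line across infinitely many distinct directions.

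The heart of the argument, and the step I expect to be the main obstacle, is converting ``maximal mass on a line persists under all inverse maps'' into a genuine contradiction. The natural route is to exploit that $X$ is not contained in a line together with the contraction. Because $\max_i\|A_i\|<1$, the line segments $\fii_\iii(L_0\cap X)$ shrink, so a fixed positive mass $M$ is concentrated on slices $\fii_\iii^{-1}(L_0+x_0)$ whose traces in $X$ have diameter tending to zero; the strong separation then packs arbitrarily many disjoint such maximal slices inside $X$, and since each carries mass $M$ while their union has finite total $\mu$-measure $1$, we obtain at most finitely many disjoint maximal slices, contradicting the persistence unless $M=0$. To make the disjointness precise I would use that $X$ is not a line segment, so the directions $A_\iii^{-1}L_0$ cannot all coincide; if they did, $X$ would be foliated by parallel maximal-mass lines in a single direction, which combined with the projection of $X$ being non-degenerate (the hypothesis that $X$ is not contained in a line) would be impossible. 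The delicate point requiring care is ensuring the slices one extracts are genuinely disjoint as subsets of $X$ rather than merely as abstract lines, and here the strong separation condition, which gives a positive gap $\delta = \min_{i\ne j}\dist(\fii_i(X),\fii_j(X))>0$, is exactly what guarantees that distinct cylinders yield disjoint contributions and hence bounds the number of maximal slices.
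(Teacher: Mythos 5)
Your opening moves are correct, and they are in fact a slightly cleaner variant of the paper's: where the paper runs a greedy maximization through \eqref{eq:L-max} to build a nested sequence of words whose slice masses are non-decreasing, you use upper semicontinuity of $(L',y)\mapsto\pi\nu(L'+y)$ on the compact family of lines meeting $X$ to produce a genuine maximizing line $L_0+x_0$, and the invariance \eqref{eq:sa-invariant} then correctly forces $\pi\nu(\fii_\iii^{-1}(L_0+x_0))=M$ for \emph{every} $\iii\in\Sigma_*$. The counting step is also essentially right (two distinct lines meet in at most one point, and under the strong separation condition $\pi\nu$ has no atoms since $\pi\nu(\{\pi\iii\})=\lim_n p_{\iii|_n}=0$), so there are at most $1/M$ distinct lines of mass $M$.

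The gap is in the step you yourself flag as the heart of the argument: the counting bound yields no contradiction, because the lines $\fii_\iii^{-1}(L_0+x_0)$ need not be distinct. All the counting gives is that the family $\mathcal{L}$ of maximal lines is \emph{finite}, with each member repeated infinitely often, and your mechanisms for excluding that situation do not work. First, the claim that the mass $M$ sits on slices ``whose traces in $X$ have diameter tending to zero'' confuses preimages with images: $\fii_\iii(L_0\cap X)$ shrinks but carries mass $p_\iii M$, whereas $X\cap\fii_\iii^{-1}(L_0+x_0)$ carries mass $M$ but does not shrink. Second, the assertion that the directions $A_\iii^{-1}L_0$ must be distinct (or cannot all coincide) is false in general: the lemma assumes no irreducibility, and for diagonal or triangular systems --- precisely the carpet-type examples this paper cares about --- all matrices can preserve a common direction, so every $A_\iii^{-1}L_0$ may equal $L_0$. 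Third, even granting several directions, finitely many positive-mass lines are perfectly consistent with $X$ not lying on a line, so no contradiction follows from the ``foliation'' remark. The paper's proof is built exactly to close this case: if one line $\ell\in\mathcal{L}$ recurs, pick $\kkk\in\Sigma$ with $\pi\kkk\notin\ell$ (possible since $X$ is not contained in a line); by compactness $\fii_{\kkk|_m}(X)\cap\ell=\emptyset$ for all large $m$, so in the decomposition of $\pi\nu(\ell)$ over generation-$m$ cylinders the $\kkk|_m$ term vanishes, giving $\pi\nu(\ell)\le(1-p_{\kkk|_m})M<M$, a contradiction. (Within your framework one can instead note that each $\fii_i^{-1}$ maps the finite set $\mathcal{L}$ injectively into itself, hence permutes it; thus every $\ell\in\mathcal{L}$ is invariant under some power $\fii_i^{n}$ and so contains the fixed point of $\fii_i$, and since the strong separation condition forces the $N\ge 2$ fixed points to be distinct, $\mathcal{L}$ is a single line invariant under every $\fii_i$, whence $X\subset\ell$ by uniqueness of the attractor --- again a contradiction.) Without an argument of this kind, your proof does not go through.
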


\begin{proof}
  Suppose, for a contradiction, that there exist a fully supported Bernoulli measure $\nu$, a line $L\in\RP$ and a point $x\in X$ such that $\pi\nu(L + x)>0$. Note that, by \eqref{eq:sa-invariant},
  \begin{equation} \label{eq:L-max}
     \pi\nu(\fii_\iii^{-1}(L+x)) = \sum_{\jjj\in\Sigma_n} p_{\jjj}\pi\nu(\fii_{\iii\jjj}^{-1}(L+x)) \le \max_{\jjj \in \Sigma_n} \pi\nu(\fii_{\iii\jjj}^{-1}(L+x))
  \end{equation}
  for all $\iii \in \Sigma_*$ and $n \in \N$. Here $p_\iii = p_{i_1} \cdots p_{i_n}$ for all $\iii = i_1 \cdots i_n \in \Sigma_n$. Therefore, we recursively find a sequence $(\iii_n)_{n\in\N}$ of words in $\Sigma_*$ such that $\iii_{n+1}|_n=\iii_n$ and
  \begin{equation} \label{eq:L-epsilon}
    0 < \pi\nu(L + x) \le \pi\nu(\varphi_{\iii_n}^{-1}(L + x))
  \end{equation}
  for all $n\in\N$.

  By the strong separation condition, $\pi\nu(\{\pi\iii\}) = \lim_{n \to \infty} \pi\nu(\fii_{\iii|_n}(X)) = \lim_{n \to \infty} p_{\iii|_n} = 0$ for all $\iii \in \Sigma$. Therefore, for each $n,m \in \N$ with $n \ne m$ exactly one of the two following conditions hold:
  \begin{enumerate}
    \item $\pi\nu(\varphi_{\iii_n}^{-1}(L + x) \cap \varphi_{\iii_m}^{-1}(L + x)) = 0$, \label{it:L1}
    \item $\varphi_{\iii_n}^{-1}(L + x) = \varphi_{\iii_m}^{-1}(L + x)$. \label{it:L2}
  \end{enumerate}
  Indeed, if \eqref{it:L1} does not hold, we have $A_{\iii_m}^{-1}L=A_{\iii_n}^{-1}L$, as otherwise the intersection is at most one point, and then, either $\varphi_{\iii_n}^{-1}(L + x) \cap \varphi_{\iii_m}^{-1}(L + x) = \emptyset$ or $\varphi_{\iii_n}^{-1}(L + x) = \varphi_{\iii_m}^{-1}(L + x)$. This observation together with \eqref{eq:L-epsilon} implies that the collection $\{\varphi_{\iii_n}^{-1}(L + x)\}_{n\in\N}$ is finite. Thus, there exists $n_0 \in \N$ such that $\varphi_{\iii_{n}}^{-1}(L + x) = \varphi_{\iii_{n_0}}^{-1}(L + x)$ for infinitely many $n\in\N$.

  Since $X$ is not contained in a line segment, there exists $\kkk\in\Sigma$ such that $\pi\kkk \notin \varphi_{\iii_{n_0}}^{-1}(L + x)$. By compactness, there exists $m_0\in\N$ such that $\varphi_{\kkk|_n}(X) \cap \varphi_{\iii_{n_0}}^{-1}(L + x) = \emptyset$ for all $n \geq m_0$. Fix $n \in \N$ such that $n - n_0 \ge m_0$ and $\varphi_{\iii_n}^{-1}(L + x) = \varphi_{\iii_{n_0}}^{-1}(L + x)$. With this choice, by \eqref{eq:L-epsilon} and \eqref{eq:L-max}, we have
  \begin{align*}
    0 &< \pi\nu(\varphi_{\iii_{n_0}}^{-1}(L + x)) = \sum_{\jjj \in \Sigma_{n-n_0} \setminus \{\kkk|_{n-n_0}\}} p_{\jjj} \pi\nu(\fii_{\iii|_{n_0}\jjj}^{-1}(L+x)) \\
    &\le (1-p_{\kkk|_{n-n_0}}) \pi\nu(\fii_{\iii_n}^{-1}(L+x)) < \pi\nu(\fii_{\iii_n}^{-1}(L+x)),
  \end{align*}
  which is a contradiction.
\end{proof}

Let $E \subset \R^2$ be compact. For each $x\in E$ and $r>0$ we define the \emph{magnification} $M_{x,r} \colon \R^2 \to \R^2$ by setting
\begin{equation*}
  M_{x,r}(z) = \frac{z-x}{r}
\end{equation*}
for all $z \in \R^2$. We say that $T$ is a \emph{tangent set} of $E$ at $x$ if there is a sequence $(r_n)_{n \in \N}$ of positive real numbers such that $\lim_{n \to \infty} r_n = 0$ and $M_{x,r_n}(E) \cap B(0,1) \to T$ in Hausdorff distance. We denote the collection of tangent sets of $E$ at $x$ by $\Tan(E,x)$. Furthermore, we say that $T$ is a \emph{weak tangent set} of $E$ if there exist sequences $(x_n)_{n\in\N}$ of points in $E$ and $(r_n)_{n \in \N}$ of positive real numbers such that $\lim_{n \to \infty} r_n = 0$ and $M_{x_n,r_n}(E) \cap B(0,1) \to T$ in Hausdorff distance. We denote the collection of weak tangent sets of $E$ by $\Tan(E)$. Note that $\bigcup_{x\in E}\Tan(E,x)\subset\Tan(E)$ where the inclusion can be strict.

A set $E \subset \R^2$ (or $E \subset \R$) is \emph{porous} if there exists $0<\alpha \le 1$ such that for every $x \in E$ and $0<r<\diam(E)$ there is a point $y \in E$ for which $B(y,\alpha r) \subset B(x,r) \setminus E$. We say that a set $E \subset \R^2$ is a \emph{comb} if there is a closed porous set $C \subset \R$ such that
\begin{equation*}
  E = (\R \times C) \cap B(0,1) \quad\text{or}\quad E = (\ell \times \{0\}) \cap B(0,1),
\end{equation*}
where $\ell$ is an interval containing at least one of the intervals $[-1,0]$ and $[0,1]$. Note that a comb is nowhere dense and of zero Lebesgue measure.

We observe that the following result of K\"aenm\"aki, Koivusalo, and Rossi \cite[Theorem~3.1]{KaenmakiKoivusaloRossi2017} is applicable in our setting.

\begin{theorem} \label{thm:KKR}
  Let $X$ be a planar self-affine set satisfying the strong separation condition and the projection condition. If $\nu$ is a Bernoulli measure having simple Lyapunov spectrum, then for $\nu$-almost every $\iii \in \Sigma$ and for each $T \in \Tan(X,\pi\iii)$ there exists a comb $C$ such that
  \begin{equation*}
    O_\iii T = C,
  \end{equation*}
  where $O_\iii$ is the rotation that takes $\vartheta_1(\iii)$ to the $x$-axis.
\end{theorem}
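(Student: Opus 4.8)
The plan is to analyze the magnifications $M_{\pi\iii,r}(X)\cap B(0,1)$ by exploiting the separation of scales forced by the simple Lyapunov spectrum, and to show that, after applying $O_\iii$, a magnification is trapped between a union of nearly horizontal ``teeth'' and a vanishing thickening of it. Fix a $\nu$-generic $\iii$; by Oseledets' theorem and the ergodicity of $\nu\times\mu_F$ under the skew-product $T$, I may assume that $\vartheta_1(\iii|_n)\to\vartheta_1(\iii)$ and $\vartheta_2(\iii|_n)\to\vartheta_2(\iii)$, that $\vartheta_2(\iii)\in\spt(\mu_F)\subset X_F$, and that $\tfrac1n\log\|A_{\iii|_n}\|\to-\chi_1(\nu)$ while $\tfrac1n\log(|\det A_{\iii|_n}|\,\|A_{\iii|_n}\|^{-1})\to-\chi_2(\nu)$ with $\chi_1(\nu)<\chi_2(\nu)$. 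Writing $\alpha_n=\|A_{\iii|_n}\|$ and $\beta_n=|\det A_{\iii|_n}|\,\alpha_n^{-1}$ for the semi-axes of the cylinder $\fii_{\iii|_n}(X)$, the eccentricity $\beta_n/\alpha_n$ tends to $0$: the cylinders are long and thin, with long axis converging to $\vartheta_1(\iii)$.

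First I would treat a single cylinder. Given a magnification scale $r=r_k\to0$ realising a prescribed $T\in\Tan(X,\pi\iii)$, choose $n=n_k$ with $\alpha_n\asymp r$. Using the singular value decomposition $A_{\iii|_n}=\alpha_n\tilde u_1 u_1^{T}+\beta_n\tilde u_2 u_2^{T}$, the rescaled cylinder $A_{\iii|_n}X/\alpha_n$ equals $\tilde u_1(u_1^{T}X)+(\beta_n/\alpha_n)\,\tilde u_2(u_2^{T}X)$, which converges in Hausdorff distance, as $\beta_n/\alpha_n\to0$, to the orthogonal projection of $X$ onto the line $\R u_1$ laid along the direction $\vartheta_1(\iii|_n)$. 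Since the singular directions are orthogonal in the plane, $u_1=u_2^{\perp}$ and $u_2=\vartheta_2(\iii|_n)$, so $\R u_1\to\vartheta_2(\iii)^{\perp}$; as $\vartheta_2(\iii)\in X_F$, Lemma~\ref{lem:projcond} guarantees that $\proj_{\vartheta_2(\iii)^{\perp}}X$ is a genuine closed line segment. Hence, after applying $O_\iii$, each magnified cylinder flattens onto a full horizontal segment, a single tooth. This is the step where both hypotheses are essential: the simple Lyapunov spectrum produces the flattening, and the projection condition makes the teeth full rather than porous.

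Next I would assemble the teeth. The magnified set is the union of the rescaled level-$n_k$ cylinders meeting $B(0,1)$; by the previous step each contributes a horizontal segment at a height equal to the rescaled transverse coordinate (along $\vartheta_1(\iii)^{\perp}$) of its base point. Let $P\subset\R$ be the limiting set of these heights. Because the transverse direction $\vartheta_1(\iii)^{\perp}$ is not a Furstenberg direction, the projection condition does not apply here; instead the strong separation condition produces definite gaps between sibling cylinders, and the self-affine recursion reproduces comparable gaps at every scale, so I would argue that $P$ is porous, with Lemma~\ref{lem:zero_slice} ruling out that the teeth accumulate into a set of positive length. Establishing that these gaps persist uniformly across all scales along the generic orbit, which is exactly where the separation $\chi_1(\nu)<\chi_2(\nu)$ decouples the horizontal and vertical structure, is the main obstacle; it is handled by the Birkhoff--Oseledets genericity fixed at the outset together with the full support of $\nu$, which forces recurrence to cylinders carrying a gap at the comparable scale.

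Finally I would verify the two-sided Hausdorff convergence. Every limit point of the magnifications lies within $o(1)$ of some tooth by the flattening estimate of the second paragraph, giving $O_\iii T\subset(\R\times P)\cap B(0,1)$; conversely, the fullness of each tooth together with the density of the realised heights in $P$ shows that every point of $(\R\times P)\cap B(0,1)$ is approximated by points of the magnified set. Passing to a convergent subsequence of scales and of the height configurations then identifies $O_\iii T$ with $(\R\times P)\cap B(0,1)$, or with a single segment $(\ell\times\{0\})\cap B(0,1)$ in the degenerate case where only the cylinder through $\pi\iii$ survives in $B(0,1)$. In either case $O_\iii T$ is a comb, as claimed.
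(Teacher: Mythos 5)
You should know at the outset that the paper does not prove Theorem \ref{thm:KKR} at all: it is imported wholesale from K\"aenm\"aki--Koivusalo--Rossi \cite[Theorem~3.1]{KaenmakiKoivusaloRossi2017}, the only work done in the paper being the observation that the projection condition implies hypothesis (2) of that theorem. Your sketch is therefore a from-scratch reconstruction, and its global picture --- cylinders flatten along $\vartheta_1(\iii)$, the teeth are projections of $X$ which the projection condition turns into segments, the heights form a porous set --- is indeed the right one. But one of your key identifications is false. You claim that the right singular direction satisfies $u_2=\vartheta_2(\iii|_n)$, hence $\R u_1\to\vartheta_2(\iii)^{\perp}$. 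By definition, $\vartheta_2(\iii|_n)$ is the long-axis image direction of $A_{i_1}^{-1}\cdots A_{i_n}^{-1}$, whereas the most contracted direction $u_2$ of $A_{\iii|_n}$ is the long-axis image direction of $(A_{\iii|_n})^{-1}=A_{i_n}^{-1}\cdots A_{i_1}^{-1}$, the \emph{reversed-order} product; these agree for $n=1$ but not in general. Worse, the reversed-order directions do \emph{not} converge $\nu$-almost surely: passing from $n$ to $n+1$ multiplies the product by a fresh matrix on the left, so these directions follow an orbit of the projective random walk and (for instance in the strongly irreducible case, such as Example \ref{example}) equidistribute with respect to a Furstenberg measure rather than converge. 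What is true is that they \emph{accumulate} on $X_F$: since $\beta_n/\alpha_n\to0$, the matrix $(A_{\iii|_n})^{-1}/\|(A_{\iii|_n})^{-1}\|$ is within $o(1)$ of a rank-one matrix, and any limit of these lies in $\overleftarrow{\mathscr{R}}(\mathsf{A})$, so its image lies in $X_F$. Because Lemma \ref{lem:projcond} covers every $V\in X_F$, your conclusion that the teeth flatten onto genuine segments survives, but only via this semigroup argument, not via Oseledets applied to $\vartheta_2$.

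The assembly and porosity steps have more serious gaps. Your stopping rule $\alpha_{n_k}\asymp r_k$ is the wrong one: the resulting teeth have length $\asymp 1$ but need not be full chords of $B(0,1)$, which the comb structure $(\R\times C)\cap B(0,1)$ requires, and in a non-homogeneous system the ``level-$n_k$ cylinders'' meeting the ball are neither of comparable size nor demonstrably near-parallel. The correct device (the one used in the paper's own Proposition \ref{prop:maintech} and Theorem \ref{thm:assouadv2}) is the enclosing cylinder: by strong separation, $X\cap B(\pi\iii,r)\subset\fii_{\iii|_m}(X)$ once $r\lesssim\|A_{\iii|_m}^{-1}\|^{-1}$, i.e.\ one stops by \emph{width}, and then the common prefix $A_{\iii|_m}$, whose eccentricity blows up, flattens all sub-cylinders onto directions within $o(1)$ of $\vartheta_1(\iii|_m)$ and makes each one cross the ball completely. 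Finally, the porosity of the height set $P$ does not follow from ``gaps between sibling cylinders'': two disjoint sub-cylinders separated along the tooth direction can have equal or interleaving heights, so planar separation gives no transverse separation whatsoever. Nor can Lemma \ref{lem:zero_slice} do the job you assign to it: it is a statement about the measure $\pi\nu$ of lines and carries no metric content --- the heights could a priori be dense in an interval while every slice still has zero measure. In the limit, $P$ is an approximate slice of $X$ in a direction accumulating on $X_F$ (this is exactly why Theorem \ref{thm:assouad} yields $1+\sup$ of slice dimensions), and its porosity requires converting the strong-separation moat $\dist(\fii_\jjj(X),X\setminus\fii_\jjj(X))\gtrsim\|A_\jjj^{-1}\|^{-1}$ into a gap \emph{along the slicing line}, which in turn needs uniform transversality between that line and the long axes of the cylinders it crosses --- a genericity input of the kind Lemmas \ref{lem:visit} and \ref{lem:omega} supply. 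This conversion is the heart of the cited theorem, and it is missing from your sketch.
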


\subsection{Assouad dimension}

If $(Y,d)$ is a metric space, then the \emph{Assouad dimension} of a set $E \subset Y$, denoted by $\dima(E)$, is the infimum of all $s$ satisfying the following: There exists a constant $C \ge 1$ such that each set $E \cap B(x,R)$ can be covered by at most $C(R/r)^s$ balls of radius $r$ centered at $E$ for all $0<r<R$. It is easy to see that $\dimh(E) \le \dima(E)$ for all sets $E \subset Y$. For other basic properties of the Assouad dimension, see \cite{Fraser2014,Luukkainen1998}.

If $E \subset \R^2$ is compact, then it is straightforward to see that $\dimh(T) \le \dima(E)$ for all $T \in \Tan(E)$; see \cite[Proposition 6.1.5]{MackayTyson}. The following result of K\"aenm\"aki, Ojala, and Rossi \cite[Proposition 5.7]{KOR} shows that there exists a weak tangent set which attains the maximal possible value. The result introduces a way to calculate the Assouad dimension of a set by considering its weak tangents.

\begin{proposition} \label{thm:KOR}
  If $E \subset \R^2$ is compact, then $\dima(E) = \max\{\dimh(T) : T \in \Tan(E)\}$.
\end{proposition}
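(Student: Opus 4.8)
The statement asserts that $\dima(E) = \max\{\dimh(T) : T \in \Tan(E)\}$ for compact $E \subset \R^2$, and the plan is to prove two inequalities. The inequality $\dima(E) \ge \dimh(T)$ for every $T \in \Tan(E)$ is the easy direction, already noted in the excerpt via \cite[Proposition 6.1.5]{MackayTyson}: the key point is that the Assouad dimension is invariant under the similarity maps $M_{x_n,r_n}$ and does not increase under passing to Hausdorff limits of sets (it is lower semicontinuous in the appropriate sense), so any weak tangent $T$ inherits $\dima(T) \le \dima(E)$, and then $\dimh(T) \le \dima(T) \le \dima(E)$. Taking the supremum over $T$ gives one half, and once the other half is established the supremum will be attained, justifying the maximum.

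The substantive direction is to produce a single weak tangent set $T$ with $\dimh(T) \ge \dima(E)$. First I would unwind the definition of Assouad dimension: if $s < \dima(E)$, then for every constant $C$ the covering bound fails, so there exist scales $0 < r < R$ and a center $x \in E$ such that $E \cap B(x,R)$ requires more than $C(R/r)^s$ balls of radius $r$ to cover. The plan is to choose $C = C_n \to \infty$ (say $C_n = n$) and extract from the resulting failures sequences of centers $x_n \in E$ and scale ratios $R_n/r_n$, rescale by $M_{x_n, R_n}$ so that the relevant ball becomes the unit ball, and pass to a Hausdorff-convergent subsequence; compactness of the space of closed subsets of $B(0,1)$ under the Hausdorff metric (Blaschke selection) guarantees a limit $T \in \Tan(E)$.

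The heart of the argument, and the step I expect to be the main obstacle, is to transfer the \emph{quantitative} covering lower bound surviving in the limit into a genuine lower bound on $\dimh(T)$. The rescaled sets $M_{x_n,R_n}(E) \cap B(0,1)$ carry covering numbers at scale $r_n/R_n$ that grow faster than $(R_n/r_n)^s$; one must argue that this persists for the limit $T$ uniformly across \emph{all} small scales, not merely the single scale that triggered the failure. The clean way to achieve this is to arrange, through a pigeonholing or iteration over dyadic scales, that the surviving tangent satisfies a covering lower bound $N_\rho(T \cap B(z,\lambda)) \gtrsim (\lambda/\rho)^s$ at every center $z \in T$ and all $0 < \rho < \lambda$; such a uniform lower bound forces an Ahlfors-regularity-type or at least a uniform mass-distribution estimate, from which $\dimh(T) \ge s$ follows by a standard Frostman/mass distribution argument. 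Since $s < \dima(E)$ was arbitrary, letting $s \uparrow \dima(E)$ yields a sequence of tangents with Hausdorff dimension approaching $\dima(E)$; a final diagonalization (again using Blaschke selection) produces one tangent realizing the supremum, completing the proof and upgrading the supremum to a maximum. The delicate accounting that keeps the covering lower bound uniform in scale through the limiting procedure is where the real work lies.
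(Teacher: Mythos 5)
The paper offers no proof of this proposition at all: it is quoted directly from K\"aenm\"aki, Ojala, and Rossi \cite[Proposition~5.7]{KOR}, so your argument must stand on its own, and it does not. Your skeleton for the hard direction (witnesses to the failure of the covering bound with $C_n\to\infty$, rescaling, Blaschke selection) is the right frame, but the step you yourself flag as ``where the real work lies'' is the entire content of the cited result, and no elementary ``pigeonholing or iteration over dyadic scales'' supplies it. A single witnessing pair of scales gives no control across scales: $E\cap B(x_n,R_n)$ can need $C_n(R_n/r_n)^s$ balls of radius $r_n$ with all of the branching concentrated in the top few dyadic generations and total degeneracy below, and nothing in your extraction rules this out; for such configurations the naive limit is a low-dimensional set. (Even persistence of the one-scale lower bound in the limit is unclear, since the witness scale $r_n/R_n$ may shrink much faster than the Hausdorff distance from $M_{x_n,R_n}(E)\cap B(0,1)$ to $T$.) The genuine proofs (Furstenberg's microset/gallery theorem, \cite[Proposition~5.7]{KOR}, Fraser's book) instead construct a \emph{measure} on a weak tangent satisfying a Frostman condition, and they need an ingredient entirely absent from your plan: the uniform \emph{upper} covering bound $N_r(E\cap B(x,R))\le C(R/r)^t$ for $t>\dima(E)$, valid with a single constant at all scales and inherited with the same constant by every rescaled copy, which is played against the lower bounds via a subadditivity (Fekete-type) argument or a weak-$\ast$ limit of carefully built measures.

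There is also an outright error: your closing step, that ``a final diagonalization (again using Blaschke selection) produces one tangent realizing the supremum,'' fails because Hausdorff dimension is not lower semicontinuous under Hausdorff convergence --- $[0,1/n]\to\{0\}$, and every compact set is a Hausdorff limit of finite sets. A Hausdorff limit of tangents $T_n$ with $\dimh(T_n)\uparrow\dima(E)$ is indeed again a weak tangent, but you know nothing about its dimension. The maximum in the statement is attained in the real proofs because the Frostman measures on the tangents carry constants uniform as $s\uparrow\dima(E)$, so the measures, not merely the sets, pass to the limit. The same confusion appears, more mildly, in your easy direction: Assouad dimension can certainly increase under Hausdorff limits (finite nets of $[0,1]$ have Assouad dimension $0$), so ``does not increase under passing to Hausdorff limits'' is false as stated; what \cite[Proposition~6.1.5]{MackayTyson} actually exploits is that the covering constant of $E$, for any exponent $t>\dima(E)$, is inherited uniformly by all rescaled copies and survives the limit. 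Since you delegate that direction to the citation, this last point is only a misstatement; the two issues above are genuine gaps.
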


If $E \subset \R^2$, $(Y,d)$ is a metric space, and $\eta \colon [0,\infty) \to [0,\infty)$ is a homeomorphism, then a homeomorphism $f \colon E \to Y$ is \emph{$\eta$-quasisymmetric} if
\begin{equation*}
  \frac{d(f(x),f(y))}{d(f(x),f(z))} \le \eta\biggl( \frac{|x-y|}{|x-z|} \biggr)
\end{equation*}
for all $x,y,z \in E$ with $x \ne z$. Quasisymmetric mappings, introduced in \cite{BeurlingAhlfors1956,TukiaVaisala1980}, are a non-trivial generalization of bi-Lipschitz mappings. The \emph{conformal Assouad dimension} of $E$ is
\begin{equation*}
  \cdima(E) = \inf\{ \dima(E') : E' \text{ is a quasisymmetric image of } E \}.
\end{equation*}
It is worth emphasizing that the codomains of the quasisymmetric mappings used in the definition can be any metric spaces. The conformal Assouad dimension is bounded above by the Assouad dimension. A set $E \subset \R^2$ is \emph{minimal} for the conformal Assouad dimension if $\dima(E) = \cdima(E)$. We remark that conformal dimension and minimality can similarly be defined also for other set dimensions. The original definition is for the Hausdorff dimension and it was introduced in \cite{Pansu1989}. The conformal dimension measures the size of the best shape of $E$. Calculating the conformal dimension and characterizing minimality are challenging problems; for example, see \cite{BateOrponen2018,BonkMerenkov2013,Hakobyan2010,KOR,Mackay}.

\section{Main results} \label{sec:main-results}

Our first main theorem shows that generic points of a planar self-affine set share the tangent sets. More precisely, we will show that generic points share the projections of tangent sets in the direction of the ``comb teeth''. Relying on K\"aenm\"aki, Koivusalo, and Rossi \cite{KaenmakiKoivusaloRossi2017}, this characterizes uniquely the comb set except in the case, when this projection is a single point. Unfortunately, our method does not provide any information on the length of $\ell$ in the case $(\ell\times\{0\})\cap B(0,1)$, where $\ell$ is an interval containing at least one of the intervals $[-1, 0]$ and $[0, 1]$. Therefore, we identify all the combs of the form  $(\ell\times\{0\})\cap B(0,1)$.

\begin{theorem} \label{thm:maininformal}
	Let $X$ be a planar self-affine set satisfying the strong separation condition and the projection condition. If $\nu$ is a Bernoulli measure having simple Lyapunov spectrum, then there exists a collection $\CC$ of combs such that
	\begin{equation*}
		\{O_{\iii}T : T\in\Tan(X,\pi\iii)\} = \CC
	\end{equation*}
	for $\nu$-almost all $\iii \in \Sigma$, where $O_\iii$ is the rotation that takes $\vartheta_1(\iii)$ to the $x$-axis.
\end{theorem}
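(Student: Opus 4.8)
The plan is to show that the map $\iii\mapsto F(\iii):=\{O_\iii T : T\in\Tan(X,\pi\iii)\}$ is $\sigma$-invariant for $\nu$-almost every $\iii$ and then to invoke the ergodicity of the Bernoulli measure $\nu$. By Theorem~\ref{thm:KKR}, for $\nu$-almost every $\iii$ the collection $F(\iii)$ consists of combs, so the real task is to understand how $F(\iii)$ and $F(\sigma\iii)$ are related. To sidestep the truncation to $B(0,1)$, I would work throughout with \emph{global} tangent sets, i.e.\ Hausdorff limits of $M_{\pi\iii,r_n}(X)$ on every ball $B(0,R)$ (extracted by a diagonal argument); the truncated collection appearing in the statement is then recovered by intersecting with $B(0,1)$, so almost sure constancy of the global collection yields the theorem.

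First I would record the renormalisation identity coming from the strong separation condition. Since $\fii_{i_1}(X)$ lies at positive distance from the other first--level pieces, for every $R$ and every sufficiently small $r$ one has $X\cap B(\pi\iii,Rr)=\fii_{i_1}(X)\cap B(\pi\iii,Rr)$, and the computation $M_{\pi\iii,r}\circ\fii_{i_1}=A_{i_1}\circ M_{\pi\sigma\iii,r}$ gives $M_{\pi\iii,r}(X)=A_{i_1}M_{\pi\sigma\iii,r}(X)$ on $B(0,R)$. Consequently $T\mapsto A_{i_1}T$ is a bijection between the global tangent sets at $\pi\sigma\iii$ and those at $\pi\iii$. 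The key linear-algebraic input is the equivariance $\vartheta_1(\iii)=A_{i_1}\vartheta_1(\sigma\iii)$, which holds $\nu$-almost everywhere by Oseledets' theorem (this is precisely where the simple Lyapunov spectrum is used): it forces the conjugated matrix $\tilde A:=O_\iii A_{i_1}O_{\sigma\iii}^{-1}$ to fix the $x$-axis, hence to be upper triangular, $\tilde A=\left(\begin{smallmatrix} a & b\\ 0 & d\end{smallmatrix}\right)$. A direct computation then shows that $\tilde A$ maps a comb $\R\times C'$ to the comb $\R\times dC'$, whence $F(\iii)=\{\,\R\times d(\iii)C' : \R\times C'\in F(\sigma\iii)\,\}$.

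It remains to absorb the vertical scaling factor $d(\iii)$. Here I would use that a global tangent collection is invariant under isotropic dilations: if $T=\lim_n M_{\pi\iii,r_n}(X)$, then $\lambda T=\lim_n M_{\pi\iii,r_n/\lambda}(X)\in\Tan(X,\pi\iii)$ for every $\lambda>0$, with no truncation obstruction since $r_n/\lambda\to0$. As dilating a comb $\R\times C$ by $\lambda$ produces $\R\times\lambda C$, the collection $\{C:\R\times C\in F(\sigma\iii)\}$ of teeth is closed under multiplication by every positive scalar. The relation of the previous paragraph then collapses to $F(\iii)=F(\sigma\iii)$ for $\nu$-almost every $\iii$, and the ergodicity of $\nu$ forces $F$ to be almost surely equal to a constant collection; intersecting its members with $B(0,1)$ produces the collection $\CC$ of combs in the statement.

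The main obstacle is the set-up that makes the scaling argument legitimate: the truncated tangents of the definition are genuinely \emph{not} dilation invariant, so one must pass to global tangents, verify that the comb description of Theorem~\ref{thm:KKR} persists on every ball (equivalently, that the teeth form a globally porous subset of $\R$), and check that both the renormalisation identity and the bijection $T\mapsto A_{i_1}T$ survive in the limit on all of $\R^2$. A secondary point requiring care is the measurability of $\iii\mapsto F(\iii)$ as a map into a suitable space of collections of closed sets, which is needed before ergodicity can be applied, together with the routine but necessary check that the $\nu$-null sets coming from Theorem~\ref{thm:KKR}, from Oseledets' theorem, and from the equivariance of $\vartheta_1$ may all be discarded at once.
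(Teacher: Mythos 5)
Your proposal is essentially correct, but it is a genuinely different proof from the paper's, so let me compare the two before pointing out one unflagged gap. The paper never compares $\Tan(X,\pi\iii)$ with $\Tan(X,\pi\sigma\iii)$; it compares tangent collections at two \emph{independent} generic points $\pi\iii,\pi\jjj$. A Poincar\'e recurrence argument for the skew product $(\jjj,L)\mapsto(\sigma\jjj,A_{\jjj|_1}^{-1}L)$ (Lemmas \ref{lem:visit}--\ref{lem:omega}) produces times $n_k$ at which the renormalised cylinder maps $M_{\pi\iii,r_k}\circ\fii_{\jjj|_{n_k}}^{-1}\circ M_{\pi\jjj,s_k}^{-1}$ converge to the rank-one map $\pm\nproj_F^{\vartheta_1(\jjj)}$ (Proposition \ref{prop:maintech}); hence a projection of a tangent at $\pi\jjj$ is a slice of a tangent at $\pi\iii$ (Proposition \ref{thm:main-tech}), and the comb structure lets one reconstruct a whole tangent from a single slice (Proposition \ref{prop:rotatetangent}). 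Your route replaces all of this by the one-step cocycle relation $F(\iii)=\tilde A(\iii)F(\sigma\iii)$, the Oseledets equivariance $\vartheta_1(\iii)=A_{i_1}\vartheta_1(\sigma\iii)$, dilation invariance of global tangent collections, and ergodicity; the only external input is Theorem \ref{thm:KKR}. This is shorter and cleaner for Theorem \ref{thm:maininformal}, but note two trade-offs. First, the paper's heavier machinery is reused: Propositions \ref{prop:maintech}, \ref{thm:main-tech} and their variant Proposition \ref{prop:balls} are exactly what later relates tangents to slices $X\cap(F+x)$ of $X$ itself in the proof of Theorem \ref{thm:assouad}, which your argument cannot provide. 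Second, the paper's conclusion is pointwise on a full-measure set (equality for \emph{all} pairs $\iii,\jjj\in\Omega$), so it needs no measurability of $\iii\mapsto F(\iii)$, whereas your appeal to ergodicity does. The truncation and global-comb issues you flag are real, but they are bookkeeping and are glossed over at the same level in the paper's own argument (for instance in the step $T_\iii=M_{0,\sin(\alpha/2)}(T'_\iii)\cap B(0,1)$ of Proposition \ref{prop:rotatetangent}).

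The gap you did not flag is the sign of $d(\iii)$, the lower-right entry of $\tilde A(\iii)=O_\iii A_{i_1}O_{\sigma\iii}^{-1}$. Nothing forces $d(\iii)>0$, and when $d(\iii)<0$ one has $\R\times d(\iii)C'=-\bigl(\R\times|d(\iii)|C'\bigr)$, i.e.\ the antipodal image of a dilated comb; closure of the teeth under \emph{positive} scalars then only yields $F(\iii)=-F(\sigma\iii)$. Tangent collections are not obviously invariant under $z\mapsto -z$ (this is a rotation by $\pi$, not a dilation, and combs are symmetric under $(x,y)\mapsto(-x,y)$ but not under $(x,y)\mapsto(x,-y)$), so the relation does not collapse to $F(\iii)=F(\sigma\iii)$, and the orientation cocycle $\sgn d(\iii)$ need not be a coboundary. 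The repair is cheap but must be made: the rotation $O_\iii$ taking the \emph{line} $\vartheta_1(\iii)\in\RP$ to the $x$-axis is only determined up to composition with $-I$, so run the ergodicity argument on the unordered pair $\{F(\iii),-F(\iii)\}$, which your computation shows to be $\sigma$-invariant for either sign of $d(\iii)$. It is then almost everywhere equal to a fixed pair $\{\CC,-\CC\}$, and for each generic $\iii$ one chooses between $O_\iii$ and $-O_\iii$ so that $\{O_\iii T : T\in\Tan(X,\pi\iii)\}=\CC$. This is the same orientation issue that the paper meets as the dichotomy $\pm\nproj_F^{\vartheta_1(\jjj)}$ and absorbs via the reflection $P$ in the proof of Proposition \ref{prop:rotatetangent}.
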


The theorem improves the results of K\"aenm\"aki, Koivusalo, and Rossi \cite[Theorem 3.1]{KaenmakiKoivusaloRossi2017} (see Theorem \ref{thm:KKR}) and Bandt and K\"aenm\"aki \cite[Theorems 1 and 2]{BandtKaenmaki2013}. The proof of the result can be found in Section \ref{sec:tangent-sets}.

Our second main result determines the Assouad dimension of a planar self-affine set and shows that the set is minimal for the conformal Assouad dimension. The theorem is proved in Section \ref{sec:assouad-dimension}.

\begin{theorem}\label{thm:assouad}
  Let $X$ be a planar self-affine set satisfying the strong separation condition and the projection condition. If for every $\eps>0$ there exists a fully supported $n$-step Bernoulli measure $\nu$ having simple Lyapunov spectrum such that $\dim(\pi\nu) \ge \dimh(X)-\eps$, then
  \begin{equation*}
    \cdima(X) = \dima(X)=1+\sup_{\atop{x\in X}{F\in X_F}}\dimh(X\cap(F+x)).
  \end{equation*}
\end{theorem}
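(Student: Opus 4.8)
The plan is to compute $\dima(X)$ through its weak tangents and then to bootstrap the resulting estimate to the conformal Assouad dimension. Write $s=\sup_{x\in X,\,F\in X_F}\dimh(X\cap(F+x))$. By Proposition~\ref{thm:KOR} one has $\dima(X)=\max\{\dimh(T):T\in\Tan(X)\}$, so everything reduces to locating the maximal Hausdorff dimension among weak tangents. The guiding heuristic is the Ledrappier--Young philosophy advertised in the introduction: such a tangent should factorise into a projection part and a slice part. After rotating $\vartheta_1$ onto the $x$-axis every weak tangent ought to be a comb $(\R\times C)\cap B(0,1)$ with $C$ porous (Theorem~\ref{thm:KKR} supplies this for tangent sets at generic points), the factor $\R$ producing the summand $1$ and the factor $C$ recording a slice of $X$ in a Furstenberg direction. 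Granting $\dimh(T)=1+\dimh(C)$, the equality $\dima(X)=1+s$ splits into realising slices as tangents (lower bound) and bounding $\dimh(C)$ by slice dimensions (upper bound).

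For the inequality $\dima(X)\ge 1+s$ I would fix $F\in X_F$ and $x\in X$ with $\dimh(X\cap(F+x))>s-\eps$ and construct a weak tangent of dimension at least $1+\dimh(X\cap(F+x))$. Invoking Lemma~\ref{lem:xfissupp} I choose a fully supported Bernoulli measure whose Furstenberg measure charges every neighbourhood of $F$, producing words $\iii$ with $\vartheta_2(\iii)$ near $F$; magnifying $X$ at a point of the slice along the scales $r_n\sim\|A_{\iii|_n}\|$ stretches the direction $\vartheta_1(\iii)$. By the projection condition and Lemma~\ref{lem:projcond} the renormalised maps $A_{\iii|_n}/\|A_{\iii|_n}\|$ project $X$ onto a direction in which it fills a full segment, so in the limit the stretched factor fills the whole line $\R$, while the cylinders meeting $F+x$ persist and reassemble, transversally, an affine copy of a positive portion of the slice. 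Passing to a Hausdorff-convergent subsequence yields $T=(\R\times C)\cap B(0,1)$ with $\dimh(C)\ge\dimh(X\cap(F+x))$, whence $\dimh(T)\ge 1+\dimh(X\cap(F+x))$; letting $\eps\to0$ gives $\dima(X)\ge 1+s$.

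The reverse inequality is the harder half. Given an arbitrary weak tangent $T=\lim_n M_{x_n,r_n}(X)\cap B(0,1)$ I must first promote the comb description of Theorem~\ref{thm:KKR} from tangent sets at a fixed generic point to weak tangents with moving base points $x_n$; here Lemma~\ref{lem:zero_slice}, asserting that slices carry no self-affine mass, is what prevents the transverse factor from collapsing and keeps it porous, so that $OT\subset(\R\times C)\cap B(0,1)$ for some rotation $O$ and some porous $C$, and hence $\dimh(T)\le 1+\dimh(C)$. The decisive point is then $\dimh(C)\le s$: the set $C$ is assembled from the transverse stacking of cylinders accumulating at the base point, and it is a Hausdorff limit of rescaled slices of $X$ taken in directions lying in $\bigcup_\jjj A_\jjj^{-1}Y_F$, which by Lemma~\ref{lem:furstenberg} converge into $X_F$. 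Since $X_F$ is invariant under the inverse maps and slices transform affinely under the maps $\fii_\iii$, each rescaled slice is comparable to a genuine slice $X\cap(F+x)$, giving $\dimh(C)\le s$. Together with the lower bound this establishes $\dima(X)=1+s$ (the degenerate combs $(\ell\times\{0\})\cap B(0,1)$ carry dimension $1\le 1+s$ and are immaterial).

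Finally, minimality requires $\cdima(X)\ge\dima(X)$, the opposite inequality being automatic. This is where the standing hypothesis enters. Feeding the approximating self-affine measure $\pi\nu$ with $\dim(\pi\nu)\ge\dimh(X)-\eps$, which is exact-dimensional and fully supported, into the construction of the second paragraph --- now transporting the measure rather than the bare set --- I would obtain a weak tangent that is a \emph{regular} comb $T_0=(\R\times C_0)\cap B(0,1)$, the factor $C_0$ carrying an Ahlfors regular conditional measure so that $\dimh(C_0)=\dima(C_0)$ can be pushed arbitrarily close to $s$. Such product combs are minimal for the conformal Assouad dimension by \cite{KOR,Mackay}, so $\cdima(T_0)=\dimh(T_0)$; and since the conformal Assouad dimension cannot increase when passing to a weak tangent (the weak-tangent mechanism behind Proposition~\ref{thm:KOR} is compatible with quasisymmetry, cf.\ \cite{KOR}), we get $\cdima(X)\ge\cdima(T_0)=\dimh(T_0)\ge 1+s-\eps$, and letting $\eps\to0$ closes the circle $\cdima(X)=\dima(X)=1+s$. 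I expect the two genuine obstacles to be, first, upgrading the comb description to arbitrary weak tangents with moving base points in the upper bound, and second, arranging by means of the hypothesis a single weak tangent that is at once of almost maximal dimension $1+s$ and Ahlfors regular enough to inherit conformal minimality.
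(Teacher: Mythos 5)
Your overall skeleton (weak tangents compute $\dima$, slices give the lower bound via the projection condition, combs give conformal minimality) matches the paper, but two of your steps fail as stated; write $s=\sup_{x\in X,\,F\in X_F}\dimh(X\cap(F+x))$. The fatal gap is in the upper bound. You bound $\dimh(C)\le s$ by exhibiting the transverse factor $C$ as a Hausdorff limit of rescaled slices of $X$; Hausdorff dimension is not upper semicontinuous under Hausdorff convergence (finite sets converge to any compact set), so ``limit of sets of dimension $\le s$'' gives nothing. Likewise, Theorem \ref{thm:KKR} is available only at $\nu$-generic base points, so there is no way to ``promote'' the comb description to a weak tangent whose base points $x_n$ move arbitrarily. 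The paper does neither: given a weak tangent $T=\lim M_{\pi\iii_k,s_k}(X)\cap B(0,1)$, it pulls back by $\fii_{\iii_k|_{n_k}}^{-1}$, with $n_k$ chosen maximal so that the pullback of $B(\pi\iii_k,s_k)$ stays inside a $\delta'$-ball, and extracts a limiting affine map $G+y$ with $G(T)+y\subset X\cap B(y,\delta')$ and $\|G\|$ bounded away from zero. If $\rank G=1$, then $\mathrm{Im}(G)\in X_F$ by the very definition of $X_F$, so $G(T)+y$ is a \emph{subset} of a genuine slice --- an inclusion, not a limit --- and $\dimh(T)\le 1+\dimh(G(T))$. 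If $\rank G=2$, then $\dimh(T)\le\dimh(X)$, and this is exactly where the standing hypothesis is consumed: the Ledrappier--Young formula applied to the approximating $n$-step Bernoulli measures gives $\dimh(X)\le\dim(\pi\nu)+\eps\le 1+s+\eps$. Your proposal never invokes the hypothesis in the upper bound (you spend it only on minimality), so the rank-two degeneration --- the one case that hypothesis exists to kill --- is left unhandled.

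Second, your minimality argument requires a weak tangent that is simultaneously of dimension close to $1+s$ and an Ahlfors-regular comb, so that $\dima(C_0)=\dimh(C_0)$; neither your construction nor the paper produces such regularity, and it is not needed. Since $\dimh\le\dima$ for every quasisymmetric image, one always has $\cdimh\le\cdima$, and combs are minimal for the conformal \emph{Hausdorff} dimension (Mackay--Tyson, Theorem 4.1.11, as cited in the paper); hence for any comb tangent $T$ with $\dimh(T)\ge\dima(X)-\eps$, which Proposition \ref{thm:conf_assouad_minimality} supplies, one gets $\cdima(X)\ge\cdima(T)\ge\cdimh(T)=\dimh(T)\ge\dima(X)-\eps$ with no regularity whatsoever. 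Finally, in the lower bound you magnify at a point of the slice; such points need not be $\nu$-generic, so you again lose Theorem \ref{thm:KKR} there. The paper instead magnifies at a generic point $\pi\jjj$ and uses Poincar\'e recurrence (Lemma \ref{lem:visit}) to make $M_{x,\delta}\circ\fii_{\jjj|_{n_k}}^{-1}\circ M_{\pi\jjj,s_k}^{-1}$ converge to $\pm\nproj_F^{\vartheta_1(\jjj)}$, so that the slice at an \emph{arbitrary} $x$ is captured by the projection of a comb tangent based elsewhere --- and even then only up to the exceptional set $V_{F,x}$ of slice points having a cylinder on one side of $F+x$, whose possible dimension dominance forces the separate case analysis via Lemma \ref{lem:division} that your sketch omits, as well as the final localization-removal step using countable stability of Hausdorff dimension.
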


Let us next investigate for which planar self-affine sets $X$ which satisfy the strong separation condition and the projection condition the above result can be applied to. The results of Bedford \cite{Bedford1984}, McMullen \cite{McMullen1984}, Gatzouras and Lalley \cite{gatzouraslalley}, and Bara\'nski \cite{Baranski2007} imply that Theorem~\ref{thm:assouad} is applicable for the self-affine sets described in these papers. Under the strong separation condition and the projection condition, our theorem thus generalizes the results of Jordan and Fraser \cite[Corollary~2.3(1)]{FraserJordan2017}, Mackay \cite[Theorems 1.1--1.4]{Mackay}, and K\"aenm\"aki, Ojala, and Rossi \cite[Theorem B]{KOR}. In fact, Theorem~\ref{thm:assouad} holds generically for self-affine sets $X$ defined by diagonal matrices: If the associated matrix tuple $\mathsf{A}$ is diagonal with $\max_{i \in \{1,\ldots,N\}}\|A_i\|<\tfrac12$, then, by \cite[Theorem 4.5]{Kaenmaki2004}, \cite[Theorem 1.7(iii)]{FengKaenmaki2011}, and the continuity of the Lyapunov exponents and the entropy on Bernoulli measures, Theorem~\ref{thm:assouad} is applicable for almost every choice of translation vectors.

In addition to diagonal systems, Theorem~\ref{thm:assouad} applies to self-affine sets $X$ defined by matrix tuples $\mathsf{A}$ consisting of simultaneously non-diagonalizable upper-triangular matrices having first diagonal element strictly larger than the second one; see B\'ar\'any, Hochman, and Rapaport \cite[Proposition~6.6]{barany2017hausdorff}. For other admissible triangular systems, see Bara\'nski \cite{baranski2} and Kolossv\'ary and Simon \cite{kolossvary2018triangular}. Finally, by Proposition \ref{prop:app}, Theorem~\ref{thm:assouad} is also applicable if the associated matrix tuple $\mathsf{A}$ is dominated and strongly irreducible:

\begin{example}\label{example}
  We exhibit a dominated and strongly irreducible planar self-affine set $X$ satisfying the strong separation condition and the projection condition. By Proposition \ref{prop:app} and Theorem~\ref{thm:assouad}, we thus have $\cdima(X) = \dima(X)=1+\sup\{\dimh(X\cap(F+x)) : x\in X \text{ and } F\in X_F\}$.

  Fix
  \begin{alignat*}{3}
    \textrm{(i)}\;\,   & 0<\gamma<\tfrac12<\lambda<1, \qquad\qquad\quad &
    \textrm{(ii)}\;\,  & a,b,d>0,                     \\
    \textrm{(iii)}\;\, & 0<v_3^1<1-a-b,               \qquad\qquad\quad &
    \textrm{(iv)}\;\,  & \gamma<v_3^2<1-\gamma-b-d,   \\
    \textrm{(v)}\;\,   & ad>b^2,                      \qquad\qquad\quad &
    \textrm{(vi)}\;\,  & \frac{1-2\lambda}{1-2\gamma}<\frac{a-d-\sqrt{(a-d)^2+4b^2}}{2b},
  \end{alignat*}
  and define
  \begin{equation*}
    A_1 =
    \begin{pmatrix}
      \lambda & 0 \\
      0 & \gamma
    \end{pmatrix}, \quad
    A_2 =
    \begin{pmatrix}
      \lambda & 0 \\
      0 & \gamma
    \end{pmatrix}, \quad
    A_3 =
    \begin{pmatrix}
      a & b \\
      b & d
    \end{pmatrix},
  \end{equation*}
  and
  \begin{align*}
    \fii_1(x,y) &= A_1(x,y), \\
    \fii_2(x,y) &= A_2(x,y) + (1-\lambda,1-\gamma), \\
    \fii_3(x,y) &= A_3(x,y) + (v_3^1,v_3^2)
  \end{align*}
  for all $(x,y) \in \R^2$. Let $X \subset [0,1]^2$ be the self-affine set associated to the tuple $(\fii_1,\fii_2,\fii_3)$; see Figure \ref{fig:example} for illustration.
  \begin{figure}
  \begin{tikzpicture}[scale=0.7]
    \draw[fill=black!10!white, draw=black] (0,2.5) -- (7.5,2.5) -- (7.5,0) -- (0,0) -- cycle;
    \draw[fill=black!10!white, draw=black] (2.5,10) -- (2.5,7.5) -- (10,7.5) -- (10,10) -- cycle;
    \draw[fill=black!10!white, draw=black] (2,3) -- (6,4) -- (7,7) -- (3,6) -- cycle;

    \draw (0,2.5) -- (0,10) -- (2.5,10);
    \draw (7.5,0) -- (10,0) -- (10,7.5);

    \node[align=center] at (3.75,1.25) {$\fii_1([0,1]^2)$};
    \node[align=center] at (6.25,8.75) {$\fii_2([0,1]^2)$};
    \node[align=center] at (4.5,5) {$\fii_3([0,1]^2)$};

    \node[align=center] at (3.75,-0.4) {$\lambda$};
    \node[align=center] at (-0.4,1.25) {$\gamma$};
    \node[align=center] at (6.25,10.4) {$\lambda$};
    \node[align=center] at (10.4,8.75) {$\gamma$};
  \end{tikzpicture}
  \caption{Images of the unit square under the mappings $\fii_i$ in Example \ref{example}.}
  \label{fig:example}
  \end{figure}
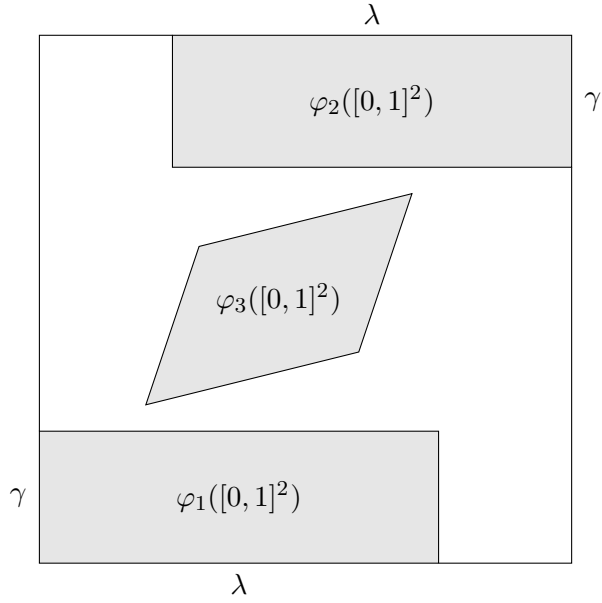

  Let us first show that the tuple $\mathsf{A} = (A_1,A_2,A_3)$ is dominated and strongly irreducible. Since $A_1$ and $A_2$ preserve only the $x$-axis and $y$-axis and, by (ii), $A_3$ does not preserve neither axis nor their union, it follows that $\mathsf{A}$ is strongly irreducible. By (v), the eigendirections of the symmetric matrices $A_1$, $A_2$, and $A_3$ corresponding to the largest eigenvalues in absolute value are
  \begin{equation*}
    u_1 = u_2 = (1,0) \quad \text{and} \quad u_3 = \biggl(\frac{a - d + \sqrt{(a-d)^2 + 4 b^2}}{2 b},1\biggr),
  \end{equation*}
  respectively. Thus there exist $\eps>0$ such that the closed $\eps$-neighborhood of
  \begin{equation*}
    \{ \linspan\{tu_1 + (1-t)u_3\} \in \RP : 0 \le t \le 1 \}
  \end{equation*}
  gets mapped into its interior by all the matrices. Therefore, $\mathsf{A}$ is dominated.

  Let us then show that $X$ satisfies the strong separation condition and the projection condition. The conditions (i)--(iv) guarantee that $\fii_i([0,1]^2) \cap \fii_j([0,1]^2) = \emptyset$ whenever $i \ne j$ implying the strong separation condition. To see that the projection condition holds, recall that, by the definition of $Y_F$ in \eqref{eq:YF},
  \begin{equation*}
    \bigcup_{\iii \in \bigcup_{n \in \N}\{1,2,3\}^n} A_\iii^{-1} Y_F \subset \{ \linspan\{(t,1)\} \in \RP : \frac{a-d-\sqrt{(a-d)^2+4b^2}}{2b} \le t \le 0 \}.
  \end{equation*}
  Let $Z$ be the self-affine set associated to the tuple $(\fii_1,\fii_2)$. By (vi), it is enough to show that $\proj_{\linspan\{(t,1)\}^\bot}(Z)$ is a non-trivial closed line segment for all $\frac{1-2\lambda}{1-2\gamma} < t \le 0$. Up to a linear transformation, it is enough to show that $P_t(Z) = [0,1-t]$ for all $\frac{1-2\lambda}{1-2\gamma} < t \le 0$, where $P_t(x,y) = x-ty$ for all $(x,y) \in \R^2$. To see this, fix $\frac{1-2\lambda}{1-2\gamma} < t \le 0$ and notice that it is sufficient to prove
  \begin{equation} \label{eq:example1}
    P_t(\varphi_{\iii}([0,1]^2)) = \bigcup_{i\in\{1,2\}}P_t(\varphi_{\iii i}([0,1]^2))
  \end{equation}
  for all $\iii \in \bigcup_{n \in \N} \{1,2\}^n$. Indeed, if \eqref{eq:example1} holds, then
  \begin{equation*}
    P_t(Z) = P_t\biggl(\bigcap_{n=1}^\infty\bigcup_{\iii\in\{1,2\}^n}\varphi_\iii([0,1]^2)\biggr) = \bigcap_{n=1}^\infty\bigcup_{\iii\in\{1,2\}^n}P_t(\varphi_\iii([0,1]^2))=P_t([0,1]^2) = [0,1-t].
  \end{equation*}
  To prove \eqref{eq:example1}, fix $n \in \N$ and $\iii = i_1 \cdots i_n \in \{1,2\}^n$. Write $v_1=(0,0)$ and $v_2=(1-\lambda,1-\gamma)$, and observe that
  \begin{align*}
    \fii_\iii([0,1]^2) &= A_\iii([0,1]^2) + \fii_\iii(0) = A_1^n([0,1]^2) + \sum_{k=1}^n A_{\iii|_{k-1}}v_{i_n} \\
    &= [0,\lambda^n] \times [0,\gamma^n] + \biggl(\sum_{k=1}^n v_{i_k}^1 \lambda^{k-1}, \sum_{k=1}^n v_{i_k}^2 \gamma^{k-1}\biggr).
  \end{align*}
  Therefore,
  \begin{align*}
    P_t(\fii_\iii([0,1]^2)) &= P_t\biggl(\biggl([0,\lambda^n] + \sum_{k=1}^n v_{i_k}^1 \lambda^{k-1}\biggr) \times \biggl([0,\gamma^n] + \sum_{k=1}^n v_{i_k}^2 \gamma^{k-1}\biggr)\biggr) \\
    &= \sum_{k=1}^n v_{i_k}^1 \lambda^{k-1} - t\sum_{k=1}^n v_{i_k}^2 \gamma^{k-1} + [0,\lambda^n-t\gamma^n].
  \end{align*}
  Similarly, we see that
  \begin{equation*}
    P_t(\fii_{\iii 1}([0,1]^2)) = \sum_{k=1}^n v_{i_k}^1 \lambda^{k-1} - t\sum_{k=1}^n v_{i_k}^2 \gamma^{k-1} + [0,\lambda^{n+1}-t\gamma^{n+1}]
  \end{equation*}
  and
  \begin{equation*}
    P_t(\fii_{\iii 2}([0,1]^2)) = \sum_{k=1}^n v_{i_k}^1 \lambda^{k-1} + (1-\lambda)\lambda^n - t\sum_{k=1}^n v_{i_k}^2 \gamma^{k-1} - t(1-\gamma)\gamma^n + [0,\lambda^{n+1}-t\gamma^{n+1}].
  \end{equation*}
  Thus, $P_t(\varphi_{\iii}([0,1]^2)) = P_t(\varphi_{\iii 1}([0,1]^2)) \cup P_t(\varphi_{\iii 2}([0,1]^2))$ if and only if $(1-\lambda)\lambda^n - t(1-\gamma)\gamma^n \le \lambda^{n+1}-t\gamma^{n+1}$ or, equivalently, $\frac{\lambda^n(1-2\lambda)}{\gamma^n(1-2\gamma)} \le t$. Since $\frac{1-2\lambda}{1-2\gamma} < t \le 0$, this holds for all $n \in \N$ by (i).
\end{example}

\section{Tangent sets} \label{sec:tangent-sets}

In this section, we develop the machinery to study the tangential structure of self-affine sets and prove Theorem \ref{thm:maininformal}. We begin with the following Poincar\'e recurrence type lemma which allows us to approximate a given point $(\iii,F)$ arbitrary well by the elements of the orbit of a generic point $(\jjj,L)$ under the dynamics introduced by the skew-product $T \colon \Sigma \times \RP \to \Sigma \times \RP$ defined in \eqref{eq:skew-product}.

\begin{lemma}\label{lem:visit}
  Let $\mathsf{A}=(A_1,\ldots,A_N) \in GL_2(\R)^N$ and $\nu$ be a fully supported Bernoulli measure having simple Lyapunov spectrum. If $\mu_F$ is the Furstenberg measure with respect to $\nu$, then there exists a set $\Theta \subset \Sigma \times \RP$ with $\nu \times \mu_F(\Theta)=1$ such that $\vartheta_1(\jjj)$ exists,
  \begin{align*}
    \chi_1(\nu) &= -\lim_{n\to\infty} \tfrac{1}{n}\log\|A_{\jjj|_n}^{-1}|\vartheta_1(\jjj)\|^{-1}, \\
    \chi_2(\nu) &= -\lim_{n\to\infty} \tfrac{1}{n}\log\|A_{\jjj|_n}^{-1}|L\|^{-1}
  \end{align*}
  for all $(\jjj,L) \in \Theta$, and there exists a countable dense set $\mathcal{F}$ so that
  \begin{equation*}
    \#\{n\in\N : T^n(\jjj,L)\in[\iii]\times B(F,\tfrac{1}{k})\} = \infty
  \end{equation*}
  for all $(\jjj,L) \in \Theta$, $\iii \in \Sigma_*$, $F \in \mathcal{F}$, and $k \in \N$.
\end{lemma}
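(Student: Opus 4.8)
The plan is to realise $\Theta$ as the intersection of three full-measure sets---a set $\Theta_1$ handling the existence of $\vartheta_1(\jjj)$ together with the first limit, a set $\Theta_2$ for the second limit, and a set $\Theta_3$ for the recurrence---and to take $\mathcal{F}$ to be a countable dense subset of $\spt(\mu_F)$. Throughout I use that $(\Sigma\times\RP,T,\nu\times\mu_F)$ is ergodic, as recalled just before the lemma, and that the Bernoulli measure $\nu$ is itself ergodic.

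\emph{The second limit.} Since $-\tfrac1n\log x^{-1}=\tfrac1n\log x$, the claim is that $\tfrac1n\log\|A_{\jjj|_n}^{-1}|L\|\to\chi_2(\nu)$. Because $A_{\jjj|_n}^{-1}=A_{j_n}^{-1}\cdots A_{j_1}^{-1}$, equation \eqref{eq:furst} asserts precisely that for each fixed $V\in\spt(\mu_F)$ this convergence holds for $\nu$-almost every $\jjj$. By Fubini's theorem the Borel set of pairs $(\jjj,V)$ for which $\tfrac1n\log\|A_{\jjj|_n}^{-1}|V\|\to\chi_2(\nu)$ then has full $\nu\times\mu_F$-measure, and I take this set to be $\Theta_2$. (Conceptually, the multiplicativity $\|BA|V\|=\|B|AV\|\,\|A|V\|$ along a line shows that $\log\|A_{\jjj|_n}^{-1}|L\|=\sum_{m=0}^{n-1}g(T^m(\jjj,L))$ with $g(\iii,V)=\log\|A_{i_1}^{-1}|V\|$ bounded, so Birkhoff's theorem independently yields the convergence and forces $\int g\dd(\nu\times\mu_F)=\chi_2(\nu)$.)

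\emph{Existence of $\vartheta_1$ and the first limit.} By Oseledets' theorem $\vartheta_1(\jjj)$ exists for $\nu$-almost every $\jjj$; on this set the universal lower bound $\|A_{\jjj|_n}^{-1}|V\|\ge\|A_{\jjj|_n}\|^{-1}$ (the smallest singular value of $A_{\jjj|_n}^{-1}$) gives $\liminf_n-\tfrac1n\log\|A_{\jjj|_n}^{-1}|\vartheta_1(\jjj)\|^{-1}\ge\chi_1(\nu)$ at once. For the matching upper bound I would combine the exact identity $\|A_{\jjj|_n}^{-1}|\vartheta_1(\jjj|_n)\|^{-1}=\|A_{\jjj|_n}\|$, valid because $\vartheta_1(\jjj|_n)$ is the leading left singular direction of $A_{\jjj|_n}$, with the singular value decomposition estimate
\begin{equation*}
  \|A_{\jjj|_n}^{-1}|\vartheta_1(\jjj)\|^2=\cos^2\theta_n\,\|A_{\jjj|_n}\|^{-2}+\sin^2\theta_n\,\|A_{\jjj|_n}^{-1}\|^2,\qquad \theta_n=\sphericalangle(\vartheta_1(\jjj),\vartheta_1(\jjj|_n)).
\end{equation*}
Here the simple Lyapunov spectrum enters decisively: it yields both $\tfrac1n\log\|A_{\jjj|_n}^{-1}\|\to\chi_2(\nu)$ and the exponential convergence $\limsup_n\tfrac1n\log\theta_n\le-(\chi_2(\nu)-\chi_1(\nu))$ of the leading singular direction, so the second summand is asymptotically dominated by the first and $\limsup_n-\tfrac1n\log\|A_{\jjj|_n}^{-1}|\vartheta_1(\jjj)\|^{-1}\le\chi_1(\nu)$. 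Let $\Theta_1$ be the resulting full-measure set. (Alternatively, the equivariance $A_{\jjj|_n}^{-1}\vartheta_1(\jjj)=\vartheta_1(\sigma^n\jjj)$ rewrites $\log\|A_{\jjj|_n}^{-1}|\vartheta_1(\jjj)\|$ as a genuine Birkhoff sum for $\sigma$, giving existence of the limit immediately and leaving only the identification of its value with $\chi_1(\nu)$.)

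\emph{Recurrence and conclusion.} Let $\mathcal{F}$ be a countable dense subset of $\spt(\mu_F)$. For all $\iii\in\Sigma_*$, $F\in\mathcal{F}$, and $k\in\N$ the box $[\iii]\times B(F,\tfrac1k)$ has positive $\nu\times\mu_F$-measure, because $\nu$ is fully supported, so $\nu([\iii])>0$, and $F\in\spt(\mu_F)$, so $\mu_F(B(F,\tfrac1k))>0$. Applying Birkhoff's theorem to the indicator of this box and using ergodicity, $\nu\times\mu_F$-almost every $(\jjj,L)$ has orbit averages converging to the positive measure of the box, hence its orbit meets the box for infinitely many $n$; intersecting over the countable family indexed by $(\iii,F,k)$ produces a full-measure set $\Theta_3$. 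Setting $\Theta=\Theta_1\cap\Theta_2\cap\Theta_3$ gives a set of full $\nu\times\mu_F$-measure on which every assertion holds. The only non-routine step is the upper bound in the first limit: the ergodic theorem supplies existence and the lower bound for free, but pinning the value at $\chi_1(\nu)$ requires the quantitative spectral gap---the exponential convergence of $\vartheta_1(\jjj|_n)$ to $\vartheta_1(\jjj)$---which is exactly the content of the simple Lyapunov spectrum; everything else is a routine application of Birkhoff's and Fubini's theorems together with \eqref{eq:furst}.
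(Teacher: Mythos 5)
Your proposal is correct and takes essentially the same route as the paper: the paper's own (three-sentence) proof likewise chooses $\mathcal{F}$ to be a countable dense subset of $\spt(\mu_F)$, observes that each box $[\iii]\times B(F,\tfrac1k)$ has positive $\nu\times\mu_F$-measure, and obtains $\Theta$ from Oseledets' Theorem, the Birkhoff Ergodic Theorem applied with the ergodicity of $\nu\times\mu_F$ under $T$, and countability of the parameters $(\iii,F,k)$. Your sets $\Theta_1$, $\Theta_2$, $\Theta_3$ simply unpack these three citations in detail (the SVD estimate and the exponential convergence of the leading singular direction being the standard content of Oseledets' Theorem under a simple Lyapunov spectrum, and the Fubini argument from \eqref{eq:furst} replacing the skew-product Birkhoff argument for the second limit), so the substance agrees with the paper's proof.
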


\begin{proof}
  Since $\spt(\mu_F)$ is a compact subset of $\RP$, such a countable dense set $\mathcal{F} \subset \spt(\mu_F)$ exists. Note that $\nu \times \mu_F([\iii]\times B(F,r))>0$ for all $\iii\in\Sigma_*$, $F\in\mathcal{F}$, and $r>0$. The existence of the claimed set $\Theta \subset \Sigma\times\RP$ now follows from Oseledets' Theorem, Birkhoff Ergodic Theorem, and countability of the parameters $\iii \in \Sigma_*$, $F \in \mathcal{F}$, and $k \in \N$.
\end{proof}

Observe that the statement of Lemma \ref{lem:visit} actually holds for every $F \in \spt(\mu_F)$. We stated the lemma in the form we apply it later. We shall often use a line $L \subset \R^2$ to divide the plane into two half-planes (in the obvious way). In such situations we refer to these half-planes as \emph{half-planes determined by $L$}.

\begin{lemma}\label{lem:omega2}
	Let $X$ be a planar self-affine set satisfying the strong separation condition and the projection condition. If $\nu$ is a fully supported Bernoulli measure, then the set
	\begin{align*}
		B_\iii = \{F\in\RP : \;&\text{there exist }\kkk\in\Sigma \text{ and } n\in\N \text{ such that} \\
    &\pi\kkk\in F+\pi\iii \text{ and } \pi[\kkk|_n] \text{ is contained in one} \\
    &\text{of the closed half-planes determined by }F+\pi\iii\}
	\end{align*}
	 is at most countable for $\nu$-almost all $\iii\in\Sigma$.
\end{lemma}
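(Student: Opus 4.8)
The plan is to bound $B_\iii$, for generic $\iii$, by a countable union of sets each containing at most two directions. For $x \in \R^2$ and $\jjj \in \Sigma_*$ let $S(\jjj,x) \subset \RP$ denote the set of directions $F$ for which the line $F+x$ supports the convex body $\conv(\fii_\jjj(X))$, that is, $\fii_\jjj(X)=\pi[\jjj]$ lies in one of the closed half-planes determined by $F+x$. Since $X$ is not contained in a line (a consequence of the projection condition), every affine copy $\fii_\jjj(X)$ is not contained in a line either, so each $\conv(\fii_\jjj(X))$ is a genuine planar convex body with nonempty interior. Directly from the definition of $B_\iii$, if $F\in B_\iii$ is witnessed by $\kkk$ and $n$, then the line $F+\pi\iii$ passes through $\pi\iii$ and keeps $\fii_{\kkk|_n}(X)$ in one closed half-plane, so $F\in S(\kkk|_n,\pi\iii)$. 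Thus $B_\iii\subset\bigcup_{\jjj\in\Sigma_*}S(\jjj,\pi\iii)$, and it suffices to show $\#S(\jjj,\pi\iii)\le 2$ for every $\jjj\in\Sigma_*$ and $\nu$-almost every $\iii$.

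Next I would carry out the elementary planar-convexity case analysis for $S(\jjj,x)$ at a fixed $x$. If $x\notin\conv(\fii_\jjj(X))$, then there are exactly two tangent lines from $x$ to the convex body, so $\#S(\jjj,x)=2$; if $x\in\inter\conv(\fii_\jjj(X))$, then $S(\jjj,x)=\emptyset$; and if $x\in\partial\conv(\fii_\jjj(X))$, then $S(\jjj,x)$ is the arc of supporting directions at the boundary point $x$, which is a single direction unless $x$ is a vertex of $\conv(\fii_\jjj(X))$, i.e.\ a boundary point with nondegenerate normal cone, in which case it is a nondegenerate arc. Hence $S(\jjj,\pi\iii)$ can fail to have at most two elements only if $\pi\iii$ is a vertex of $\conv(\fii_\jjj(X))$.

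The heart of the argument is therefore to rule out, for $\nu$-almost every $\iii$, that $\pi\iii$ is a vertex of any $\conv(\fii_\jjj(X))$. I would first record the standard fact that a planar convex body has at most countably many vertices, since their nondegenerate normal cones are pairwise disjoint arcs of the direction circle; thus the set $\mathcal{V}$ of vertices of $\conv(X)$ is countable. If $\jjj$ is not a prefix of $\iii$, then by the strong separation condition $\pi\iii\notin\fii_\jjj(X)$, and since vertices of $\conv(\fii_\jjj(X))$ are extreme points and hence lie in $\fii_\jjj(X)$, the point $\pi\iii$ cannot be a vertex. If $\jjj=\iii|_m$ is a prefix, then applying the affine bijection $\fii_{\iii|_m}^{-1}$, which maps $\conv(\fii_{\iii|_m}(X))$ onto $\conv(X)$ and carries vertices to vertices, shows that $\pi\iii$ is a vertex of $\conv(\fii_{\iii|_m}(X))$ if and only if $\pi(\sigma^m\iii)\in\mathcal{V}$. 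Setting $\mathcal{C}=\pi^{-1}(\mathcal{V})$, which is countable because $\pi$ is injective under the strong separation condition, the bad event is exactly $\iii\in\bigcup_{m\ge 0}\sigma^{-m}\mathcal{C}$. As $\nu$ is a fully supported Bernoulli measure, it is $\sigma$-invariant and non-atomic, so $\nu(\sigma^{-m}\mathcal{C})=\nu(\mathcal{C})=0$ for every $m$, whence $\nu\bigl(\bigcup_{m\ge0}\sigma^{-m}\mathcal{C}\bigr)=0$.

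Putting the pieces together, for every $\iii$ outside the $\nu$-null set $\bigcup_{m\ge0}\sigma^{-m}\mathcal{C}$ we have $\#S(\jjj,\pi\iii)\le 2$ for all $\jjj\in\Sigma_*$, so $B_\iii\subset\bigcup_{\jjj\in\Sigma_*}S(\jjj,\pi\iii)$ is a countable union of sets of cardinality at most two, hence at most countable. The main obstacle, and the only place input beyond planar convexity is genuinely needed, is the boundary case $\pi\iii\in\partial\conv(\fii_\jjj(X))$: one must ensure that the potentially uncountably many directions arising from a vertex never actually occur, and this is precisely what the $\sigma$-invariance and non-atomicity argument applied to the countable set $\mathcal{C}$ delivers.
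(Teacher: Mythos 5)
Your proof is correct, but it reaches the generic exclusion by a genuinely different mechanism than the paper. Both arguments share the same skeleton: $B_\iii$ is covered by the sets $S(\jjj,\pi\iii)$ of supporting lines through $\pi\iii$ of the countably many hulls $\conv(\pi[\jjj])$, so it suffices to make each such set finite for $\nu$-almost every $\iii$. The paper does this by first proving that $\conv(X)$ has interior points belonging to $X$ --- this is the bulk of its proof and leans heavily on the projection condition, via a delicate argument about arcs of $\partial\conv(X)$ and their projections --- and then invoking Birkhoff's ergodic theorem to conclude that $\nu$-almost every $\iii$ eventually enters a cylinder buried in that interior, so that $\pi\iii$ avoids every $\fii_\jjj(\partial\conv(X))$; for such $\iii$, each hull sees $\pi\iii$ either strictly inside (no supporting lines) or strictly outside (exactly two). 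You instead push the convexity case analysis one step further: non-vertex boundary points contribute at most one direction, so the only genuinely bad event is that $\pi\iii$ is a vertex of some $\conv(\pi[\jjj])$. You then dispose of that event by soft means --- a planar convex body has at most countably many vertices, vertices are extreme points and hence lie in $\fii_\jjj(X)$ itself, the strong separation condition both rules out non-prefix words $\jjj$ and makes $\pi$ injective, and shift-invariance plus non-atomicity of the fully supported Bernoulli measure kill the countable union $\bigcup_{m\ge0}\sigma^{-m}\pi^{-1}(\mathcal{V})$. Your route is more elementary (no ergodic theorem, no interior lemma) and uses the projection condition only through the fact that $X$ is not contained in a line; the paper's route proves strictly more along the way, namely $\pi\nu(\partial\conv(X)\cap X)=0$ and that a.e.\ point avoids all affine copies of $\partial\conv(X)$, but this extra strength is not used elsewhere, so your argument would serve as a drop-in replacement.

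One wording caveat: your gloss of $S(\jjj,x)$ (``that is, $\pi[\jjj]$ lies in one of the closed half-planes'') drops the tangency requirement. Read literally, $S(\jjj,x)$ would contain a whole arc of directions whenever $x\notin\conv(\pi[\jjj])$, and the bound $\#S(\jjj,x)\le 2$ would fail. Your case analysis makes clear you intend the standard notion of supporting line (the line also meets $\conv(\pi[\jjj])$), and the inclusion $B_\iii\subset\bigcup_{\jjj}S(\jjj,\pi\iii)$ does hold under that reading, because in the definition of $B_\iii$ the line $F+\pi\iii$ passes through the point $\pi\kkk\in\pi[\kkk|_n]$; just state the definition of $S$ with the tangency included.
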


\begin{proof}
  Let $C = \conv(X)$ be the convex hull of $X$ and denote its interior by $C^o$. Let us first show that $C^o\cap X\neq\emptyset$. Suppose to the contrary that $C^o \cap X = \emptyset$. Then $X \subset \partial C := C\setminus C^o$. By \cite[Remark 3.4]{KaenmakiKoivusaloRossi2017}, we know that $X$ is not contained in a line and hence, $\partial C$ is not a line segment. Let $V \in \bigcup_{n\geq n_0} \bigcup_{\iii\in\Sigma_n} A_\iii^{-1}Y_F$. Since the projection condition holds, $\proj_{V^\perp}X$ is a non-trivial closed line segment and hence, there exist $x_1,x_2 \in \partial C$ such that $x_1-x_2\in V$, the line segment connecting $x_1$ and $x_2$ is not contained in $\partial C$, and $x_1\in X$ or $x_2\in X$. Without loss of generality, we may assume that $x_1\in X$.

  There are now two cases, either $x_2 \in X$ or $x_2 \notin X$. If $x_2 \notin X$, then there exists an open arc $J$ such that $x_2\in J$ and $J\subset \partial C\setminus X$, and whose endpoints $w_1,w_2 \in \R^2$ are contained in $X$ but $w_1-w_2 \notin V$. Since $\{x_1\} = \bigcap_{n \in \N} \fii_{\iii|_n}(X)$ for some $\iii \in \Sigma$, there exists $n \in \N$ such that $x_1\in\varphi_{\iii|_n}(X)$ and $V+x$ intersects $J$ for all $x\in\varphi_{\iii|_n}(\partial C)\cap\partial C$. Observe that $\varphi_{\iii|_n}(X) \subset \partial C$ is a self-affine set associated to the tuple $(\varphi_{\iii|_n} \circ \varphi_1 \circ \varphi_{\iii|_n}^{-1},\ldots,\varphi_{\iii|_n} \circ \varphi_N \circ \varphi_{\iii|_n}^{-1})$ and satisfies the strong separation condition. Thus, $\varphi_{\iii|_n}(X)$ cannot be a closed arc segment. Indeed, if $\varphi_{\iii|_n}(X)$ was a closed arc, then it is a union of closed arcs $\varphi_{\iii|_n}(\varphi_i(X))$ which, by the strong separation condition, are pairwise disjoint, a contradiction. But this means that there exists an open arc $I\subset\varphi_{\iii|_n}(\partial C)\cap\partial C$ such that $X\cap I=\emptyset$, the endpoints $v_1,v_2 \in \R^2$ of $I$ are in $X$ and $v_1-v_2 \notin V$, and $V+x$ intersects $J$ for all $x\in I$. Thus, $\proj_{V^\perp}(X)\subset\proj_{V^\perp}(\partial C\setminus(I\cup J))=\proj_{V^\perp}\partial C\setminus(\proj_{V^\perp}I\cap\proj_{V^\perp}J)$, where $\proj_{V^\perp}I\cap\proj_{V^\perp}J$ is clearly non-empty and open, and have both endpoints in $\proj_{V^\perp}X$ contradicting the projection condition. Therefore, $C^o\cap X\neq\emptyset$ provided that $x_2 \notin X$.

  If $x_2\in X$, then, as $\{x_2\} = \bigcap_{m \in \N} \fii_{\jjj|_m}(X)$ for some $\jjj \in \Sigma$, there exists $m \in \N$ such that $\proj_{V^\perp}\varphi_{\jjj|_m}(X)\subset(\proj_{V^\perp}X)^o$ and $x_1 \notin \varphi_{\jjj|_m}(X)$. Again $\varphi_{\jjj|_m}(X)$ cannot be a closed arc and thus, there exists an open arc $J\subset\varphi_{\jjj|_m}(\partial C) \cap \partial C$ such that $J\cap X = \emptyset$ and the endpoints $w_1,w_2 \in \R^2$ of $J$ are contained in $X$ so that $w_1-w_2 \notin V$. By the projection condition, for any $y\in J$, $V+y$ must intersect $X$. By replacing $x_2$ and $x_1$ by an arbitrary point in $J$ and $V+x_2\cap X$, respectively, we can repeat the previous argument. Thus, $C^o\cap X\neq\emptyset$ also in this case.

  Let us next show that if $\nu$ is a fully supported Bernoulli measure, then $\pi\nu(\partial C \cap X) = 0$. It is easy to see that $\partial C \cap X \subset \bigcup_i\varphi_{i=1}^N(\partial C\cap X)$, i.e.\ $\partial C \cap X$ is sub-self-affine. Since $C^o \cap X \neq \emptyset$, there exists $\jjj \in \Sigma_*$ such that $\varphi_\jjj(X) \cap \partial C = \emptyset$. Since $\nu$ is fully supported, Birkhoff Ergodic Theorem implies that $\sigma^n\iii\in[\jjj]$ infinitely often for $\nu$-almost all $\iii \in \Sigma$. Therefore, $\pi\nu(\partial C\cap X)=0$ and, consequently, $\pi\nu(\bigcup_{\iii\in\Sigma_*}\varphi_\iii(\partial C))=0$. Observe that for every $\iii \in \Sigma$ with $\pi\iii \in X \setminus \bigcup_{\iii\in\Sigma_*}\varphi_\iii(\partial C)$ and for every $F \in \RP$ and $n \in \N$, the cylinder $\pi[\iii|_n]$ intersects both the half-planes determined by $F+\pi(\iii)$. The claim then follows, since there exist only countably many cylinders and for each canonical projection of a cylinder there are at most two lines $F \in \RP$ that meet the cylinder so that the cylinder is contained in only one of the closed half-planes determined by $F+\pi\iii$.
\end{proof}

The core of the following lemma is that, generically, the lines $L$ suitable for the previous Poincar\'e recurrence result are such that a generic limit direction $\vartheta_1$ stays away from it. This allows us to consider projections in coordinates given by these lines.

\begin{lemma}\label{lem:omega}
  Let $\mathsf{A}=(A_1,\ldots,A_N) \in GL_2(\R)^N$ and $\nu$ be a fully supported Bernoulli measure having simple Lyapunov spectrum. If $\mu_F$ is the Furstenberg measure with respect to $\nu$, then there exists a set $\Omega \subset \Sigma$ with $\nu(\Omega)=1$ such that for every $\iii \in \Omega$ we have $\mu_F(B_\iii) = 0$, and for almost every $L \in \spt(\mu_F) \setminus B_\iii$ it holds that
  \begin{equation*}
    \sphericalangle(\vartheta_1(\iii),L) > 0 \quad\text{and}\quad (\iii,L) \in \Theta,
  \end{equation*}
  where the sets $B_\iii$ are as in Lemma \ref{lem:omega2} and $\Theta \subset \Sigma \times \RP$ is as in Lemma \ref{lem:visit}.
\end{lemma}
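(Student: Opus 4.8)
The plan is to obtain $\Omega$ as a full-measure set on which two independent facts hold simultaneously, each established by a Fubini argument applied to the product measure $\nu\times\mu_F$ on $\Sigma\times\RP$. The first fact is that the angle condition is automatic once $(\iii,L)\in\Theta$; the second is that $\mu_F(B_\iii)=0$, which is the genuinely new point and must survive even when $\mu_F$ carries atoms.

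For the angle condition I would argue that no extra work beyond membership in $\Theta$ is needed. Indeed, suppose $(\iii,L)\in\Theta$ and $\vartheta_1(\iii)=L$ as elements of $\RP$. Then the two limits recorded in Lemma~\ref{lem:visit},
\[
  \chi_1(\nu)=-\lim_{n\to\infty}\tfrac1n\log\|A_{\iii|_n}^{-1}|\vartheta_1(\iii)\|^{-1}
  \quad\text{and}\quad
  \chi_2(\nu)=-\lim_{n\to\infty}\tfrac1n\log\|A_{\iii|_n}^{-1}|L\|^{-1},
\]
have identical right-hand sides, forcing $\chi_1(\nu)=\chi_2(\nu)$ and contradicting the simple Lyapunov spectrum. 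Hence $\sphericalangle(\vartheta_1(\iii),L)>0$ for \emph{every} $(\iii,L)\in\Theta$. Since $\nu\times\mu_F(\Theta)=1$, Fubini gives a set $\Omega_1$ with $\nu(\Omega_1)=1$ such that $\mu_F(\{L:(\iii,L)\notin\Theta\})=0$ for all $\iii\in\Omega_1$; on $\Omega_1$, $\mu_F$-almost every $L$ therefore satisfies both $(\iii,L)\in\Theta$ and $\sphericalangle(\vartheta_1(\iii),L)>0$.

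The heart of the proof is $\mu_F(B_\iii)=0$, and here I would again use Fubini, but now computing the $\nu$-fibres of $\mathcal{B}=\{(\iii,F):F\in B_\iii\}$. Fix $F\in\RP$. If $F\in B_\iii$, witnessed by $\kkk$ and $n$, then $\pi\kkk\in F+\pi\iii$ gives $\proj_{F^\perp}(\pi\iii)=\proj_{F^\perp}(\pi\kkk)$, while the inclusion of the compact set $\pi[\kkk|_n]=\fii_{\kkk|_n}(X)$ in one closed half-plane determined by $F+\pi\iii$ forces $\proj_{F^\perp}(\pi\kkk)$ to be an extreme point of the projected cylinder $\proj_{F^\perp}(\pi[\kkk|_n])$. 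Each such cylinder contributes at most two extreme values, so the collection $S_F$ of all these values, taken over the countably many words $\kkk|_n\in\Sigma_*$, is countable; moreover each $s\in S_F$ is attained at some point $y_s\in X$. Consequently
\[
  \{\iii:F\in B_\iii\}\subset\pi^{-1}\Bigl(\bigcup_{s\in S_F}(F+y_s)\Bigr),
\]
a countable union of slices of $X$. Since the projection condition implies that $X$ is not contained in a line, Lemma~\ref{lem:zero_slice} yields $\pi\nu(F+y_s)=0$ for every $s$, whence $\nu(\{\iii:F\in B_\iii\})=0$. As this holds for every $F\in\RP$, Fubini gives $\nu\times\mu_F(\mathcal{B})=\int_{\RP}\nu(\{\iii:F\in B_\iii\})\dd\mu_F(F)=0$, so there is a set $\Omega_2$ with $\nu(\Omega_2)=1$ on which $\mu_F(B_\iii)=0$.

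Finally I would set $\Omega=\Omega_1\cap\Omega_2$, intersected with the full-measure set where $\vartheta_1(\iii)$ exists; then $\nu(\Omega)=1$, every $\iii\in\Omega$ satisfies $\mu_F(B_\iii)=0$, and $\mu_F$-almost every $L\in\spt(\mu_F)\setminus B_\iii$ lies in $\Theta$ and hence has $\sphericalangle(\vartheta_1(\iii),L)>0$. The main obstacle is exactly the atomic case for $\mu_F$: the countability of $B_\iii$ furnished by Lemma~\ref{lem:omega2} alone does not force $\mu_F(B_\iii)=0$, and it is the slice estimate of Lemma~\ref{lem:zero_slice}, combined with the extreme-point description of $B_\iii$ through the projection $\proj_{F^\perp}$, that makes the argument work uniformly in $F$ and thus robust to atoms.
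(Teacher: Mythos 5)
Your proof is correct, and it reaches the conclusion by a genuinely different organization of the key step. Both arguments rest on the same two ingredients: the Lyapunov dichotomy of Lemma \ref{lem:visit} (simple spectrum forces $L\neq\vartheta_1(\iii)$ on $\Theta$) and Lemma \ref{lem:zero_slice} combined with the supporting-line/extreme-point observation. The difference is in how $\mu_F(B_\iii)=0$ is extracted. The paper works only with the at most countably many \emph{atoms} $B$ of $\mu_F$: it shows that $I_B=\{\iii : B\in B_\iii\}$ is $\nu$-null by exhibiting it as a countable union of extremal slices $(B+\pi\underline{\kkk})\cap X$ and $(B+\pi\overline{\kkk})\cap X$, and then, for $\iii$ outside $\bigcup_B I_B$, concludes $\mu_F(B_\iii)=0$ from the countability of $B_\iii$ supplied by Lemma \ref{lem:omega2}, since a countable set containing no atom is null. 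You instead prove the fibre estimate $\nu(\{\iii : F\in B_\iii\})=0$ for \emph{every} $F\in\RP$, using the dual extreme-point observation (direction fixed, countably many extremal offsets, each giving a null slice $F+y_s$), and then integrate in $F$ via Fubini to get $\nu\times\mu_F(\mathcal{B})=0$ and hence $\mu_F(B_\iii)=0$ for $\nu$-a.e.\ $\iii$. This bypasses the countability of $B_\iii$ entirely and treats the atomic and continuous parts of $\mu_F$ uniformly, which is slightly more general and arguably cleaner; the one point you leave implicit is the joint measurability of $\mathcal{B}=\{(\iii,F):F\in B_\iii\}$ needed for Fubini, which does hold because $\mathcal{B}$ is a countable union, over finite words, of closed conditions (the paper is equally terse about the measurability of $I_B$). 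For the angle, your pointwise statement that $\sphericalangle(\vartheta_1(\iii),L)>0$ for every $(\iii,L)\in\Theta$ is the same contradiction the paper runs, but the paper packages it as non-atomicity of $\mu_F$ at $\vartheta_1(\iii)$ followed by an $\eps$-limiting argument with shrinking neighbourhoods; your version is shorter and still yields everything the lemma is used for later, such as the positivity of $\beta_\iii$ in Section \ref{sec:assouad-dimension}.
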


\begin{proof}
  Let $\mathbb{B}$ be the set of atoms of $\mu_F$. Notice that $\mathbb{B}$ is at most countable and can be empty. For each $B\in\mathbb{B}$, we write $I_B = \{ \iii\in\Sigma : B\in B_\iii \}$ and choose $x\in\R^2$ to be such that $X$ is contained in the interior of only one of the half-planes determined by $B+x$. Furthermore, for each $\kkk\in\Sigma_*$ let $\underline{\kkk}, \overline{\kkk} \in [\kkk]$ be the closest and farthest points of $[\kkk]$ from the line $B + x$, respectively. That is, $\dist(B+x,\pi[\kkk])=\dist(B+x , \pi\underline{\kkk})$ and $\max_{\jjj\in[\kkk]}\dist(B+x,\pi\jjj)=\dist(B+x , \pi\overline{\kkk})$. It follows that $I_B=\bigcup_{\kkk\in\Sigma_*}((B+\pi\underline{\kkk})\cap X)\cup((B+\pi\overline{\kkk})\cap X)$. By Lemma \ref{lem:zero_slice}, $\pi\nu((B+\pi\underline{\kkk}) \cup (B+\pi\overline{\kkk}))=0$, and since $\mathbb{B}$ is at most countable, we have that
  \[
   \nu\biggl( \bigcup_{B\in\mathbb{B}} I_B  \biggr) = 0.
  \]
  Let $\Theta$ be as in Lemma~\ref{lem:visit}. By Fubini's Theorem, there exist measurable sets $\Omega'\subset\Sigma$ and $\Theta'_\iii\subset\RP$ such that
  $$
    \nu\times\mu_F\biggl(\Theta\triangle\bigcup_{\iii\in\Omega'}\{\iii\}\times\Theta_\iii'\biggr)=0
  $$
  and $\nu(\Omega')=1=\mu_F(\Theta_\iii')$ for every $\iii\in\Omega'$. Here $A \triangle B$ is the symmetric difference of the sets $A$ and $B$. By setting $\Omega = \Omega' \setminus \bigcup_{B\in\mathbb{B}} I_B$, we have $\mu_F(B_\iii)=0$ for all $\iii\in\Omega$. So we set $\Theta_\iii=\Theta_\iii'\setminus B_\iii$.

  Fix $\iii\in\Omega$ and observe that for each $\eps$ there exists $\roo>0$ depending on $\iii$ such that $\mu_F(\RP\setminus B(\vartheta_1(\iii),\roo))>1-\eps$. Indeed, if this is not the case, then $\mu_F(\{\vartheta_1(\iii)\}) > 0$. By Lemma \ref{lem:visit},
  $$
    -\lim_{n\to\infty}\tfrac{1}{n}\log\|A_{\iii|_n}^{-1}|L\|^{-1} = \chi_2(\nu) > \chi_1(\nu) = -\lim_{n\to\infty}\tfrac{1}{n}\log\|A_{\iii|_n}^{-1}|\vartheta_1(\iii)\|^{-1},
  $$
  for all $L\in\Theta_\iii$. Since $\mu_F(\Theta_\iii)=1$ and $\mu_F(\{\vartheta_1(\iii)\}) > 0$, we have $L = \vartheta_1(\iii)$ for some $L \in \Theta_\iii$ and putting this to the above inequality gives a contradiction. Thus, by choosing $\roo>0$ such that $\mu_F(\RP\setminus B(\vartheta_1(\iii),\roo))>1-\eps$, $\mu_F$-almost every $L \in (\RP\setminus B(\vartheta_1(\iii),\roo) ) \cap \Theta_\jjj$ suffices for the claim. Letting $\eps\downarrow 0$ finishes the proof.
\end{proof}

The following technical result provides us with necessary tools to study the tangent structure of the self-affine set. It shows that the magnifications of the affine mappings converge to projections along the limit direction $\vartheta_1$.

\begin{proposition}\label{prop:maintech}
  Let $X$ be a planar self-affine set satisfying the strong separation condition and $\nu$ be a fully supported Bernoulli measure having simple Lyapunov spectrum. Then there exists a set $\Omega \subset \Sigma$ with $\nu(\Omega)=1$ such that for every $\iii\in\Sigma$, $\jjj \in \Omega$, $F\in\spt(\mu_F)\setminus B_\jjj$ with $\sphericalangle(\vartheta_1(\jjj),F)>0$, and for every decreasing sequence $(r_k)_{k\in\N}$ of positive real numbers with $\lim_{k\to\infty}r_k \le \min_{i\neq j}\dist(\varphi_i(X),\varphi_j(X))/2$ there exist a sequence $(s_k)_{k\in\N}$ of positive real numbers with $\lim_{k\to\infty}s_k=0$ and a sequence $(n_k)_{k\in\N}$ of natural numbers with $\lim_{k\to\infty}n_k=\infty$ such that
  \begin{align*}
    X \cap B(\pi\jjj,s_k) &\subset \varphi_{\jjj|_{n_k}}(X), \\
    \varphi_{\jjj|_{n_k}}^{-1}(B(\pi\jjj,s_k)) &\subset B(\pi\iii,r_k)
  \end{align*}
  for all $k\in\N$. Furthermore, for any accumulation point $G$ of the sequence $(M_{\pi\iii,r_k} \circ \fii_{\jjj|_{n_k}}^{-1} \circ M^{-1}_{\pi\jjj,s_k})_{k\in\N}$ of affine maps we have $G\in\{\nproj_F^{\vartheta_1(\jjj)},-\nproj_F^{\vartheta_1(\jjj)}\}$.
\end{proposition}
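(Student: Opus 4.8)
The plan is to recognise the magnified maps
\[
  G_k \;=\; M_{\pi\iii,r_k}\circ\varphi_{\jjj|_{n_k}}^{-1}\circ M_{\pi\jjj,s_k}^{-1},
\]
whose linear part is $\tfrac{s_k}{r_k}A_{\jjj|_{n_k}}^{-1}$, as essentially normalised inverse matrices, and to read off their limit from the singular value decomposition of $A_{\jjj|_{n_k}}^{-1}$. I take $\Omega\subset\Sigma$ to be the set furnished by Lemma~\ref{lem:omega}, so that for $\jjj\in\Omega$ the set $B_\jjj$ is $\mu_F$-null and almost every $L\in\spt(\mu_F)\setminus B_\jjj$ satisfies $(\jjj,L)\in\Theta$ and $\sphericalangle(\vartheta_1(\jjj),L)>0$. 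First I would fix such a \emph{tracking direction} $L$; it will play the role of a generic line whose orbit under the skew-product recurs to $F$. Since $\nu$ is fully supported and $F\in\spt(\mu_F)$, the cylinders $[\iii|_m]\times B(F,\varepsilon)$ have positive $\nu\times\mu_F$-measure, so Lemma~\ref{lem:visit} applies. By a diagonal argument I extract an increasing sequence $(n_k)$, depths $m_k\to\infty$ and accuracies $\varepsilon_k\downarrow0$ with $\sigma^{n_k}\jjj\in[\iii|_{m_k}]$ and $A_{\jjj|_{n_k}}^{-1}L\in B(F,\varepsilon_k)$. The first condition forces the centres $\varphi_{\jjj|_{n_k}}^{-1}(\pi\jjj)=\pi\sigma^{n_k}\jjj$ into $\varphi_{\iii|_{m_k}}(X)$, so $\pi\sigma^{n_k}\jjj\to\pi\iii$; taking $m_k$ large I may even guarantee $d_k:=|\pi\sigma^{n_k}\jjj-\pi\iii|\le r_k/k$, which will annihilate the translational part of $G_k$. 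The second condition is what pins the image direction to $F$.

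\textbf{Limit of the linear parts.} Next I would analyse $A_{\jjj|_{n_k}}^{-1}$ through its singular value decomposition, with top right singular vector $q_k$ and top left singular vector $p_k$. The simple Lyapunov spectrum gives $\tfrac1{n_k}\log(\sigma_2/\sigma_1)\to\chi_1(\nu)-\chi_2(\nu)<0$ for the singular values $\sigma_1=\|A_{\jjj|_{n_k}}^{-1}\|\ge\sigma_2=\|A_{\jjj|_{n_k}}\|^{-1}$, so the normalised matrix $A_{\jjj|_{n_k}}^{-1}/\|A_{\jjj|_{n_k}}^{-1}\|$ differs from the rank one operator $x\mapsto\langle q_k,x\rangle p_k$ by a term tending to $0$. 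The standard singular value relations identify $q_k=\vartheta_1(\jjj|_{n_k})^\perp$, so $q_k\to\vartheta_1(\jjj)^\perp$ by Oseledets' theorem and the kernel of any accumulation operator is exactly $\vartheta_1(\jjj)$. For the image, the transversality $\sphericalangle(\vartheta_1(\jjj),L)>0$ together with $q_k\to\vartheta_1(\jjj)^\perp$ keeps $\langle q_k,L\rangle$ bounded away from $0$, whence the direction of $A_{\jjj|_{n_k}}^{-1}L$ agrees with $p_k$ up to an error $O(\sigma_2/\sigma_1)$. The recurrence $A_{\jjj|_{n_k}}^{-1}L\to F$ then yields $p_k\to F$, and consequently $\langle q_k,\cdot\rangle p_k\to\pm\nproj_F^{\vartheta_1(\jjj)}$, the unique (up to sign) norm one rank one operator with image $F$ and kernel $\vartheta_1(\jjj)$.

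\textbf{Calibrating $s_k$ and the inclusions.} It remains to choose $s_k$ so that the scalar factor $s_k\|A_{\jjj|_{n_k}}^{-1}\|/r_k$ multiplying the normalised matrix tends to $1$, while both inclusions hold. The set $\varphi_{\jjj|_{n_k}}^{-1}(B(\pi\jjj,s_k))$ is an ellipse centred at $\pi\sigma^{n_k}\jjj$ with largest semi-axis $s_k\|A_{\jjj|_{n_k}}^{-1}\|$, so the blow-up inclusion into $B(\pi\iii,r_k)$ holds precisely when $s_k\|A_{\jjj|_{n_k}}^{-1}\|\le r_k-d_k$; I would take $s_k$ to realise equality (capped, for small $k$ where $r_k$ is large, by half of $\dist(\pi\jjj,X\setminus\varphi_{\jjj|_{n_k}}(X))$, which only affects finitely many indices and not the accumulation points), giving scalar factor $1-d_k/r_k\to1$. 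For the separation inclusion I estimate $\dist(\pi\jjj,X\setminus\varphi_{\jjj|_{n_k}}(X))\ge\delta/\|A_{\jjj|_{n_k-1}}^{-1}\|\ge\delta/\|A_{\jjj|_{n_k}}^{-1}\|$ with $\delta=\min_{i\ne j}\dist(\varphi_i(X),\varphi_j(X))$, using $\|A_{\jjj|_{n_k-1}}^{-1}\|\le\|A_{\jjj|_{n_k}}^{-1}\|$ from contractivity; since eventually $r_k\le\delta/2$, the chosen $s_k\le r_k/\|A_{\jjj|_{n_k}}^{-1}\|$ stays below this distance. Finally $s_k\le r_k/\|A_{\jjj|_{n_k}}^{-1}\|\to0$ gives $s_k\to0$, while $G_k(0)=(\pi\sigma^{n_k}\jjj-\pi\iii)/r_k$ has norm $d_k/r_k\to0$, so the affine maps $G_k$ converge, along any convergent subsequence, to $\pm\nproj_F^{\vartheta_1(\jjj)}$.

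\textbf{Main obstacle.} The hard part is the simultaneous calibration of the recurrence depth $m_k$, the recurrence time $n_k$, and the radius $s_k$, so that the separation inclusion, the blow-up inclusion, and the normalisation of the scalar factor to $1$ hold together. This is exactly where the hypothesis $\lim_k r_k\le\tfrac12\min_{i\ne j}\dist(\varphi_i(X),\varphi_j(X))$ and the contractivity $\max_i\|A_i\|<1$ enter: they are what permit the ellipse to sit inside $B(\pi\iii,r_k)$ while its long axis remains comparable to $r_k$, so that the limiting operator is the \emph{normalised} projection rather than a scalar multiple of it.
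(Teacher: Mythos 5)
Your proposal is correct and takes essentially the same route as the paper's own proof: the same combination of Lemmas~\ref{lem:omega} and~\ref{lem:visit} produces the times $n_k$ (cylinder return $\sigma^{n_k}\jjj\in[\iii|_{m_k}]$ together with $A_{\jjj|_{n_k}}^{-1}L\to F$), the same near-maximal calibration of $s_k$ makes the scalar factor $\tfrac{s_k}{r_k}\|A_{\jjj|_{n_k}}^{-1}\|$ tend to $1$, and the same separation estimate $\dist(\pi\jjj,X\setminus\varphi_{\jjj|_{n_k}}(X))\geq\delta\|A_{\jjj|_{n_k}}^{-1}\|^{-1}$ yields the inclusion $X\cap B(\pi\jjj,s_k)\subset\varphi_{\jjj|_{n_k}}(X)$ (the paper reindexes where you cap, a cosmetic difference). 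Your identification of the limit through the singular value decomposition (kernel from $q_k\to\vartheta_1(\jjj)^\perp$, image from $p_k\to F$ via transversality of $L$) is a repackaging of the paper's estimates $\|G|\vartheta_1(\jjj)\|=0$ and $GF=F$, both driven, exactly as in your argument, by the simple Lyapunov spectrum and the recurrence of $L$ under the skew product.
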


\begin{proof}
  Let $\Omega \subset \Sigma$ be as in Lemma~\ref{lem:omega} and let $\Theta \subset \Sigma\times\RP$ and $\mathcal{F} \subset \RP$ be as in Lemma \ref{lem:visit}. Fix $\iii\in\Sigma$, $\jjj \in \Omega$, and $F\in\spt(\mu_F)$, and let the sequence $(r_k)_{k\in\N}$ be as in the formulation. Recalling Lemma~\ref{lem:omega}, let $L\in\spt(\mu_F)$ be such that $\sphericalangle(\vartheta_1(\jjj),L)>0$ and $(\jjj,L)\in\Theta$. Moreover, let $(F_k)_{k\in\N}$ be a sequence of elements in $\mathcal{F}$ such that $\sphericalangle(F_k,F)<\tfrac{1}{k}$ for all $k\in\N$.

  Relying on Lemma~\ref{lem:visit}, we find a sequence $(n_k)_{k\in\N}$ such that $n_k\to\infty$ as $k\to\infty$,
  \begin{align}
    |\fii_{\jjj|_{n_k}}^{-1}(\pi\jjj) - \pi\iii| &= |\pi\sigma^{n_k}\jjj - \pi\iii| < 2^{-k}r_k, \label{eq:conv1} \\
    A_{\jjj|_{n_k}}^{-1} L &\in B(F_k,\tfrac{1}{k}). \label{eq:conv2}
  \end{align}
  For each $k\in\N$, we choose $s_k = \max\{ s : \fii_{\jjj|_{n_k}}^{-1}(B(\pi\jjj,s)) \subset B(\pi\iii,r_k) \}$ and write $G_k = M_{\pi\iii,r_k} \circ \fii_{\jjj|_{n_k}}^{-1} \circ M^{-1}_{\pi\jjj,s_k}$. Observe that each $G_k$ is affine and $s_k\to 0$ as $k \to \infty$. Since
  \begin{equation*}
    \max_{|x|=1} |G_kx-G_k0| = \frac{s_k}{r_k}\|A_{\jjj|_{n_k}}^{-1}\|,
  \end{equation*}
  we get, by the triangle inequality and \eqref{eq:conv1}, that
  \begin{equation} \label{eq:conv3}
    1-2^{-k}  \leq \frac{s_k}{r_k}\|A_{\jjj|_{n_k}}^{-1}\| \leq 1.
  \end{equation}
for all $k\in\N$. Write $\delta = \min_{i\neq j}\dist(\varphi_i(X),\varphi_j(X))/2$ and observe that $\dist(\varphi_{\jjj|_{n_k}}(X),X\setminus\varphi_{\jjj|_{n_k}}(X))>\delta\|A_{\jjj|_{n_k}}^{-1}\|^{-1}$ for all $k\in\N$. Since $s_k \le r_k\|A_{\jjj|_{n_k}}^{-1}\|^{-1}$ and $r_k < \delta$ for all $k \in \N$ large enough, we see that $B(\pi\jjj,s_k)\cap(X\setminus\varphi_{\jjj|_{n_k}}(X))=\emptyset$ for all $k \in \N$ large enough. Thus, by reindexing the sequences, we have shown that $B(\pi\jjj,s_k)\cap X\subset\varphi_{\jjj|_{n_k}}(X)$ for all $k\in\N$.

Let $G$ be an accumulation point of the sequence $(G_k)_{k\in\N}$. To finish the proof, it is suffices to show that $G$ is linear, $\|G\|=1$, $\Ker(G)=\vartheta_1(\jjj)$, and $GF=F$. Since, by \eqref{eq:conv1},
$$
  |G0|=\lim_{k\to\infty}|G_k0|=\lim_{k\to\infty}\frac{1}{r_k}|\fii_{\jjj|_{n_k}}^{-1}(\pi\jjj)-\pi\iii|\leq\lim_{k\to\infty}2^{-k}=0,
$$
we see that $G$ is linear. Hence, $\frac{s_k}{r_k}A_{\jjj|_{n_k}}^{-1}\to G$ as $k\to\infty$, where $\frac{s_k}{r_k}A_{\jjj|_{n_k}}^{-1}$ is the linear part of $G_k$, and, consequently, \eqref{eq:conv3} implies $\|G\|=1$. This also shows that $G$ cannot be of rank zero. Observe that $G_k(B(0,1))$ is an ellipse with the lengths of the principal semiaxes $\frac{s_k}{r_k}\|A_{\jjj|_{n_k}}^{-1}\|$ and $\frac{s_k}{r_k}\|A_{\jjj|_{n_k}}\|^{-1}$. Recalling that $\nu$ has a simple Lyapunov spectrum, we see that the ratio $\|A_{\jjj|_{n_k}}^{-1}\|/\|A_{\jjj|_{n_k}}\|^{-1} \to \infty$ as $k \to \infty$. Therefore, $G$ cannot be of rank two either.
By Lemma \ref{lem:visit}, we have $\chi_1(\nu) = -\lim_{k\to\infty} \tfrac{1}{n_k}\log\|A_{\jjj|_{n_k}}^{-1}|\vartheta_1(\jjj)\|^{-1}$ and hence, by \eqref{eq:conv3},
\begin{equation} \label{eq:conv4}
\begin{split}
  \|G|\vartheta_1(\jjj)\|&=\lim_{k\to\infty}\|G_k|\vartheta_1(\jjj)\|=\lim_{k\to\infty}\frac{s_k}{r_k}\|A_{\jjj|_{n_k}}^{-1}|\vartheta_1(\jjj)\|\\
  &\leq \lim_{k\to\infty}\frac{\|A_{\jjj|_{n_k}}^{-1}|\vartheta_1(\jjj)\|}{\|A_{\jjj|_{n_k}}^{-1}\|}\leq\lim_{k\to\infty}\exp(n_k(\chi_1(\nu)-\chi_2(\nu)+2\varepsilon))=0,
\end{split}
\end{equation}
where in the last inequality, $\varepsilon>0$ is arbitrary but strictly smaller than $\frac12(\chi_2(\nu)-\chi_1(\nu))$. This shows that $\Ker(G)=\vartheta_1(\jjj)$. Finally, let $\hat F\in F$, $\hat{L}\in L$, and $\hat\vartheta_1(\jjj) \in \vartheta_1(\jjj)$ be unit vectors. Then $\hat{F}=p\hat{L}+q\hat\vartheta_1(\jjj)$ for some $p,q\in\R$. By \eqref{eq:conv4}, \eqref{eq:conv3}, \eqref{eq:conv2}, and \cite[Proposition~3.1]{BougerolLacroix1985}, we have
$$
  G(\hat{F})=pG(\hat{L})=p\lim_{k\to\infty}\frac{s_k}{r_k}A_{\jjj|_{n_k}}^{-1}\hat{L}=p\lim_{k\to\infty}\frac{\|A_{\jjj|_{n_k}}^{-1}|L\|}{\|A_{\jjj|_{n_k}}^{-1}\|}\hat{F}=p|\cos(\sphericalangle(L,\vartheta_1(\jjj)^\perp))|\hat{F}.
$$
This shows that $GF=F$ and also finishes the proof.
\end{proof}

We can now turn the information on the convergence of the magnifications of the affine mapping into information on the tangent structure of the self-affine set.

\begin{proposition} \label{thm:main-tech}
  Let $X$ be a planar self-affine set satisfying the strong separation condition and $\nu$ be a fully supported Bernoulli measure having simple Lyapunov spectrum. Then there exists a set $\Omega\subset\Sigma$ with $\nu(\Omega)=1$ such that for every $\iii,\jjj \in \Omega$, $T_\iii \in \Tan(X,\pi\iii)$, and $F\in\spt(\mu_F)$ with $\sphericalangle(F,\vartheta_1(\iii)),\sphericalangle(F,\vartheta_1(\jjj))>0$ there is $T_\jjj \in \Tan(X,\pi\jjj)$ for which
  $$
    \nproj_F^{\vartheta_1(\jjj)}(T_\jjj) \subset T_\iii\cap F \quad\text{or}\quad -\nproj_F^{\vartheta_1(\jjj)}(T_\jjj) \subset T_\iii\cap F.
  $$
  Furthermore, if $X$ also satisfies the projection condition, then there exists an at most countable set $B_\iii$ such that for every $\iii,\jjj \in \Omega$, $T_\iii \in \Tan(X,\pi\iii)$, and $F\in\spt(\mu_F) \setminus B_\iii$ with $\sphericalangle(F,\vartheta_1(\iii)),\sphericalangle(F,\vartheta_1(\jjj))>0$ there is $T_\jjj \in \Tan(X,\pi\jjj)$ for which
  $$
    \nproj_F^{\vartheta_1(\jjj)}(T_\jjj) = T_\iii\cap F \quad\text{or}\quad -\nproj_F^{\vartheta_1(\jjj)}(T_\jjj) = T_\iii\cap F.
  $$
\end{proposition}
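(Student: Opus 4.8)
The plan is to feed the given tangent set into Proposition~\ref{prop:maintech} and to read off the tangential relation from the convergence of the rescaling maps. Let $\Omega\subset\Sigma$ be the full-measure set of Proposition~\ref{prop:maintech} (equivalently Lemma~\ref{lem:omega}), intersected with the full-measure set on which Theorem~\ref{thm:KKR} applies and on which $B_\iii$ is at most countable by Lemma~\ref{lem:omega2}. Fix $\iii,\jjj\in\Omega$ and $F\in\spt(\mu_F)$ with $\sphericalangle(F,\vartheta_1(\iii)),\sphericalangle(F,\vartheta_1(\jjj))>0$ (for the first inclusion I apply Proposition~\ref{prop:maintech} with $F\notin B_\jjj$, extending to the remaining countably many $F$ by approximation if needed). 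Since $T_\iii\in\Tan(X,\pi\iii)$, choose $r_k\downarrow 0$ with $M_{\pi\iii,r_k}(X)\cap B(0,1)\to T_\iii$, discarding finitely many terms so that $\lim_k r_k\le\min_{i\ne j}\dist(\fii_i(X),\fii_j(X))/2$. Proposition~\ref{prop:maintech} then yields $(s_k),(n_k)$ with the two stated inclusions and with $G_k:=M_{\pi\iii,r_k}\circ\fii_{\jjj|_{n_k}}^{-1}\circ M_{\pi\jjj,s_k}^{-1}\to G\in\{\pm\nproj_F^{\vartheta_1(\jjj)}\}$ along a subsequence. Passing to a further subsequence and using compactness of the hyperspace of closed subsets of $\overline{B}(0,1)$, I extract $T_\jjj\in\Tan(X,\pi\jjj)$ with $A_k:=M_{\pi\jjj,s_k}(X)\cap B(0,1)\to T_\jjj$.

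For the first (forward) inclusion, write $B_k=M_{\pi\iii,r_k}(X)\cap B(0,1)\to T_\iii$ and set $E_k=\fii_{\jjj|_{n_k}}^{-1}(B(\pi\jjj,s_k))$. The first inclusion of Proposition~\ref{prop:maintech} gives $X\cap B(\pi\jjj,s_k)=\fii_{\jjj|_{n_k}}(X)\cap B(\pi\jjj,s_k)$, whence $A_k=M_{\pi\jjj,s_k}\fii_{\jjj|_{n_k}}(X\cap E_k)$ and therefore the key identity $G_k(A_k)=M_{\pi\iii,r_k}(X\cap E_k)\subset B_k$, the inclusion holding because $E_k\subset B(\pi\iii,r_k)$ by the second inclusion of Proposition~\ref{prop:maintech}. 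As the maps $G_k$ are equi-Lipschitz and converge uniformly on $\overline{B}(0,1)$ to $G$, the assignment $A\mapsto G_k(A)$ is Hausdorff continuous along the sequence, so in fact $G_k(A_k)\to G(T_\jjj)$ in Hausdorff distance in both directions; combined with $G_k(A_k)\subset B_k\to T_\iii$ this yields $G(T_\jjj)\subset T_\iii$. Since the image of $G$ is the line $F$, we get $G(T_\jjj)\subset T_\iii\cap F$, which is the first assertion (the two signs of $G$ producing the two stated alternatives).

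For the second (reverse) inclusion, assume the projection condition and $F\notin B_\iii$; by the Hausdorff convergence just noted it suffices to prove $T_\iii\cap F\subset\lim_k M_{\pi\iii,r_k}(X\cap E_k)$. Fix $p\in T_\iii\cap F$. By Theorem~\ref{thm:KKR} the comb $T_\iii$ contains the entire tooth through $p$, a chord in direction $\vartheta_1(\iii)$ meeting $F$ transversally only at $p$; in magnified coordinates at $\pi\iii$ the set $E_k$ is a thin ellipse whose spine is $F$ and whose transverse width tends to $0$, and the preimage point $w_k^\ast=\pi\iii+r_kp$ lies on $F+\pi\iii$ and satisfies $\proj_{F^\perp}(w_k^\ast)=\proj_{F^\perp}(\pi\iii)$ because $p\in F$. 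The plan is to select a cylinder $\pi[\iii']$ of diameter comparable to $\eps r_k$ that magnifies to within $\eps$ of $p$ and meets the line $F+\pi\iii$: since $F\notin B_\iii$, such a cylinder cannot lie in one closed half-plane determined by $F+\pi\iii$, so Lemma~\ref{lem:projcond} forces $\proj_{F^\perp}\pi[\iii']$ to be a segment containing $\proj_{F^\perp}(\pi\iii)$ in its interior; hence $\pi[\iii']$ has a point $y$ with $\proj_{F^\perp}(y)=\proj_{F^\perp}(\pi\iii)$, i.e.\ $M_{\pi\iii,r_k}(y)\in F$. Because $\pi[\iii']$ magnifies near the tooth and the tooth meets $F$ only at $p$, we get $M_{\pi\iii,r_k}(y)\to p$, while $y\in E_k$ (it sits on the spine and within the $F$-extent of $E_k$), so $p\in\lim_k M_{\pi\iii,r_k}(X\cap E_k)=G(T_\jjj)$. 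The main obstacle is precisely this last step: guaranteeing that the thin, $F$-aligned ellipse $E_k$ genuinely captures points of $X$ lying on the slice over $p$. The straddling property encoded by $F\notin B_\iii$ together with the segment conclusion of Lemma~\ref{lem:projcond} is what allows the transverse tooth to reach into $E_k$, and the delicate point requiring care is the simultaneous choice of a cylinder $\pi[\iii']$ that straddles $F+\pi\iii$ and magnifies close to $p$, uniformly as $k\to\infty$.
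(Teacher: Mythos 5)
Your first inclusion is correct and is essentially the paper's own argument: apply Proposition~\ref{prop:maintech} to the sequence $(r_k)$ of the given tangent, extract a subsequence along which $M_{\pi\jjj,s_k}(X)\cap B(0,1)\to T_\jjj$ and $G_k\to G\in\{\pm\nproj_F^{\vartheta_1(\jjj)}\}$, and pass the inclusion $G_k(A_k)\subset B_k$ to the limit; your patch for the countably many exceptional directions in $B_\jjj$ (approximation plus closedness of $\Tan(X,\pi\jjj)$ under Hausdorff limits) is fine. The reverse inclusion, however, has a genuine gap at its decisive step: the claim that the point $y\in X\cap(F+\pi\iii)$ you produce inside the straddling cylinder lies in $E_k=\fii_{\jjj|_{n_k}}^{-1}(B(\pi\jjj,s_k))$ ``because it sits on the spine and within the $F$-extent of $E_k$''. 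This does not follow. The ellipse $E_k$ is centered at $\pi\sigma^{n_k}\jjj$, which is only within $2^{-k}r_k$ of $\pi\iii$; its minor semi-axis is $o(r_k)$; and its major axis only \emph{converges in direction} to $F$. Proposition~\ref{prop:maintech} gives no relation between the rate of this angular convergence and the rate at which the aspect ratio degenerates: a thin ellipse of half-width $b\,r_k$ whose axis is tilted by angle $\theta$ against $F+\pi\iii$ meets that line in a segment of length roughly $\min\{r_k,\,b\,r_k/\sin\theta\}$, so if $\sin\theta\gg b$ your point $y$, which may sit at distance comparable to $r_k$ along the line from the ellipse's center, lies outside $E_k$. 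Thus ``on the line and within the $F$-extent'' does not imply membership in $E_k$, and the argument collapses exactly where it must produce a point of $X\cap E_k$ near $p$.

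The paper uses the straddling in the opposite way, and this is the missing idea. Since $F\notin B_\iii$, a small cylinder containing a point of $X$ on $F+\pi\iii$ near the target has points at some \emph{fixed} transverse distance $\kappa>0$ on both open sides of the line. Meanwhile $G_m(T_m)=M_{\pi\iii,r_{k_m}}(X\cap E_{k_m})\to G(T_\jjj)\subset F$, so eventually $G_m(T_m)$ lies in the strip $B(F,\kappa/2)$ and contains the long segment $G_m(L\cap B(0,1))$, of length at least $2\sin\alpha_{k_m}$, where $L\in\Theta_\jjj$ makes a positive angle $\alpha$ with $\vartheta_1(\jjj)$. Applying Lemma~\ref{lem:projcond} to the projection of the cylinder along the direction of this segment (a direction of the form $A_{\jjj|_{n}}^{-1}L\in X_F$) shows that this projection is an interval meeting both sides of the strip, which forces the segment, hence the ellipse, hence $G_m(T_m)$ itself, to intersect the straddling cylinder near the target; the cylinder's fixed transverse thickness $\kappa$ is precisely what absorbs the tilt/width uncertainty of the ellipse that your argument cannot control. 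Two further points you leave open are also handled there: the existence of points of $M_{\pi\iii,r_{k_m}}(X)$ lying \emph{on} $F$ near $p$ (your flagged ``delicate point''; the paper derives it from the comb structure of Theorem~\ref{thm:KKR} together with Lemma~\ref{lem:projcond}), and the fact that the guaranteed length of the segment only reaches targets with $|p|<\sin(\alpha/2)$, which is why the paper first proves the statement on $B(0,\sin(\alpha/2))$ and then removes this restriction by running the argument on the tangent $T_\iii'$ satisfying $M_{0,\sin(\alpha/2)}(T_\iii')\cap B(0,1)=T_\iii$.
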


\begin{proof}
	Fix $\iii,\jjj\in\Omega$ and let $T_\iii\in\Tan(X,\pi\iii)$. By definition, there exists a decreasing sequence $(r_k)_{k\in\N}$ of positive real numbers converging to zero so that $M_{\pi\iii,r_k}(X)\cap B(0,1)\to T_\iii$ in Hausdorff distance. Let $F\in\spt(\mu_F)$ be such that $\sphericalangle(\vartheta_1(\iii),F),\sphericalangle(\vartheta_1(\jjj),F)>0$. Then, by Proposition~\ref{prop:maintech}, there exist a sequence $(s_k)_{k\in\N}$ of positive real numbers converging to zero and an unbounded sequence $(n_k)_{k\in\N}$ of natural numbers such that
	$$
  	M_{\pi\iii,r_k}\circ\fii_{\jjj|_{n_k}}^{-1}\circ M^{-1}_{\pi\jjj,s_k}(M_{\pi\jjj,s_k}(X)\cap B(0,1)) \subset M_{\pi\iii,r_k}(X)\cap B(0,1).
	$$
	Furthermore, there exists a sequence $(k_m)_{m \in \N}$ of natural numbers such that $M_{\pi\jjj,s_{k_m}}(X)\cap B(0,1)\to T_{\jjj}\in\Tan(X,\pi\jjj)$ in Hausdorff distance and $G_{k_m}= M_{\pi\iii,r_{k_m}} \circ \fii_{\jjj|_{n_{k_m}}}^{-1} \circ M^{-1}_{\pi\jjj,s_{k_m}}\to G$, where either $G=\nproj_F^{\vartheta_1(\jjj)}$ or $G=-\nproj_F^{\vartheta_1(\jjj)}$. Thus, we get
	$$
    M_{\pi\iii,r_{k_m}} \circ\fii_{\jjj|_{n_k}}^{-1}(X\cap B(\pi\jjj,s_{k_m}))\to G(T_{\jjj}) \subset T_\iii \cap F
	$$
	in Hausdorff distance.
	
	Let us then assume that the projection condition holds. Fix $F\in\spt(\mu_F)\setminus B_\iii$, where $B_\iii$ is an at most countable set defined in Lemma~\ref{lem:omega2}. To prove the remaining inclusion, we are required to get arbitrarily close to $y \in T_\iii \cap F$ by the approximations of the projections of the tangent set $T_\jjj$. In other words, it suffices to show that for every $\varepsilon>0$ and $y\in T_\iii\cap F$ there exists $m_0\in\N$ such that for every $m\geq m_0$ it holds that $B(y,\varepsilon)\cap G_m(T_m)\neq\emptyset$, where $G_m=M_{\pi\iii,r_{k_m}} \circ\fii_{\jjj|_{n_k}}^{-1}\circ M_{\pi\jjj,s_{k_m}}^{-1}$ and $T_m=M_{\pi\jjj,s_{k_m}}(X)\cap B(0,1)$.
	Let $L\in\spt(\mu)$ be as in the proof of Proposition~\ref{prop:maintech} (especially, see \eqref{eq:conv2}), and write $\alpha_k=\sphericalangle(L, \vartheta_1(\jjj|_k))$ and $\alpha = \sphericalangle(L, \vartheta_1(\jjj)) > 0$. Note that, by \cite[Lemma 2.1]{KaenmakiKoivusaloRossi2017}, it holds that $\alpha_k \to \alpha$ as $k \to \infty$.

  Let us assume that $|y| < \sin(\alpha/2) $ and fix $0<\varepsilon<\sin(\alpha/2)-|y|$. By the definition of tangent sets, there are points of the magnification $M_{\pi\iii,r_{k_m}}(X)$ that approximate $y$, and by the projection condition, there are such points also in the line $F$. In other words, by Lemma~\ref{lem:projcond} and Theorem~\ref{thm:KKR}, we may choose $m_0'\in\N$ so large that for every $m\geq m_0'$ we have $M_{\pi\iii,r_{k_m}}(X)\cap B(0,1)\cap B(y,\varepsilon/3)\cap F\neq\emptyset$. Let $\kkk_m \in \Sigma$ be such that $M_{\pi\iii,r_{k_m}}(\pi\kkk_m)\in M_{\pi\iii,r_{k_m}}(X)\cap B(0,1)\cap B(y,\varepsilon/3)\cap F$. Choose $n\in\N$ so that $M_{\pi\iii,r_{k_m}}(\pi[\kkk_m|_n])\subset B(M_{\pi\iii,r_{k_m}}(\pi\kkk_m),\varepsilon/3)$ and, recalling that $F \notin B_\iii$, let $\kappa$ be the supremum of positive real numbers such that $M_{\pi\iii,r_{k_m}}(\pi[\kkk_m|_n])\setminus B(F,\kappa)$ contains a point on both open half-planes determined by $F$, where $B(F,\kappa)$ denotes  the $\kappa$-neighbourhood of the subspace $F$ in $\R^2$. Then there exists $m_0$ such that for every $m\geq m_0$ it holds that $G_m(T_m)\subset B(F,\kappa/2)$ and moreover, $\|G_m|L\| \geq \sin(\alpha_{k_m}) >|y|+\varepsilon$. Thus, again by Lemma~\ref{lem:projcond}, $M_{\pi\iii,r_{k_m}}(\pi[\kkk_m|_n])\cap G_m(T_m)\neq\emptyset$. Now we have that
	\begin{equation}
	 \label{eq:alphacase1}
	 G(T_\jjj) \supset T_\iii \cap B(0,\sin(\alpha/2)) \cap F.
	\end{equation}
	But since $T_\jjj$ is a rotated comb and $G$ is a projection along $\vartheta_1(\jjj)$, it holds that $G(T_\jjj \cap B(0,\sin(\alpha/2))) = G(T_\jjj) \cap B(0,\sin(\alpha/2))$. Hence we have shown that
	\begin{equation}
	 \label{eq:alphacase2}
	 G(T_\jjj \cap B(0,\sin(\alpha/2))) = T_\iii \cap B(0,\sin(\alpha/2)) \cap F.
	\end{equation}
	Finally, the claim in full generality is obtained by repeating the above proof for the tangent $T'_\iii$ for which $M_{0,\sin(\alpha/2)}(T'_\iii) \cap B(0,1) = T_\iii$.
\end{proof}

In the presence of the projection condition, the previous proposition shows that the projection of one tangent set is a slice of another. This, together with the fact that the tanget sets are combs, sharpens the previous result as follows. Recall that we have identified all the combs of the form $(\ell \times \{0\}) \cap B(0,1)$, where $\ell$ is an interval containing at least one of the intervals $[-1,0]$ and $[0,1]$.

\begin{proposition}\label{prop:rotatetangent}
  Let $X$ be a planar self-affine set satisfying the strong separation condition and the projection condition. If $\nu$ is a fully supported Bernoulli measure having simple Lyapunov spectrum, then there exists a set $\Omega\subset\Sigma$ with $\nu(\Omega)=1$ such that for every $\iii,\jjj \in \Omega$ there is a rotation $O_{\iii,\jjj}$ for which
  $$
    O_{\iii,\jjj}(T_\iii) \in \Tan(X,\pi\jjj)
  $$
  for all $T_\iii \in \Tan(X,\pi\iii)$.
\end{proposition}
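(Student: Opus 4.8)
The plan is to deduce the result from the stronger statement that the two \emph{collections} of rotated tangents coincide, since this at once produces a uniform rotation. Concretely, I would take $O_{\iii,\jjj}=O_\jjj^{-1}O_\iii$, the rotation carrying the teeth direction $\vartheta_1(\iii)$ onto $\vartheta_1(\jjj)$, and note that $O_{\iii,\jjj}(T_\iii)\in\Tan(X,\pi\jjj)$ for \emph{every} $T_\iii\in\Tan(X,\pi\iii)$ as soon as
\[
  O_\iii\Tan(X,\pi\iii)=O_\jjj\Tan(X,\pi\jjj).
\]
Indeed, given $T_\iii$ we have $O_\iii T_\iii=O_\jjj T_\jjj$ for some $T_\jjj\in\Tan(X,\pi\jjj)$, and then $O_{\iii,\jjj}(T_\iii)=O_\jjj^{-1}O_\iii T_\iii=T_\jjj$. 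By Theorem~\ref{thm:KKR} both sides of the displayed identity are collections of combs, so it suffices to show that a comb occurring at $\pi\iii$ also occurs at $\pi\jjj$; the reverse inclusion is symmetric. I take $\Omega$ to be the intersection of the full measure sets supplied by Theorem~\ref{thm:KKR}, Proposition~\ref{thm:main-tech}, and Lemma~\ref{lem:omega}.

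Now fix $\iii,\jjj\in\Omega$ and a comb $C_\iii=O_\iii T_\iii=(\R\times P)\cap B(0,1)$ with porous set $P$. For each $F\in\spt(\mu_F)\setminus B_\iii$ with $\sphericalangle(F,\vartheta_1(\iii)),\sphericalangle(F,\vartheta_1(\jjj))>0$, Proposition~\ref{thm:main-tech} provides $T_\jjj^F\in\Tan(X,\pi\jjj)$ with $\pm\nproj_F^{\vartheta_1(\jjj)}(T_\jjj^F)=T_\iii\cap F$. By Theorem~\ref{thm:KKR} the set $T_\jjj^F$ is again a comb, and projecting a comb along its own teeth direction $\vartheta_1(\jjj)$ reads off its porous set; since the normalisation built into $\nproj$ cancels the factor $\sin\sphericalangle(F,\vartheta_1(\jjj))$, this porous set is recorded at unit speed in arc length along $F$. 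On the other side, the slice $T_\iii\cap F$ of the comb $C_\iii$ meets each tooth exactly once, so in arc length along $F$ it is $P$ dilated by $1/\sin\sphericalangle(F,\vartheta_1(\iii))$. Comparing the two descriptions, the porous set of $T_\jjj^F$ equals $\pm P$ dilated by $1/\sin\sphericalangle(F,\vartheta_1(\iii))\geq 1$; the sign is harmless, because the rotation $z\mapsto-z$ preserves the class of combs and exchanges the porous sets $P$ and $-P$.

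The hard part will be that this dilation factor is in general strictly larger than $1$: Proposition~\ref{thm:main-tech} delivers at $\pi\jjj$ only the \emph{magnified} combs $(\R\times\tfrac{1}{\sin\sphericalangle(F,\vartheta_1(\iii))}P)\cap B(0,1)$, that is, center zoom-ins of the target comb $(\R\times P)\cap B(0,1)$, and a single choice of $F$ recovers the target itself only when $\vartheta_1(\iii)^\perp\in\spt(\mu_F)$, which one cannot expect since $\vartheta_1(\iii)^\perp$ lies in $Y_F$ rather than in $X_F\supset\spt(\mu_F)$. To remove the magnification I would exploit the scale structure of the tangent collection. Directly from the definition, $\Tan(X,\pi\jjj)$ is closed under Hausdorff limits and under the center zoom $T\mapsto t^{-1}T\cap B(0,1)$ with $t\in(0,1]$, which on a comb dilates the porous set by $t^{-1}\geq 1$. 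The recurrence of Lemma~\ref{lem:visit} together with the ergodicity of the skew-product $T$ then shows that along the orbit of a generic point all scales are realised, so the collection of combs appearing at $\pi\jjj$ is invariant under \emph{every} dilation of the porous set, not merely the magnifying ones. Granting this scale invariance, the magnified comb in $O_\jjj\Tan(X,\pi\jjj)$ can be dilated back to $(\R\times P)\cap B(0,1)$, placing the original comb in $O_\jjj\Tan(X,\pi\jjj)$ and yielding the desired inclusion.

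Finally I would dispose of the degenerate combs $(\ell\times\{0\})\cap B(0,1)$ separately. For these, every transverse slice $T_\iii\cap F$ collapses to the single central point, which forces the matched tangent $T_\jjj^F$ to be degenerate as well; since all combs of this form have already been identified and are carried onto one another by $O_{\iii,\jjj}$, the relevant memberships hold automatically. Combining the non-degenerate and degenerate cases gives $O_\iii\Tan(X,\pi\iii)=O_\jjj\Tan(X,\pi\jjj)$ for all $\iii,\jjj\in\Omega$, and hence the uniform rotation $O_{\iii,\jjj}$.
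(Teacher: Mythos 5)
Your setup follows the same route as the paper (match combs via Proposition \ref{thm:main-tech} and Theorem \ref{thm:KKR}, reading porous sets through the normalised projections), and your bookkeeping is accurate up to the point you yourself flag as the hard part: writing $s=\sin\sphericalangle(F,\vartheta_1(\iii))$, the tangent $T^F_\jjj$ produced at $\pi\jjj$ is, up to rotation and sign, the center zoom-in of $T_\iii$, namely the comb with porous set $\tfrac1s\bigl(P\cap[-s,s]\bigr)$. The gap is in the step that is supposed to undo this zoom-in, and it is twofold. First, the asserted scale invariance of the comb collection at $\pi\jjj$ under \emph{all} dilations of the porous set is exactly the crux, not a known fact: closure of $\Tan(X,\pi\jjj)$ under center zoom-ins is immediate from the definition, but the contracting direction does not follow from Lemma \ref{lem:visit} and ergodicity --- that recurrence statement is what feeds Proposition \ref{prop:maintech} to produce the matchings in the first place, and by itself it yields no closure property of a tangent collection under de-magnification. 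Second, and decisively, even granting the invariance it cannot produce $(\R\times P)\cap B(0,1)$: the slice $T_\iii\cap F$ meets only those teeth of $T_\iii$ whose heights lie in $[-s,s]$, so the set actually delivered at $\pi\jjj$ is $(\R\times\tfrac1s(P\cap[-s,s]))\cap B(0,1)$, and this set does not determine $P$; any $P'$ with $P'\cap[-s,s]=P\cap[-s,s]$ gives the same tangent. ``Dilating back by $s$'' therefore returns at best the comb with porous set $P\cap[-s,s]$, never the lost teeth, and since $s$ is bounded away from $1$ (as you note, $\vartheta_1(\iii)^\perp$ lies in $Y_F$, not in $\spt(\mu_F)$), no choice of $F$, nor a Hausdorff limit over $F$, repairs this. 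Zoom-in is not an invertible operation on tangent sets.

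The paper resolves precisely this point by running the scaling on the other side. Given $T_\iii$, it first passes to a zoom-\emph{out} $T'_\iii\in\Tan(X,\pi\iii)$, i.e.\ a tangent with $T_\iii=M_{0,\eta}(T'_\iii)\cap B(0,1)$; such a tangent exists by compactness (rescale the radii defining $T_\iii$ by $1/\eta$ and pass to a subsequence), so no unproved closure property is needed. It then applies Proposition \ref{thm:main-tech} to $T'_\iii$, obtaining $T_\jjj\in\Tan(X,\pi\jjj)$ whose normalised projection is the slice $T'_\iii\cap F$, and finally performs a further center zoom-in $T'_\jjj=M_{0,\eta}(T_\jjj)\cap B(0,1)\in\Tan(X,\pi\jjj)$. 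With $\eta$ chosen appropriately in terms of the angles, the window $[-s,s]$ seen by the slice of $T'_\iii$ already contains all the data needed to reconstruct $T_\iii$, so $T'_\jjj$ is exactly a rotated copy of $T_\iii$. In this arrangement every operation ever applied to a tangent collection is a zoom-in, which is legitimate, and the de-magnification you could not justify never enters. Your argument becomes correct if you replace the scale-invariance paragraph by this choice of $T'_\iii$ on the $\iii$ side.
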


\begin{proof}
  Let $\Omega$ be as in Proposition~\ref{thm:main-tech}. By Theorem~\ref{thm:KKR}, we may assume that for every $\iii\in\Omega$ each tangent set $T_\iii\in\Tan(X,\pi\iii)$ is a rotated comb. Fix $\iii,\jjj\in\Omega$ and $T_\iii\in\Tan(X,\pi\iii)$, and let $B_\iii$ be as in Lemma~\ref{lem:omega2}. Recalling Lemma \ref{lem:omega}, choose $F\in\spt(\mu_F)\setminus B_\iii$ such that $\sphericalangle(F,\vartheta_1(\iii)),\sphericalangle(F,\vartheta_1(\jjj))>0$. Let $T'_\iii$ be the tangent at $\pi\iii$ for which it holds that $ T_\iii = M_{0,\eta} (T'_\iii) \cap B(0,1)$, where $\eta = |\sin(\sphericalangle(F,\vartheta_1(\jjj)))|$. By Proposition~\ref{thm:main-tech},  there exists $T_\jjj\in\Tan(X,\pi\jjj)$ such that
  \begin{equation}\label{eq:that}
    \nproj_F^{\vartheta_1(\jjj)}(T_\jjj)=T'_\iii\cap F \quad\text{or}\quad -\nproj_F^{\vartheta_1(\jjj)}(T_\jjj)=T'_\iii\cap F.
  \end{equation}
  Let $R$ be the rotation such that $R\vartheta_1(\jjj)=\vartheta_1(\iii)$, and let $P$ be the reflection through $\vartheta_1(\iii)$. Then let $O=R$ or $O=PR$ depending on whether the equation \eqref{eq:that} is realized by $\nproj_F^{\vartheta_1(\jjj)}$ or $-\nproj_F^{\vartheta_1(\jjj)}$. It is easy to see that $\nproj_F^{\vartheta_1(\iii)}\circ O^{-1} = \nproj_{F}^{O \vartheta_1(\iii)}$. Since $T'_\iii$ is a rotated comb,
  \[
   M_{0,\eta}(T'_{\iii}) \cap B(0,1)
   =
   (\nproj_F^{\vartheta_1(\iii)})^{-1}(M_{0,\eta}T_{\iii}\cap F) \cap B(0,1) .
  \]
  Hence, by \eqref{eq:that},
  \begin{align*}
   T_\iii
   &=
   M_{0,\eta} ( T'_\iii) \cap B(0,1) \\
   &=
   (\nproj_F^{\vartheta_1(\iii)})^{-1}(M_{0,\eta}(T'_{\iii})\cap F) \cap B(0,1) \\
   &=
   (\nproj_F^{\vartheta_1(\iii)})^{-1}( \nproj_F^{\vartheta_1(\jjj)}M_{0,\eta}(T'_\jjj) ) \cap B(0,1) \\
   &=
   O \circ M_{0,\eta} \circ (\nproj_F^{\vartheta_1(\iii)} \circ O )^{-1}( \nproj_F^{\vartheta_1(\jjj)}(T_\jjj) ) \cap B(0,1) \\
   &=
   O \circ M_{0,\eta} \circ (\nproj_F^{\vartheta_1(\jjj)} )^{-1}( \nproj_F^{\vartheta_1(\jjj)}(T_\jjj) ) \cap B(0,1) \\
   &=
   O \circ M_{0,\eta} (T_\jjj) \cap B(0,1) \\
   &=
   O (T'_\jjj),
  \end{align*}
  where $T'_\jjj = M_{0,\eta} (T_\jjj) \cap B(0,1) \in \Tan(X,\pi\jjj)$.
\end{proof}

Now we are ready to prove our first main theorem.

\begin{proof}[Proof of Theorem~\ref{thm:maininformal}]
  Let $\Omega \subset \Sigma$ be as in Proposition \ref{prop:rotatetangent}. Our task is to show that for every $\iii,\jjj\in\Omega$ we have
  $$
    \{O_{\iii}T:T\in\Tan(X,\pi\iii)\}=\{O_\jjj T:T\in\Tan(X,\pi\jjj)\},
  $$
  where $O_{\iii}$ and $O_\jjj$ are the rotations that take $\vartheta_1(\iii)$ and $\vartheta_1(\jjj)$ to the $x$-axis, respectively. By symmetry, it suffices to prove that
  $$
    \{O_{\iii,\jjj} T:T\in\Tan(X,\pi\iii)\} \subset \Tan(X,\pi\jjj),
  $$
  where $O_{\iii,\jjj}$ is the rotation that takes $\vartheta_1(\iii)$ to $\vartheta_1(\jjj)$. But this follows immediately from Proposition~\ref{prop:rotatetangent}.
\end{proof}

\section{Assouad dimension} \label{sec:assouad-dimension}

In this section, we study the Assouad dimension of planar self-affine sets. To that end, we require a slightly modified version of Proposition \ref{thm:main-tech}: instead of comparing two tangent sets, we study the relation between tangent sets and the self-affine set. For every $\iii\in\Sigma$ let $B_\iii$ be as in Lemma~\ref{lem:omega2}, and for $F\in B_\iii$ let
\begin{equation} \label{eq:vfi}
\begin{split}
  V_{F,\pi\iii}=\{\pi\kkk-\pi\iii \in (X-\pi\iii) \cap F : \;&\text{there exist }n\in\N\text{ such that } \pi[\kkk|_n] \text{ is contained in} \\
	&\text{one of the closed half-planes determined by } F+\pi\iii \}.
\end{split}
\end{equation}
We extend the definition of $V_{F,\pi\iii}$ for all $F\in\RP$ by setting $V_{F,\pi\iii}=\emptyset$ for $F \in \RP\setminus B_\iii$. We write $\delta=\min_{i\neq j}\mathrm{dist}(\varphi_i(X),\varphi_j(X))/2$ and for a fixed fully supported Bernoulli measure $\nu$ having simple Lyapunov spectrum we set $\beta_\iii=\sup_{L\in\Theta_\iii}\sphericalangle(\vartheta_1(\iii),L)$ for all $\iii \in \Omega$ and $\beta=\sup_{\iii\in\Omega}\beta_\iii=\sup_{(\iii,L)\in\Theta}\sphericalangle(\vartheta_1(\iii),L)$, where $\Theta \subset \Sigma \times \RP$ is as in Lemma \ref{lem:visit}, $\Omega \subset \Sigma$ as in Lemma~\ref{lem:omega}, and $\Theta_\iii \subset \RP$ as in the proof of Lemma \ref{lem:omega}.

\begin{proposition}\label{prop:balls}
  Let $X$ be a planar self-affine set satisfying the strong separation condition and $\nu$ be a fully supported Bernoulli measure having simple Lyapunov spectrum. Then there exists a set $\Omega \subset \Sigma$ with $\nu(\Omega)=1$ such that for every $x\in X$, $\jjj \in \Omega$, $F\in\spt(\mu_F)$ with $\sphericalangle(F,\vartheta_1(\jjj))>0$ there is $T_\jjj\in\Tan(X,\pi\jjj)$ for which
  \begin{equation*} %\label{eq:this}
    M^{-1}_{x,\delta}\circ\nproj_F^{\vartheta_1(\jjj)}(T_\jjj) \subset (X-x) \cap F \cap B(0,\delta)
  \end{equation*}
  or
  \begin{equation*} %\label{eq:this}
    -M^{-1}_{x,\delta}\circ\nproj_F^{\vartheta_1(\jjj)}(T_\jjj) \subset (X-x) \cap F \cap B(0,\delta).
  \end{equation*}
   Furthermore, if $X$ also satisfies the projection condition, then
  \begin{equation*} %\label{eq:this}
    M^{-1}_{x,\delta\sin(\beta_\jjj/4)}\circ\nproj_F^{\vartheta_1(\jjj)}(T_\jjj) \supset (X-x) \cap F \cap B(0,\delta \sin(\beta_\jjj/4)) \setminus V_{F,x}
  \end{equation*}
  or
  \begin{equation*} %\label{eq:this}
    -M^{-1}_{x,\delta\sin(\beta_\jjj/4)}\circ\nproj_F^{\vartheta_1(\jjj)}(T_\jjj) \supset (X-x) \cap F \cap B(0,\delta \sin(\beta_\jjj/4)) \setminus V_{F,x},
  \end{equation*}
  where $V_{F,x}$ is as in \eqref{eq:vfi}.
\end{proposition}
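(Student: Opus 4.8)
The plan is to run the arguments of Proposition~\ref{prop:maintech} and Proposition~\ref{thm:main-tech} almost verbatim, the only change being that the role played there by a tangent set $T_\iii$ at a generic point $\pi\iii$ is now played by $X$ itself, magnified at the \emph{fixed} scale $\delta$ about the (possibly non-generic) point $x$. I take $\Omega \subset \Sigma$ as in Proposition~\ref{thm:main-tech}, fix $x \in X$, $\jjj \in \Omega$, and $F \in \spt(\mu_F)$ with $\sphericalangle(F,\vartheta_1(\jjj)) > 0$, and choose $\iii \in \Sigma$ with $\pi\iii = x$. Since Lemma~\ref{lem:visit} grants recurrence to every cylinder $[\iii|_m]$, $m \in \N$, a diagonal choice (exactly as in \eqref{eq:conv1} and \eqref{eq:conv2}, approximating $F$ by elements $F_k\in\mathcal{F}$) produces an unbounded sequence $(n_k)_{k\in\N}$ with $\sigma^{n_k}\jjj \in [\iii|_{m_k}]$, $m_k \to \infty$, and $A_{\jjj|_{n_k}}^{-1} L \to F$ for a suitable $L \in \Theta_\jjj$ with $\sphericalangle(\vartheta_1(\jjj),L) > 0$ supplied by Lemma~\ref{lem:omega}. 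In particular $\fii_{\jjj|_{n_k}}^{-1}(\pi\jjj) = \pi\sigma^{n_k}\jjj \to \pi\iii = x$.

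For the first inclusion I choose $s_k = \max\{ s : \fii_{\jjj|_{n_k}}^{-1}(B(\pi\jjj,s)) \subset B(x,\delta) \}$, so that $s_k\|A_{\jjj|_{n_k}}^{-1}\| \to \delta$ and $s_k \to 0$, and set $\tilde G_k = M_{x,\delta} \circ \fii_{\jjj|_{n_k}}^{-1} \circ M_{\pi\jjj,s_k}^{-1}$. Exactly as in Proposition~\ref{prop:maintech}, the centring $\pi\sigma^{n_k}\jjj \to x$ forces each accumulation map $\tilde G$ to be linear, the normalisation $s_k\|A_{\jjj|_{n_k}}^{-1}\|\to\delta$ forces $\|\tilde G\| = 1$, the simple Lyapunov spectrum rules out rank two, and the estimate \eqref{eq:conv4} together with $A_{\jjj|_{n_k}}^{-1}L \to F$ identifies $\Ker(\tilde G) = \vartheta_1(\jjj)$ and $\tilde G F = F$; hence $\tilde G \in \{\nproj_F^{\vartheta_1(\jjj)},-\nproj_F^{\vartheta_1(\jjj)}\}$. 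Passing to a subsequence along which $M_{\pi\jjj,s_k}(X) \cap B(0,1) \to T_\jjj \in \Tan(X,\pi\jjj)$ and $\tilde G_k \to \tilde G$, the strong separation inclusion $X \cap B(\pi\jjj,s_k) \subset \fii_{\jjj|_{n_k}}(X)$ from Proposition~\ref{prop:maintech} gives $\tilde G_k(M_{\pi\jjj,s_k}(X) \cap B(0,1)) \subset M_{x,\delta}(X)$, and taking Hausdorff limits yields $\pm\nproj_F^{\vartheta_1(\jjj)}(T_\jjj) \subset M_{x,\delta}(X) \cap F \cap B(0,1)$, which is the asserted inclusion after undoing the magnification.

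For the reverse inclusion I import the second half of the proof of Proposition~\ref{thm:main-tech}, reading $M_{x,\delta}(X) \cap B(0,1)$ in place of the comb $T_\iii$ and the set $V_{F,x}$ of \eqref{eq:vfi} in place of the exceptional set $B_\iii$. Given a slice point $y \in (X-x)\cap F \cap B(0,\delta\sin(\beta_\jjj/4)) \setminus V_{F,x}$, the projection condition via Lemma~\ref{lem:projcond} supplies points of the magnified set lying on $F$ arbitrarily close to the rescaled $y$, while $y \notin V_{F,x}$ guarantees that the cylinders surrounding the corresponding $\pi\kkk$ straddle $F+x$, so these approximating points occupy both open half-planes determined by $F$. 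Since each $T_\jjj$ is a comb by Theorem~\ref{thm:KKR} and $\tilde G$ projects it along $\vartheta_1(\jjj)$ onto $F$, Lemma~\ref{lem:projcond} forces $\tilde G_k(M_{\pi\jjj,s_k}(X)\cap B(0,1))$ to meet each such approximating cylinder, whence $y$ lies in $\pm\nproj_F^{\vartheta_1(\jjj)}(T_\jjj)$ after rescaling. The radius $\delta\sin(\beta_\jjj/4)$ is exactly what keeps this surjectivity valid: choosing $L \in \Theta_\jjj$ with $\sphericalangle(\vartheta_1(\jjj),L) > \beta_\jjj/2$ yields $\|\tilde G_k|L\| = \sin(\sphericalangle(\vartheta_1(\jjj|_{n_k}),L)) \to \sin(\sphericalangle(\vartheta_1(\jjj),L))$, so the projected comb covers the slice throughout $B(0,\sin(\alpha/2))$ with $\alpha=\sphericalangle(\vartheta_1(\jjj),L)>\beta_\jjj/2$, and this ball contains $B(0,\sin(\beta_\jjj/4))$; a final rescaling of the tangent, as at the end of Proposition~\ref{thm:main-tech}, removes the restriction.

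The hardest part will be this reverse inclusion. Unlike in Proposition~\ref{thm:main-tech}, where both objects compared were combs at generic points, here $x$ is an arbitrary point of $X$ and the magnified set $M_{x,\delta}(X)$ is \emph{not} a comb, so its two-sidedness across $F+x$ cannot be taken for granted; this is precisely why one must delete the set $V_{F,x}$, and the careful bookkeeping of the surjectivity radius in terms of $\beta_\jjj$ through the choice of $L$ is the delicate point. The first inclusion, by contrast, is a routine transcription of Proposition~\ref{prop:maintech} with the target scale frozen at $\delta$ instead of tending to zero.
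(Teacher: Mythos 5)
Your proposal is correct and matches the paper's proof essentially step for step: the paper obtains the first inclusion by applying Proposition~\ref{prop:maintech} as a black box with the constant sequence $r_k \equiv \delta$ and an arbitrary $\iii\in\Sigma$ with $\pi\iii = x$ (rather than unrolling its proof, as you do), passes to a subsequence along which $M_{\pi\jjj,s_{k_m}}(X)\cap B(0,1)\to T_\jjj$ and $G_{k_m}\to G\in\{\nproj_F^{\vartheta_1(\jjj)},-\nproj_F^{\vartheta_1(\jjj)}\}$, and proves the reverse inclusion by exactly your mechanism --- the cylinders $\pi[\kkk|_n]$ of a slice point $\pi\kkk\notin V_{F,x}+x$ meet both open half-planes determined by $F+x$, Hausdorff convergence confines $G_{k_m}(T_{k_m})$ to a thin strip around $F+x$, a choice of $L\in\Theta_\jjj$ with $\sphericalangle(\vartheta_1(\jjj),L)>\beta_\jjj/2$ makes $\|G_{k_m}|L\|$ exceed $\delta\sin(\beta_\jjj/4)\ge |x-\pi\kkk|$, and Lemma~\ref{lem:projcond} then forces $G_{k_m}(T_{k_m})\cap\pi[\kkk|_n]\neq\emptyset$. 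Your few extra ingredients (invoking Theorem~\ref{thm:KKR}, approximating the slice point by points of the magnified set on $F$, and a final rescaling of the tangent) are harmless but unnecessary, since the slice point is already a point of $X$ on $F+x$ and the claimed radius $\delta\sin(\beta_\jjj/4)$ already lies strictly inside the reach $\delta\sin(\beta_\jjj/2)$ of the covering argument.
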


\begin{proof}
	Let $\Omega$ be as in Lemma~\ref{lem:omega}. Fix $x\in X$ and $\jjj\in\Omega$ and let $(r_k)_{k \in \N}$ be a sequence of real numbers satisfying $r_k = \delta$ for all $k \in \N$. By assumption, we have $F\in\spt(\mu_F)$ so that $\sphericalangle(F,\vartheta_1(\jjj))>0$. Then by Proposition~\ref{prop:maintech}, there exist a sequence $(s_k)_{k\in\N}$ of positive real numbers converging to zero and an unbounded sequence $(n_k)_{k\in\N}$ of natural numbers such that
	$$
    \fii_{\jjj|_{n_k}}^{-1} \circ M^{-1}_{\pi\jjj,s_k}(M_{\pi\jjj,s_k}(X)\cap B(0,1)) \subset X \cap B(x,\delta).
	$$
  Furthermore, there exists a sequence $(k_m)_{m \in \N}$ of natural numbers such that $M_{\pi\jjj,s_{k_m}}(X) \cap B(0,1) \to T_{\jjj} \in \Tan(X,\pi\jjj)$ in Hausdorff distance and
  $$
    G_{k_m} := M_{x,\delta} \circ \fii_{\jjj|_{n_{k_m}}}^{-1} \circ M^{-1}_{\pi\jjj,s_{k_m}} \to G \in \{\nproj_F^{\vartheta_1(\jjj)},-\nproj_F^{\vartheta_1(\jjj)}\}
  $$
  as $m\to\infty$. Hence, $\fii_{\jjj|_{n_k}}^{-1}(X\cap B(\pi\jjj,s_{k_m})) \to M^{-1}_{x,\delta} \circ G(T_{\jjj})$ in Hausdorff distance. By compactness of $X$, we conclude $M^{-1}_{x,\delta} \circ G(T_{\jjj}) \subset X \cap B(x,\delta)$.
	
  Let us then assume that the projection condition holds. It suffices to prove that if $\pi\kkk\in X\cap(F+x)\cap B(x,\delta \sin(\beta_\jjj/4)) \setminus (V_{F,x}+x)$, then $\pi\kkk\in G(T_{\jjj})$. By compactness of $G(T_{\jjj})$, it is enough to show that for every $\varepsilon>0$ there exists $m_0\in\N$ such that for every $m\geq m_0$ it holds that $G_{k_m}(T_{k_m})\cap B(\pi\kkk,\varepsilon)\neq\emptyset$, where $T_{k_m}=M_{\pi\jjj,s_{k_m}}(X)\cap B(0,1)$. Let $n\in\N$ be large enough so that $\pi[\kkk|_n]\subset B(\pi\kkk,\varepsilon)$. Then $F+x$ divides $\pi[\kkk|_n]$ into two parts, and let $\kappa>0$ be the supremum of positive real numbers such that $\pi[\kkk|_n]\setminus B(F+x,\kappa)$ contains a point on both open half-planes determined by $F+x$. Choosing $L\in\Theta_\jjj$ such that $\sphericalangle(\vartheta_1(\jjj),L)>\beta_\jjj/2$, we get
  %, by \cite[Proposition~3.1]{BougerolLacroix1985},
  $\|G|L\|=|\sin(\sphericalangle(\vartheta_1(\jjj),L))|\|G\|\geq\delta\sin(\beta_\jjj/2)$.

  Since $G_{k_m}(T_{k_m})\to G(T)\subset F+x$ in Hausdorff distance, there exists $m_0$ such that for every $m\geq m_0$ it holds that $G_{k_m}(T_{k_m})\subset B(F+x,\kappa/2)$. Furthermore, since $\|G_{k_m}|L\|\to \|G|L\| \geq \delta \sin(\beta/2)$ by construction, we may choose $m_0$ so large that $|x-\pi\kkk| \leq \sin(\beta_\jjj/4) \delta < \|G_{k_m}|L\|$. Hence, by Lemma~\ref{lem:projcond}, $G_{k_m}(T_{k_m})\cap\pi[\kkk|_n]\neq\emptyset$ and thus $G_{k_m}(T_{k_m})\cap B(\pi\kkk,\varepsilon)\neq\emptyset$ for every $m\geq m_0$.
\end{proof}

The following theorem yields the upper bound in Theorem \ref{thm:assouad}. By Proposition \ref{thm:KOR}, to bound the Assouad dimension, it is enough to bound the Hausdorff dimension of tangent sets. This can be done by the previous proposition as it shows that a slice of a self-affine set contains a projection of a tangent set. We remark that the assumption on the existence of $n$-step Bernoulli measure having close to maximal dimension is crucial in the proof. Note also that the projection condition is not needed in the upper bound.

\begin{theorem}\label{thm:assouadv2}
  Let $X$ be a planar self-affine set satisfying the strong separation condition. If for every $\eps>0$ there exists a fully supported $n$-step Bernoulli measure $\nu$ having simple Lyapunov spectrum such that $\dim(\pi\nu) \ge \dimh(X)-\eps$, then
  $$
		\dima(X) \leq 1+\sup_{\atop{x\in X}{F\in X_F}} \dimh(X \cap (F+x) \cap B(x,\delta')).
  $$
  for all $0<\delta'<\min_{i\neq j}\mathrm{dist}(\varphi_i(X),\varphi_j(X))$.
\end{theorem}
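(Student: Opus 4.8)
The plan is to combine Proposition~\ref{thm:KOR} with the slice-containment supplied by Proposition~\ref{prop:balls}. By Proposition~\ref{thm:KOR} we have $\dima(X)=\max\{\dimh(T):T\in\Tan(X)\}$, so it suffices to bound $\dimh(T)$ for an arbitrary weak tangent $T\in\Tan(X)$, say $T=\lim_{n\to\infty}M_{x_n,r_n}(X)\cap B(0,1)$ with $x_n\in X$ and $r_n\downarrow 0$. The elementary input I would record first is a one-dimensional slicing bound: for $F,V\in\RP$ with $F\neq V$, the fibres of the projection $\nproj_F^V$ meeting $T$ lie in segments of bounded length (since $T\subset B(0,1)$), so in the oblique coordinates given by $F$ and $V$ the set $T$ sits inside a product of $\nproj_F^V(T)$ with an interval, and the product estimate $\dimh(A\times B)\le\dimh(A)+\udimm(B)$ yields
\begin{equation*}
  \dimh(T)\le 1+\dimh\bigl(\nproj_F^V(T)\bigr).
\end{equation*}
Everything then reduces to producing, for each weak tangent $T$, a Furstenberg direction $F\in X_F$, a transverse direction $V=\vartheta_1$, and a point $x\in X$ for which $\nproj_F^{\vartheta_1}(T)$ is contained in a slice $(X-x)\cap F\cap B(0,\delta')$; the extra factor $1$ is forced to contribute \emph{exactly} one because the projection condition makes $\proj_{V^\perp}X$ a genuine line segment.

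To obtain the slice-containment I would run the recurrence-and-degeneration machinery of Section~\ref{sec:tangent-sets}. Fixing the fully supported $n$-step Bernoulli measure $\nu$ with simple Lyapunov spectrum and $\dim(\pi\nu)\ge\dimh(X)-\eps$ given by the hypothesis, choose a generic $\jjj\in\Omega$ (with $\Omega$ as in Lemma~\ref{lem:omega}) and a direction $F\in\spt(\mu_F)$ with $\sphericalangle(F,\vartheta_1(\jjj))>0$. Stopping the cylinders $\fii_\iii(X)$ at the level where their longer semiaxis is comparable to the magnification scale, the simple Lyapunov spectrum forces the magnified cylinders to be thin needles whose directions collapse to $\vartheta_1(\jjj)$, and the magnification maps $M_{x,\delta}\circ\fii_{\jjj|_{n_k}}^{-1}\circ M_{\pi\jjj,s_k}^{-1}$ degenerate to $\pm\nproj_F^{\vartheta_1(\jjj)}$ exactly as in Proposition~\ref{prop:maintech}. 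This is what Proposition~\ref{prop:balls} records: the slice of $X$ based at any $x\in X$ contains the projected pointwise tangent $\nproj_F^{\vartheta_1(\jjj)}(T_\jjj)$. Feeding this into the slicing bound gives $\dimh(T_\jjj)\le 1+\dimh\bigl((X-x)\cap F\cap B(0,\delta')\bigr)$ for every generic pointwise tangent $T_\jjj\in\Tan(X,\pi\jjj)$, which already settles the desired estimate for this class of tangents.

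The main obstacle is that Proposition~\ref{thm:KOR} forces us to control \emph{every} weak tangent, whereas the needle/comb structure of Section~\ref{sec:tangent-sets} is only available at $\nu$-generic base points, and a weak tangent may be extracted along a highly non-generic sequence $(x_n)$. This is precisely where the hypothesis $\dim(\pi\nu)\ge\dimh(X)-\eps$ is crucial, and I expect it to be the delicate part. I would use it to run the needle decomposition directly on $M_{x_n,r_n}(X)\cap B(0,1)$: the spine direction $\vartheta_1$ contributes a covering count $\asymp r_n/(\text{fine scale})$, i.e.\ dimension exactly $1$ thanks to the projection condition, while the transverse count is governed by a slice. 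The full-dimensional measure $\pi\nu$ is then what lets me bound the transverse count by the \emph{Hausdorff} dimension of slices rather than by their box-counting number, simultaneously removing the genericity restriction on $(x_n)$; without a measure capturing almost all of $\dimh(X)$ one cannot close the gap between the box-type count coming out of the covering and the Hausdorff-dimensional quantity appearing in the statement.

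Maximising over weak tangents and letting $\eps\downarrow0$ would then give
\begin{equation*}
  \dima(X)\le 1+\sup_{\atop{x\in X}{F\in X_F}}\dimh\bigl(X\cap(F+x)\cap B(x,\delta')\bigr),
\end{equation*}
for every admissible $\delta'$, the restriction to $B(x,\delta')$ arising from the separation constant $\delta=\min_{i\neq j}\dist(\fii_i(X),\fii_j(X))/2$ that bounds the scale at which the needle decomposition and Proposition~\ref{prop:balls} are valid. In summary, the slicing inequality and the needle geometry are comparatively routine once Propositions~\ref{prop:maintech} and~\ref{prop:balls} are in hand; the genuine difficulty, and the step I would spend the most care on, is transferring the generic pointwise bound to an arbitrary weak tangent by means of the full-dimensional measure.
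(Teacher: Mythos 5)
Your proposal has the right opening moves (Proposition~\ref{thm:KOR} plus the product/slicing bound $\dimh(T)\le 1+\dimh(\nproj_F^V(T))$), but it contains a genuine gap precisely at the step you flag as ``the delicate part,'' and the fix you sketch there is not the one that works. Your reduction --- ``everything reduces to producing, for each weak tangent $T$, a direction $F\in X_F$ and a point $x$ with $\nproj_F^{\vartheta_1}(T)$ contained in a slice'' --- is not achievable: along a non-generic sequence $(x_n,r_n)$ (for instance one shadowing a conformal or low-eccentricity branch), the renormalized inverse cylinder maps have bounded eccentricity, so no projection structure emerges and no slice can contain a projected copy of $T$. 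The paper handles this by a rank dichotomy that your proposal is missing. For \emph{every} weak tangent $T$ one builds, by a stopping-time and compactness argument needing no genericity whatsoever, a limit map $G$ of the maps $\varphi_{\iii_k|_{n_k}}^{-1}\circ M^{-1}_{x_k,r_k}$ with $\|G\|$ bounded above and below and $G(T)+y\subset X\cap B(y,\delta')$. If $\rank(G)=1$, then since $G$ is a limit of scalar multiples of elements of $\mathcal{S}^{-1}(\mathsf{A})$, its image lies in $X_F$ \emph{by definition} of $X_F$, so $G(T)+y$ sits inside a Furstenberg slice and the product bound gives $\dimh(T)\le 1+\dimh(\text{slice})$; this is how one gets Furstenberg directions at arbitrary, non-generic base points, where your appeal to $\spt(\mu_F)$ and Proposition~\ref{prop:balls} (which only serves the \emph{lower} bound) cannot reach. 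If $\rank(G)=2$, then $G$ is bi-Lipschitz on $B(0,1)$ and one only gets $\dimh(T)\le\dimh(X)$.

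This rank-two case is where the hypothesis $\dim(\pi\nu)\ge\dimh(X)-\eps$ actually enters, and it enters globally, not to ``remove the genericity restriction on $(x_n)$'' as you propose: one applies the Ledrappier--Young formula \cite[Propositions~5.8 and 5.9]{BaranyKaenmaki2017} to $\pi\nu$, namely $\dim(\pi\nu)=\dim(\proj_{F^\perp}\pi\nu)+\dim((\pi\nu)^F_{\pi\iii})\le 1+\sup\dimh(X\cap(F+x)\cap B(x,\delta'))$, which yields $\dimh(X)\le 1+\sup\dimh(\text{slices})+\eps$ and closes the rank-two case. Your alternative --- a needle decomposition run directly on $M_{x_n,r_n}(X)\cap B(0,1)$ with $\pi\nu$ controlling the transverse count --- cannot work as stated, because a measure only controls almost-every behaviour while weak tangents may be based on null sets of points; no amount of mass near full dimension forces needle geometry at a non-generic centre. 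Two smaller points: the theorem does not assume the projection condition (the paper stresses it is not needed for the upper bound), so your invocation of it is both unnecessary (for an upper bound one needs the projection to contribute \emph{at most} one, which is free) and would weaken the statement; and the Ledrappier--Young input, which is the one genuinely external ingredient tied to the hypothesis, never appears in your outline.
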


\begin{proof}
  By Proposition~\ref{thm:KOR}, $\dima(X)=\max\{\dimh(T) : T\in\Tan(X)\}$. It is therefore enough to estimate the Hausdorff dimension of all weak tangent sets from above. Fix $0<\delta'<\min_{i\neq j}\mathrm{dist}(\varphi_i(X),\varphi_j(X))$. Let us first show that for every weak tangent set $T \in \Tan(X)$ there exists a linear map $G$ and a point $y\in X$ such that
	\begin{enumerate}
		\item $\delta'\min\{\|A_i^{-1}\|^{-1} : i \in \{1,\ldots,N\}\} \le \|G\| \le \delta'$,
		\item $G(T)+y \subset X\cap B(y,\delta')$.
	\end{enumerate}
  Indeed, let $(\iii_k)_{k\in\N}$ be a sequence of infinite words in $\Sigma$ and $(s_k)_{k\in\N}$ be a sequence of positive real numbers converging to zero such that $M_{\pi\iii_k,s_k}(X)\cap B(0,1)\to T$ in Hausdorff distance. Write $n_k:=\max\{n\in\N : \varphi^{-1}_{\iii_k|_n}(B(\pi\iii_k,s_k)) \subset B(\pi\sigma^n\iii_k,\delta')\}$ and observe that then
	$$
    \delta'\min\{\|A_i^{-1}\|^{-1} : i \in \{1,\ldots,N\}\} < \|A_{\iii_k|_{n_k}}^{-1}\|s_k \leq \delta'.
	$$
	Since $0<\delta'<\dist(\varphi_i(X),\varphi_j(X))$ whenever $i \neq j$, we see that $X \cap B(\pi\iii_k,s_k)\subset\varphi_{\iii_k|_{n_k}}(X)$. By compactness, without loss of generality, we may assume that there exists $y\in X$ such that $\pi\sigma^{n_{k}}\iii_{k} \to y$. Hence, $\varphi_{\iii_k|_{n_{k}}}^{-1} \circ M_{\pi\iii_k,s_{k}}^{-1} \to G+y$ uniformly on $B(0,1)$, and thus, $\varphi_{\iii_k|_{n_{k}}}^{-1}\circ M_{\pi\iii_k,s_{k}}^{-1}(X \cap B(\pi\iii_k,s_k)) \to G(T)+y$ in Hausdorff distance. Since $\varphi_{\iii_k|_{n_k}}^{-1}(X \cap B(\pi\iii_k,s_k))\subset X$, we get, by compactness, that $G(T)+y\subset X\cap B(y,\delta')$. We have therefore proved the claim.

  Fix $T \in \Tan(X)$ and let $G$ and $y$ be as above. Since $\|G\| \ge \delta'\min\{\|A_i^{-1}\|^{-1} : i \in \{1,\ldots,N\}\} > 0$, we see that $G$ is of rank one or rank two. If $G$ is of rank one, then, by definition, $\mathrm{Im}(G) \in X_F$ and $G(T)+y \subset X \cap B(y,\delta') \cap (\mathrm{Im}(G)+y)$. It follows that
  $$
    \dimh(T) \leq 1+\dimh(G(T)+y) \leq 1+\sup_{\atop{x\in X}{F\in X_F}}\dimh(X \cap(F+x) \cap B(x,\delta')).
  $$
  If $G$ is of rank two, then
  $$
    \dimh(T)=\dimh(G(T)+y)\leq\dimh(X).
  $$
  Observe that if $\dimh(X)\leq1$, then the claim of the theorem holds trivially. Without loss of generality, we may assume that $\dimh(X)>1$.

  Fix $\varepsilon>0$ and let $\nu$ be an $n$-step Bernoulli measure having simple Lyapunov spectrum such that $\dim(\pi\nu) \ge \dimh(X)-\eps$. By \cite[Propositions~5.8 and 5.9]{BaranyKaenmaki2017}, for $\mu_F$-almost every $F$ and $\nu$-almost every $\iii$, we have
  $$
    \dim(\pi\nu) = \dim(\proj_{F^\perp}\pi\nu)+\dim((\pi\nu)^F_{\pi\iii}) \leq 1+\sup_{\atop{x\in X}{F\in\spt(\mu_F)}}\dimh(X\cap(F+x)\cap B(x,\delta')),
  $$
  where $\{(\pi\nu)^F_{\pi\iii}\}$ is the family of conditional measures of $\pi\nu$ such that $\spt((\pi\nu)^F_{\pi\iii})\subset(F+\pi\iii)\cap X$ and $\pi\nu=\int_\Sigma(\pi\nu)^F_{\pi\jjj} \dd\nu(\jjj)$. Recalling that $\spt(\mu_F)\subset X_F$, we get
  $$
    \dimh(T) \leq \dimh(X) \leq 1+\sup_{\atop{x\in X}{F\in X_F}} \dimh(X\cap(F+x)\cap B(x,\delta')) + \eps.
  $$
  Letting $\eps \downarrow 0$ finishes the proof.
\end{proof}

Let us now turn our attention to the lower bound. We begin with a small technical lemma.

\begin{lemma}\label{lem:division}
  Let $A \subset \R$ be compact. Then for every $\eps>0$ there exists a point $x \in A$ such that
  \begin{equation*}
    \min\{\dimh(A\cap(-\infty,x]),\dimh(A\cap[x,\infty))\} \geq \dimh(A)-\varepsilon.
  \end{equation*}
\end{lemma}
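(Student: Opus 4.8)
The plan is to reduce the two–sided statement to the elementary behaviour of two monotone functions built from the Hausdorff dimension. Write $d = \dimh(A)$ and assume $A \neq \emptyset$ and $d > \eps$, since otherwise any point of $A$ works (Hausdorff dimension being nonnegative). For $x \in \R$ set $f(x) = \dimh(A \cap (-\infty,x])$ and $g(x) = \dimh(A \cap [x,\infty))$. The whole argument rests on three standard properties of $\dimh$: finite stability, which gives $\max\{f(x),g(x)\} = d$ for every $x$; monotonicity, which makes $f$ non-decreasing and $g$ non-increasing; and countable stability together with the fact that adjoining a single point does not change dimension, which yields $f(x) = \dimh(A \cap (-\infty,x)) = \sup_{y<x} f(y)$, i.e.\ $f$ is left-continuous (and symmetrically $g$ is right-continuous).

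Next I would locate the places where each piece first becomes large. Define $x_0 = \inf\{x : f(x) \ge d - \eps\}$ and $x_1 = \sup\{x : g(x) \ge d - \eps\}$; both are finite, as the defining sets are nonempty because $f(\max A) = g(\min A) = d$, and bounded because $f(x) = 0 < d-\eps$ for $x < \min A$ and $g(x) = 0 < d-\eps$ for $x > \max A$. Left-continuity gives $f(x_0) \le d - \eps$ and, symmetrically, $g(x_1) \le d - \eps$, while monotonicity gives $f(x) \ge d-\eps$ for $x > x_0$ and $g(x) \ge d - \eps$ for $x < x_1$. I would then check $x_0 < x_1$: a point strictly between $x_1$ and $x_0$ would have both $f$ and $g$ below $d - \eps$, and $x_0 = x_1$ would force $\max\{f,g\} \le d - \eps$ at that point; either possibility contradicts $\max\{f,g\} = d$.

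With $x_0 < x_1$ in hand, every $x \in (x_0,x_1)$ already satisfies $f(x) \ge d - \eps$ and $g(x) \ge d - \eps$; the only remaining issue, which I expect to be the main obstacle, is to produce such an $x$ lying in $A$, because the infimum defining $x_0$ need not be attained at a point of $A$ and the open interval $(x_0,x_1)$ could a priori avoid $A$. I would resolve this by splitting on the value of $f(x_0)$. If $f(x_0) = d - \eps$, then $A \cap (-\infty,x_0]$ is nonempty (it has positive dimension), so I take its largest point $x^\ast = \max(A \cap (-\infty,x_0]) \in A$; since $A$ has no points in $(x^\ast,x_0]$ the left piece is unchanged and still has dimension $d - \eps$, while $[x^\ast,\infty) \supset [x_0,\infty)$ together with $\max\{f(x_0),g(x_0)\} = d$ and $f(x_0) < d$ force the right piece to have dimension $d$, so $x^\ast$ works. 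If instead $f(x_0) < d - \eps$, then for every $x > x_0$ the decomposition $f(x) = \max\{f(x_0), \dimh(A \cap (x_0,x])\}$ combined with $f(x) \ge d - \eps$ forces $\dimh(A \cap (x_0,x]) \ge d - \eps > 0$, so $A$ accumulates at $x_0$ from the right; picking such a point inside $(x_0,x_1)$ (possible since $x_0 < x_1$) then gives a point of $A$ on which both one-sided dimensions are at least $d - \eps$.

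Thus the delicate point is precisely the possible failure of the infimum to be attained, equivalently the concentration of dimension on $A$ just to the right of $x_0$, and this is exactly what the case distinction on $f(x_0)$ is designed to absorb; everything else is bookkeeping with the finite, countable, and point-stability properties of $\dimh$.
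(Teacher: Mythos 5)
Your proof is correct, but it is organized quite differently from the paper's. The paper argues by contradiction: assuming every $x\in A$ fails the bound by some fixed $\eta>0$, it sets $y_1=\sup\{x\in A:\dimh(A\cap(-\infty,x])\le\dimh(A)-\eta\}$ and $y_2=\inf\{x\in A:\dimh(A\cap[x,\infty))\le\dimh(A)-\eta\}$ --- thresholds taken over points of $A$ where the one-sided dimensions are \emph{small}, which lie in $A$ by compactness --- and then observes that if $y_1\ge y_2$, or if $y_1<y_2$ and $A\cap(y_1,y_2)=\emptyset$, then $A$ minus two points is a countable union of sets of dimension at most $\dimh(A)-\eta$, contradicting countable stability; in the remaining case any point of $A\cap(y_1,y_2)$ contradicts the standing assumption. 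You argue directly, with thresholds $x_0,x_1$ taken over all of $\R$ where the one-sided dimensions are \emph{large}, and your use of countable stability is repackaged as left/right-continuity of $f$ and $g$. The genuinely different step is how each proof handles the possibility that $A$ misses the critical interval: the paper converts it into the covering contradiction, whereas you split on whether $f(x_0)=\dimh(A)-\eps$ and, in that case, relocate the witness to $\max(A\cap(-\infty,x_0])$ --- a point of $A$ possibly far to the left of $(x_0,x_1)$ --- using that the complementary piece there has full dimension; in the other case $A$ must accumulate on $x_0$ from the right, so it does meet $(x_0,x_1)$. Your route is somewhat longer but direct, and it produces the witness explicitly in both cases (with the extra information that in your first case the right-hand piece has dimension exactly $\dimh(A)$); the paper's argument is more compact at the price of being purely a proof by contradiction. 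Both rest on the same toolkit: monotonicity, finite and countable stability of $\dimh$, and compactness of $A$.
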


\begin{proof}
  Suppose for a contradiction that this is not the case. In other words, we assume that there is $\eta>0$ such that
  $$
    \min\{\dimh(A\cap(-\infty,x]),\dimh(A\cap[x,\infty))\} \leq \dimh(A)-\eta
  $$
  for all $x\in A$. Write $y_1 = \sup\{x\in A : \dimh(A\cap(-\infty,x]) \leq \dimh(A)-\eta\}$ and $y_2=\inf\{x\in A : \dimh(A\cap[x,\infty)) \leq \dimh(A)-\eta\}$. Since $A$ is compact, we have $y_1,y_2 \in A$. If either $y_1\geq y_2$ or $y_1<y_2$ and $A\cap(y_1,y_2)=\emptyset$, then there exist an increasing sequence $(x_n)_{n\in\N}$ of real numbers converging to $y_1$ and a decreasing sequence $(z_n)_{n\in\N}$ of real numbers converging to $y_2$ so that
  $$
    A \setminus \{y_1,y_2\} \subset \biggl(\bigcup_{n \in \N} A \cap (-\infty,x_n]\biggr) \cup \biggl(\bigcup_{n \in \N} A \cap [z_n,\infty)\biggr),
  $$
  where $\max\{ \dimh(A \cap (-\infty,x_n]), \dimh(A \cap [z_n,\infty)) \} \le \dimh(A)-\eta$ for all $n \in \N$. Since the Hausdorff dimension is countably stable, this is a contradiction. Therefore, $y_1<y_2$ and $A \cap (y_1,y_2) \neq \emptyset$. But this cannot be the case either, since, by choosing $x\in A \cap (y_1,y_2)$, we have $\min\{\dimh(A \cap (-\infty,x]),\dimh(A \cap [x,\infty))\} > \dimh(A)-\eta$ which is again a contradiction.
\end{proof}

The following theorem yields the lower bound in Theorem \ref{thm:assouad}. In the proof, we use similar approach as in the proof of Theorem \ref{thm:assouadv2} but this time we assume the projection condition and apply the relevant inclusions from Proposition \ref{prop:balls}.

\begin{theorem}\label{thm:assouadv3}
  Let $X$ be a planar self-affine set satisfying the strong separation condition and the projection condition. If $\nu$ is a fully supported Bernoulli measure having simple Lyapunov spectrum, then
  \begin{equation*}
    \dima(X) \ge 1 + \sup_{\atop{x \in X}{F \in \spt(\mu_F)}} \dimh(X \cap (F+x) \cap B(x,\delta \sin(\beta/4) ) ),
  \end{equation*}
  where $\mu_F$ is the Furstenberg measure with respect to $\nu$.
\end{theorem}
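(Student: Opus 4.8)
The plan is to build, for each fixed $x\in X$ and $F\in\spt(\mu_F)$, a weak tangent set of $X$ whose Hausdorff dimension is at least $1+\dimh(S)$, where $S=X\cap(F+x)\cap B(x,\delta\sin(\beta/4))$, and then invoke Proposition~\ref{thm:KOR}, which identifies $\dima(X)$ with the largest Hausdorff dimension of a weak tangent set. Since the projection condition forces $X$ not to lie on a line, $\dima(X)\ge\dimh(X)\ge1$, so the asserted estimate is automatic when $\dimh(S)=0$; I therefore fix $x,F$ and assume $\dimh(S)>0$. By Proposition~\ref{thm:KOR} it then suffices to exhibit \emph{one} tangent set $T_\jjj\in\Tan(X,\pi\jjj)\subset\Tan(X)$ with $\dimh(T_\jjj)\ge 1+\dimh(S)$.

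First I would choose the generic word. Let $\Omega$ be as in Proposition~\ref{prop:balls}, and pick $\jjj\in\Omega$ with $\beta_\jjj$ as close to $\beta=\sup_{\iii\in\Omega}\beta_\iii$ as we like and with $\sphericalangle(F,\vartheta_1(\jjj))>0$. This transversality is available because, for $\jjj\in\Omega$, the direction $\vartheta_1(\jjj)$ is a non-atom of $\mu_F$ (this is shown inside the proof of Lemma~\ref{lem:omega}): if $F$ is an atom of $\mu_F$ the inequality $\sphericalangle(F,\vartheta_1(\jjj))>0$ holds for \emph{every} $\jjj\in\Omega$, and if $F$ is a non-atom one simply selects $\jjj\in\Omega$ with $\vartheta_1(\jjj)\neq F$, the only excluded case being the degenerate point mass $\vartheta_1\nu=\delta_F$. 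Since Proposition~\ref{prop:balls} controls only the ball of the smaller radius $\rho=\delta\sin(\beta_\jjj/4)\le\delta\sin(\beta/4)$, I then relocate the centre: covering the compact set $S\subset F+x$ by finitely many balls of radius $\rho$ centred on $S$ and using finite stability of Hausdorff dimension yields a point $x'\in S\subset X$ with $\dimh(S\cap B(x',\rho))=\dimh(S)$. As $x'\in F+x$ gives $F+x'=F+x$, this furnishes $\dimh\big(X\cap(F+x')\cap B(x',\rho)\big)\ge\dimh(S)$. Here Lemma~\ref{lem:division} enters as the robust substitute for the naive covering: it produces a centre at which the slice carries almost full dimension on \emph{both} sides, which is what guarantees that the dimension persists after discarding the exceptional set $V_{F,x'}$ appearing below.

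With this $x'$, Proposition~\ref{prop:balls} provides a tangent set $T_\jjj\in\Tan(X,\pi\jjj)$ for which, up to sign, $\nproj_F^{\vartheta_1(\jjj)}(T_\jjj)\supset\tfrac1\rho\big((X\cap(F+x')\cap B(x',\rho))-x'\big)\setminus\tfrac1\rho V_{F,x'}$. Because $V_{F,x'}$ consists only of the one-sided, non-straddling slice points, it is dimensionally negligible (the balanced choice of $x'$ ensuring a surviving full-dimensional part), so $\dimh\big(\nproj_F^{\vartheta_1(\jjj)}(T_\jjj)\big)\ge\dimh\big(X\cap(F+x')\cap B(x',\rho)\big)\ge\dimh(S)$. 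Finally I would exploit the comb structure. By Theorem~\ref{thm:KKR}, $T_\jjj$ is a rotated comb whose teeth run in the direction $\vartheta_1(\jjj)$, and $\nproj_F^{\vartheta_1(\jjj)}$ projects along precisely this teeth direction onto the transverse line $F$; since $\sphericalangle(F,\vartheta_1(\jjj))>0$, this projection is bi-Lipschitz on the transverse cross-section and hence preserves its dimension. Writing the comb in rotated coordinates as $(\R\times C)\cap B(0,1)$ with $C$ porous, the projected image is a bi-Lipschitz copy of $C$, while $T_\jjj$ contains $\big(C\cap(-1+\eta,1-\eta)\big)\times I_\eta$ for a nondegenerate interval $I_\eta$; letting $\eta\downarrow0$ and using countable stability gives $\dimh(T_\jjj)\ge 1+\dimh(C)=1+\dimh\big(\nproj_F^{\vartheta_1(\jjj)}(T_\jjj)\big)$. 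Combining the displayed inequalities yields $\dima(X)\ge\dimh(T_\jjj)\ge1+\dimh(S)$, and taking the supremum over $x\in X$ and $F\in\spt(\mu_F)$ completes the argument.

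The comb-versus-cross-section dimension identity is routine product-set bookkeeping, and the reduction through Proposition~\ref{thm:KOR} is immediate. The main obstacle I anticipate is ensuring that the \emph{entire} slice dimension genuinely passes through the projection of the tangent set: one must simultaneously match the two ball radii $\delta\sin(\beta_\jjj/4)$ and $\delta\sin(\beta/4)$ and verify that removing the exceptional set $V_{F,x'}$ does not lower the dimension. This is exactly the role of the balanced centre produced by Lemma~\ref{lem:division}, and verifying that the one-sided points collected in $V_{F,x'}$ are negligible at such a centre is the delicate technical point of the proof.
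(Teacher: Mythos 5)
Your proposal reproduces, essentially correctly, what the paper calls the first case of its proof: when removing the exceptional set $V_{F,x}$ does not lower the dimension of the slice, your chain of reasoning (generic $\jjj$ with $\sphericalangle(F,\vartheta_1(\jjj))>0$, Proposition~\ref{prop:balls}, the comb structure from Theorem~\ref{thm:KKR} giving $\dimh(T_\jjj)=1+\dimh(\nproj_F^{\vartheta_1(\jjj)}(T_\jjj))$, and Proposition~\ref{thm:KOR}) is sound, and your relocation of the centre to reconcile the radii $\delta\sin(\beta_\jjj/4)$ and $\delta\sin(\beta/4)$ is a legitimate fix. But there is a genuine gap at the decisive step: you assert that $V_{F,x'}$ is ``dimensionally negligible'' because it consists of one-sided, non-straddling points, with Lemma~\ref{lem:division} supposedly guaranteeing this. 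That claim is false in general and Lemma~\ref{lem:division} does not deliver it. The set $V_{F,x}$ collects every slice point $\pi\kkk$ some cylinder $\pi[\kkk|_n]$ of which lies in a closed half-plane determined by $F+x$; nothing prevents this from being the \emph{entire} slice (think of a slice running along the boundary of the convex hull of $X$, or along the edge of a cylinder, where all deeper cylinders sit on one side of the line). Lemma~\ref{lem:omega2} only says the set of \emph{directions} $F$ for which such points exist is countable for $\nu$-generic base points; here $x$ and $F$ are chosen to nearly attain a supremum, not generically, so you cannot exclude $\dimh(V_{F,x}+x)=\dimh(X\cap(F+x)\cap B(x,\delta\sin(\beta/4)))>0$. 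Lemma~\ref{lem:division} is a statement about splitting a compact subset of $\R$ at a point so that both halves retain almost full dimension; it says nothing about the dimension surviving the removal of $V_{F,x'}$, so your invocation of it is a non sequitur.

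This missing case is precisely where the bulk of the paper's proof lives. When $V_{F,x}$ carries the full dimension, the paper covers $V_{F,x}+x$ by countably many cylinders $\pi[\kkk_l|_{n(\kkk_l)}]$ each lying in a closed half-plane $P$ determined by $F+x$, picks one carrying almost full dimension, and only then applies Lemma~\ref{lem:division} --- to the slice \emph{inside that cylinder} --- to find a point $\pi\kkk$ at which both half-line slices $(F^{\pm}_{\pi\kkk}+x)\cap\pi[\kkk_l|_{n(\kkk_l)}]$ have almost full dimension. It then runs the tangent construction of Proposition~\ref{prop:maintech} centred at $\pi\kkk$ and uses the projection condition (Lemma~\ref{lem:projcond}) together with the fact that the approximating segments $L_m$ lie in $P$ to show that $G(T_\jjj)$ contains at least \emph{one} of the two half-line slices; the balancing from Lemma~\ref{lem:division} is needed exactly because one cannot control which half is captured. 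Without this argument (or a substitute for it), your proof establishes the theorem only under the additional hypothesis \eqref{eq:assouadv3-case1}, and the general statement remains unproved.
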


\begin{proof}
  Fix $\varepsilon>0$ and let $x\in X$ and $F\in\spt(\mu_F)$ be such that
  \begin{equation} \label{eq:assouadv3-1}
    \dimh(X\cap(F+x)\cap B(x,\delta \sin(\beta/4) )) \ge \sup_{\atop{x'\in X}{F'\in\spt(\mu_F)}}\dimh(X\cap(F'+x')\cap B(x',\delta \sin(\beta/4) ))-\varepsilon.
  \end{equation}
  Let $V_{F,x}$ be as in \eqref{eq:vfi}. Suppose first that
  \begin{equation} \label{eq:assouadv3-case1}
    \dimh(X\cap(F+x)\cap B(x,\delta \sin(\beta/4) ))=\dimh(X\cap(F+x)\cap B(x,\delta \sin(\beta/4) )\setminus (V_{F,x}+x)).
  \end{equation}
  Let $\Omega \subset \Sigma$ be as in Proposition \ref{prop:balls}. Since, by \eqref{eq:furst} and Lemma~\ref{lem:visit}, $\lim_{n\to\infty}\frac{1}{n}\log\|A_{\jjj|_n}^{-1}|F\|=\chi_2(\nu)$ and $\lim_{n\to\infty}\frac{1}{n}\log\|A_{\jjj|_n}^{-1}|\vartheta_1(\jjj)\|=\chi_1(\nu)$ for $\nu$-almost all $\jjj \in \Sigma$, there exists $\jjj\in\Omega$ such that $\sphericalangle(\vartheta_1(\jjj),F) > 0$ and $\beta_\jjj>\beta/2$. Therefore, by Proposition~\ref{prop:balls}, there exist $T_\jjj\in\Tan(X,\pi\jjj)$ and a linear map $G \in \{\nproj_F^{\vartheta_1(\jjj)}, -\nproj_F^{\vartheta_1(\jjj)}\}$ such that
  $$
    G(T_\jjj)+x\supset X\cap(F+x)\cap B(x,\delta \sin(\beta/4) )\setminus (V_{F,x}+x).
  $$
  By Theorem~\ref{thm:KKR}, $O_\jjj T_\jjj$ is a comb, where $O_\jjj$ is the rotation that takes $\vartheta_1(\jjj)$ to the $x$-axis, and thus,
  \begin{equation} \label{eq:assouadv3-2}
    \dimh(T_\jjj) = 1+\dimh(G(T_\jjj)) \geq 1+\dimh(X\cap(F+x)\cap B(x,\delta \sin(\beta/4) )).
  \end{equation}
  Furthermore, by Proposition~\ref{thm:KOR}, \eqref{eq:assouadv3-2}, and \eqref{eq:assouadv3-1},
	\[
  \begin{split}
    \dima(X) &= \max\{\dimh(T') : T'\in\Tan(X)\} \geq \dimh(T_\jjj) \\
    &\geq 1+\sup_{\atop{x'\in X}{F'\in\spt(\mu_F)}}\dimh(X\cap(F'+x')\cap B(x',\delta \sin(\beta/4) ))-\varepsilon.
	\end{split}
	\]
  By letting $\eps \downarrow 0$, the proof follows under the assumption \eqref{eq:assouadv3-case1}.

  Let us then assume that
  \begin{equation} \label{eq:assouadv3-case2}
    \dimh(X\cap(F+x)\cap B(x, \delta\sin(\beta/4) )) = \dimh((V_{F,x}+x) \cap B(x,\delta\sin(\beta/4)))  > 0.
  \end{equation}
  For $\pi\kkk\in(V_{F,x}+x) \cap B(x,\delta\sin(\beta/4)) $, let $n(\kkk)$ be the smallest integer such that $\pi[\kkk|_{n(\kkk)}]$ is contained in one of the closed half-planes determined by $F+x$ and $\diam\pi[\kkk|_{n(\kkk)}]<\delta\sin(\beta/4)$. Observe that there is a countable set $\{\pi\kkk_1,\pi\kkk_2,\ldots\} \subset(V_{F,x}+x) \cap B(x,\delta\sin(\beta/4))$ so that $(V_{F,x}+x) \cap B(x,\delta\sin(\beta/4))\subset \bigcup_{l\in\N} \pi[\kkk_l|_{n(\kkk_l)}]$. Relying on the strong separation condition, we may assume that the sets $\pi[\kkk_l|_{n(\kkk_l)}]$ are pairwise disjoint. Hence, for every $\varepsilon>0$ there exists $l\in\N$ such that $\dimh((F+x)\cap\pi[\kkk_l|_{n(\kkk_l)}]) \ge \dimh((V_{F,x}+x) \cap B(x,\delta\sin(\beta/4))) -\varepsilon$. For a point $z \in F+x$, denote the two half-lines of $F+x$ separated by $z$ by $F^+_z+x$ and $F_z^-+x$. By Lemma~\ref{lem:division}, there exists $\pi\kkk\in (F+x)\cap\pi[\kkk_l|_{n(\kkk_l)}]$ such that
  \begin{equation} \label{eq:something-important}
  \begin{split}
    \dimh((F_{\pi\kkk}^{\pm}+x)\cap\pi[\kkk_l|_{n(\kkk_l)}]) &\ge \dimh((F+x)\cap\pi[\kkk_l|_{n(\kkk_l)}])-\varepsilon \\
    &\ge \dimh(V_{F,x}+x) \cap B(x,\delta\sin(\beta/4)) - 2\varepsilon.
  \end{split}
  \end{equation}
  Let $\nu$ be a fully supported Bernoulli measure having simple Lyapunov spectrum. Let $\Omega \subset \Sigma$ with $\nu(\Omega)=1$ be as in Lemma~\ref{lem:omega} and fix $\jjj\in\Omega$  and $L\in\Theta_\jjj$ such that $\sphericalangle(\vartheta_1(\jjj),F)>0$ and $\sphericalangle(\vartheta_1(\jjj),L)>\beta/2$. Then, by Proposition~\ref{prop:maintech}, there exist a sequence $(s_k)_{k\in\N}$ of positive real numbers converging to zero and an unbounded sequence $(n_k)_{k\in\N}$ of natural numbers such that
	$$
    \fii_{\jjj|_{n_k}}^{-1}(X\cap B(\pi\jjj,s_k))\subset X\cap B(\pi\kkk, \sin(\beta/4)^{-1} \diam(\pi[\kkk_\ell|_{n(\kkk_\ell)}])).
	$$
 To simplify notation, write $\delta'=\sin(\beta/4)^{-1}\diam(\pi[\kkk_\ell|_{n(\kkk_\ell)}])$ and $\delta''=\diam(\pi[\kkk_\ell|_{n(\kkk_\ell)}])$. There exists a sequence $(k_m)_{m\in\N}$ of natural numbers and $T_\jjj \in \Tan(X,\pi\jjj)$ such that $\pi\sigma^{n_{k_m}}\jjj \to \pi\kkk$ and
  \begin{align*}
    T_{k_m} &:= M_{\pi\jjj,s_{k_m}}(X)\cap B(0,1) \to T_\jjj, \\
    G_{k_m} &:= \fii_{\jjj|_{n_{k_m}}}^{-1} \circ M^{-1}_{\pi\jjj,s_{k_m}}\to G \in \{M_{\pi\kkk,\delta'}^{-1}\circ\nproj_F^{\vartheta_1(\jjj)},-M_{\pi\kkk,\delta'}^{-1}\circ\nproj_F^{\vartheta_1(\jjj)}\}
  \end{align*}
  in Hausdorff distance as $m \to \infty$. Hence, $\fii_{\jjj|_{n_k}}^{-1}(X\cap B(\pi\jjj,s_{k_m}))\to G(T_{\jjj})$ in Hausdorff distance.

  Let us next show that
  \begin{equation}\label{eq:need-1}
    G(T_\jjj)\supset(F_{\pi\kkk}^++x)\cap\pi[\kkk_l|_{n(\kkk_l)}] \quad\text{or}\quad G(T_\jjj)\supset(F_{\pi\kkk}^-+x)\cap\pi[\kkk_l|_{n(\kkk_l)}].
  \end{equation}
  %Denote the longest main axis of the ellipse $G_{k_m}(B(0,1))$ by $L_{k_m}$ and the shortest by $S_{k_m}$.
  The point $\pi\sigma^{n_{k_m}}\jjj$ divides the line segment $G_{k_m}(L \cap B(0,1))$ into two line segments. Denote them by $L_m^+$ and $L_m^-$.
  Let $P$ be the closed half-plane determined by $F+x$ which contains $\pi[\kkk_l|_{n(\kkk_l)}]$. Since $\pi\sigma^{n_{k_m}}\jjj\in \pi[\kkk_l|_{n(\kkk_l)}]$ for every $m$ large enough, we see that at least one of the line segments $L_{m}^\pm$ must be contained in $P$. Denote this segment by $L_m$. By going into a subsequence if necessary, we have that either
  $$
    L_{m} \cap B(\pi\kkk,\delta') \to(F_{\pi\kkk}^++x)\cap B(\pi\kkk,\delta'') \quad\text{or}\quad L_{m} \cap B(\pi\kkk,\delta') \to(F_{\pi\kkk}^-+x)\cap B(\pi\kkk,\delta'')
  $$
  in Hausdorff distance. We may assume, without loss of generality, that the first case is true. By compactness of $G(T_\jjj)$, it suffices to show that for every $\eta>0$ and $\pi\hhh\in (F_{\pi\kkk}^++x)\cap\pi[\kkk_l|_{n(\kkk_l)}]$ there exist $m_0\in\N$ such that $G_{k_m}(T_{k_m})\cap B(\pi\hhh,\eta)\neq\emptyset$ for all $m\geq m_0$. We may of course assume that $\pi\hhh\neq \pi\kkk$ since $\pi\kkk \in G(T_\jjj)$ trivially by the construction. Since $\pi\hhh\in(F^+_{\pi\kkk}+x)\cap\pi[\kkk_l|_{n(\kkk_l)}]$, we can choose $m_0$ so large that $|\pi\kkk-\pi\hhh| < \delta''$. Let $n\in\N$ be large enough so that $\pi[\hhh|_n]\subset B(\pi\hhh,\eta)$. Since $\pi\hhh$ is an element of the set $\pi[\kkk_l|_{n(\kkk_l)}]$ which is contained in the closed half-plane $P$, the set $\pi[\hhh|_n]$ is contained in $P$ as well. Let $\kappa$ be the supremum of positive real numbers for which $\pi[\hhh|_n]\setminus B(F+x,\kappa) \ne \emptyset$. Since $G_{k_m}(T_{k_m})\to G(T_\jjj)\subset F+x$ in Hausdorff distance, there exists $m_0$ such that $L_m \subset P \cap B(F+x,\kappa/2)$ for all $m\geq m_0$. But since we also have $\pi[\hhh|_n] \subset P$, $\pi\sigma^{n_{k_m}}\jjj \to \pi\kkk \neq \pi\hhh$ with $\pi\sigma^{n_{k_m}}\jjj$ being the other endpoint of of the line segment $L_m$, and $\diam (L_m) \geq \sin(\beta/4) \| G_m(T_m)) \| \geq \diam([\kkk_l|_{n(k_l)}])$, we have $L_m \cap \pi[\hhh|_n]\neq\emptyset$ by Lemma~\ref{lem:projcond}. See Figure~\ref{fig:projection_condition_in_use} for illustration.

\begin{figure}[t]
 \centering
\begin{tikzpicture}
%\draw[step=1.0,black,thin] (-10,-10) grid (10,10); %grid to help to determine the coordinates of the clip
\clip (-4,-3.5) rectangle (8,3); %to crop the picture
\begin{scope}[rotate=20] % everything is drawn horizontaly for convenience and then rotated to feel more like in general situation
%
% LINES F AND L_m AND THE HALF SPACE P
\draw[thick] (-6,0) -- (7,0); %The line F + x
\node[right] at (7,0) (F) {$F+x$};
\draw[|-|,thick] (0.3,-0.2) -- (7,-0.5); % the line L_m
\node at (2,-0.6) (L) {$L_m$}; %node for the line L_m
\node at (-2,-2) (P) {$P$}; %node for the half plane P
%
% CYLINDERS OF k AND h
\node[above] at (0,0) (k) {$\pi \kkk$}; % node for k
\draw[thick] (0,-0.07) -- (0,0.07); % marker for k
\node[above] at (4,0) (k) {$\pi \hhh$}; % node for h
\draw[thick] (4,-0.07) -- (4,0.07); % marker for h
\draw[decorate, decoration={snake, segment length=5, amplitude=1}] (-1,0) -- (-1,-3) -- (5,-3) -- (5,0); % cylinder of k
\node at (0,-2) (km) {$\pi [\kkk_l|_{n(k_l)}]$}; %node for the cylinder of k
\draw[decorate, decoration={snake, segment length=5, amplitude=1}] (3.8,0)-- (3.8,-1) -- (4.2,-1) -- (4.2,0); % cylinder of h
\draw (4,0) circle (1.1); % the ball B(h,\eta)
\coordinate (c) at ($ (4,0) + (120:1.1) $);
\node[left] at (c) (bketa) {$B(\pi\hhh,\eta)$}; %label for the ball B(\pi\kkk,\eta)
%
% KAPPA
\draw[thick,dashed] (-6,-1) -- (7,-1); %the half tube given by kappa
\draw[<->] (-3,-0.9) -- (-3,-0.1); %illustration of kappa
\node[right] at (-3,-0.5) (kappa) {$\kappa$}; % node kappa
%
% THE BALL WHERE WE MAP TO
%\draw [thick,domain=-40:30] plot ({6.5*cos(\x)}, {6.5*sin(\x)}); % arc to illustrate the ball
\draw (0,0) circle (6.5); % arc to illustrate the ball
\filldraw[white] (-45:4.8) circle (0.7); % to make room for the diameter label
\draw[] (0,0) -- ++ (-45:6.5); % line to indicete the radius of the ball
\draw[decorate,decoration={text along path,text={~ ${ \diam (\pi[\kkk_l|_{n(k_l)}]) }$~}}] (0,0.2) ++ (-45:3) -- ++(-45:3.5); % label of the radius of the ball
\end{scope}
\end{tikzpicture}
 \caption{By Lemma~\ref{lem:projcond}, the projection of $\pi[\hhh|_n]$ along the direction of the line segment $L_m$ is an interval. Since $L_m$ divides the cylinder $\pi[\hhh|_n]$ into two  parts, $L_m \cap \pi[\hhh|_n]$ has to be non-empty. }
 \label{fig:projection_condition_in_use}
\end{figure}
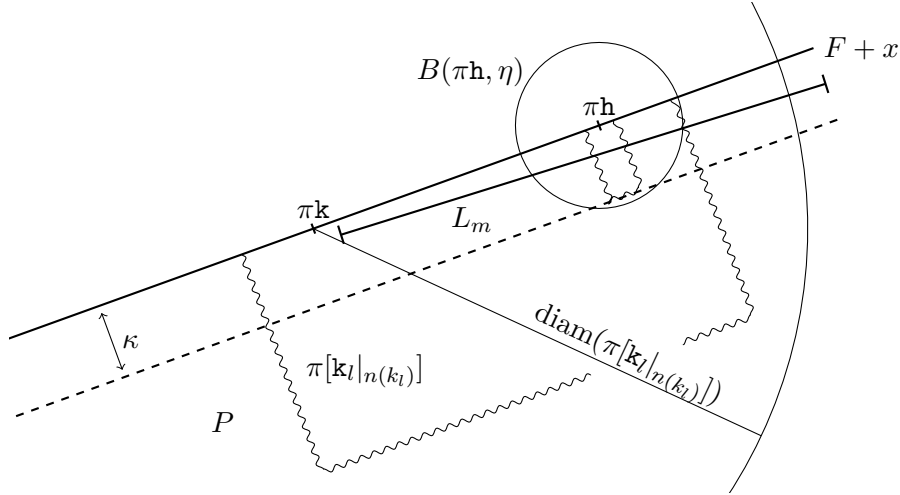

  By Theorem~\ref{thm:KKR}, $O_\jjj T_\jjj$ is a comb, where $O_\jjj$ is the rotation that takes $\vartheta_1(\jjj)$ to the $x$-axis, and thus,
  \begin{equation} \label{eq:assouadv3-4}
    \dimh(T_\jjj) = 1+\dimh(G(T_\jjj)).
  \end{equation}
  Furthermore, by Proposition~\ref{thm:KOR}, \eqref{eq:assouadv3-4}, \eqref{eq:need-1}, \eqref{eq:something-important}, \eqref{eq:assouadv3-case2}, and \eqref{eq:assouadv3-1},
  \begin{align*}
    \dima(X) &= \max\{\dimh(T'):T'\in\Tan(X)\} \ge \dimh(T_\jjj) \\
    &\ge 1 + \dimh((F^{\pm}_{\pi\kkk}+x) \cap \pi[\kkk_l|_{n(\kkk_l)}]) \\
    &\ge 1 + \dimh(X \cap (F+x) \cap B(x,\delta\sin(\beta/4))) - 2\eps \\
    &\ge 1 + \sup_{\atop{x'\in X}{F'\in\spt(\mu_F)}}\dimh(X\cap(F'+x')\cap B(x',\delta\sin(\beta/4))) - 3\eps.
  \end{align*}
  Therefore, by letting $\eps \downarrow 0$, the proof follows also under the assumption \eqref{eq:assouadv3-case2}.
\end{proof}

Relying on the estimates verified above, we are able to prove a refined version of Proposition \ref{thm:KOR} for the class of self-affine sets we are considering.

\begin{proposition}\label{thm:conf_assouad_minimality}
  Let $X$ be a planar self-affine set satisfying the strong separation condition and the projection condition. If for every $\eps>0$ there exists a fully supported $n$-step Bernoulli measure $\nu$ having simple Lyapunov spectrum such that $\dim(\pi\nu) \ge \dimh(X)-\eps$, then
  \begin{equation*}
    \dima(X) = \sup\{\dimh(T) : x \in X \text{ and } T \in \Tan(X,x) \text{ is a rotated comb} \}.
  \end{equation*}
\end{proposition}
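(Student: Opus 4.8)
The plan is to prove the two inequalities separately; write $S$ for the supremum on the right-hand side. The bound $S \le \dima(X)$ is immediate, since any rotated comb $T \in \Tan(X,x)$ is in particular a weak tangent set, so $T \in \Tan(X)$, and Proposition~\ref{thm:KOR} gives $\dimh(T) \le \dima(X)$; taking the supremum over all such $T$ yields $S \le \dima(X)$. For the reverse inequality I would reuse the construction already carried out in the proof of Theorem~\ref{thm:assouadv3}: the tangent set $T_\jjj$ produced there lies in $\Tan(X,\pi\jjj)$ and, by Theorem~\ref{thm:KKR}, is a rotated comb, while it satisfies $\dimh(T_\jjj) \ge 1 + \sup_{x\in X,\,F\in\spt(\mu_F)}\dimh(X\cap(F+x)\cap B(x,\delta\sin(\beta/4))) - 3\eps$. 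Letting $\eps\downarrow 0$, this shows that for any fully supported Bernoulli measure $\nu$ having simple Lyapunov spectrum,
\[
  S \ge 1 + \sup_{x\in X,\,F\in\spt(\mu_F)}\dimh\bigl(X\cap(F+x)\cap B(x,\delta\sin(\beta/4))\bigr).
\]

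The next ingredient is an elementary localization lemma: for a fixed $F\in\RP$ and a fixed $r>0$ one has $\sup_{x\in X}\dimh(X\cap(F+x)\cap B(x,r)) = \sup_{x\in X}\dimh(X\cap(F+x))$, where only ``$\ge$'' requires proof. Fixing $x_0$ and setting $d = \dimh(X\cap(F+x_0))$, I would choose $n$ so large that $\diam\varphi_\iii(X) < r$ for every $\iii\in\Sigma_n$; since $X\cap(F+x_0) = \bigcup_{\iii\in\Sigma_n}\varphi_\iii(X)\cap(F+x_0)$ is a finite union, finite stability of Hausdorff dimension yields some $\iii\in\Sigma_n$ with $\dimh(\varphi_\iii(X)\cap(F+x_0)) = d$. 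Any point $x$ of this piece lies on the line $F+x_0 = F+x$, and the whole piece is contained in $B(x,r)$, so $X\cap(F+x)\cap B(x,r) \supset \varphi_\iii(X)\cap(F+x_0)$ has dimension $d$. In particular the fixed radii appearing above are harmless.

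It remains to enlarge $\spt(\mu_F)$ to the full set $X_F$ of Furstenberg directions. By Lemma~\ref{lem:xfissupp} there are two fully supported $2$-step Bernoulli measures $\nu_1,\nu_2$ having simple Lyapunov spectrum for which $\spt(\mu_F^1)\cup\spt(\mu_F^2) = X_F$. Applying the displayed lower bound to each $\nu_i$ and then removing the ball by the localization lemma gives
\[
  S \ge 1 + \max_{i\in\{1,2\}}\sup_{x\in X,\,F\in\spt(\mu_F^i)}\dimh(X\cap(F+x)) = 1 + \sup_{x\in X,\,F\in X_F}\dimh(X\cap(F+x)) = \dima(X),
\]
where the last equality is Theorem~\ref{thm:assouad}. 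Together with $S\le\dima(X)$ this proves the proposition.

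The main obstacle is precisely this last passage from the support of a single Furstenberg measure, which one $\nu$ can detect, to the whole of $X_F$: in the reducible case $\spt(\mu_F)$ may be a proper subset of $X_F$, and one genuinely needs the two-measure decomposition of Lemma~\ref{lem:xfissupp}. Invoking that lemma forces the usual reduction to an iterate of the system (the hypothesis supplies $n$-step Bernoulli measures, which become genuine Bernoulli measures with simple Lyapunov spectrum on the iterate, and the measures $\nu_1,\nu_2$ live on a further iterate), so one should check that this reduction leaves $X$, the set $X_F$, and the family of rotated-comb tangent sets unchanged, so that the lower bound obtained for $\nu_1,\nu_2$ really is a statement about tangents of the original $X$.
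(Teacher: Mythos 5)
Your proof is correct and follows essentially the paper's own route: the easy inequality from Proposition~\ref{thm:KOR}, and the reverse inequality by extracting the rotated-comb tangent sets produced in the proof of Theorem~\ref{thm:assouadv3} (combs by Theorem~\ref{thm:KKR}), combined with Lemma~\ref{lem:xfissupp} and the reduction to iterates to cover all of $X_F$ --- your closing caveat about checking that $X$, $X_F$ and the tangent sets survive this reduction matches the paper's ``consider $2n$:th iterates'' step. The one place you diverge is the final step. The paper keeps the localized slices $X\cap(F+x)\cap B(x,\delta\sin(\beta/4))$ and bounds their supremum below by $\dima(X)-\eps$ directly via Theorem~\ref{thm:assouadv2}; you instead de-localize the slices with your cylinder-decomposition lemma and then quote the dimension formula of Theorem~\ref{thm:assouad}. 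That quotation is a forward reference: Theorem~\ref{thm:assouad} is proved after this proposition, and its minimality claim cites the proposition. There is no genuine circularity, since the dimension-formula half of Theorem~\ref{thm:assouad} rests only on Theorems~\ref{thm:assouadv2} and~\ref{thm:assouadv3}, Lemma~\ref{lem:xfissupp}, and the same decomposition argument you prove; but it is cleaner, and costs nothing, to finish as the paper does by citing Theorem~\ref{thm:assouadv2} with $\delta'=\delta\sin(\beta/4)$ --- your localization lemma already shows the localized and unlocalized suprema agree, so the substitution is immediate. Your localization lemma is in substance the compactness-plus-stability argument the paper runs at the end of the proof of Theorem~\ref{thm:assouad}; what it buys you here is that the radius of localization (which depends on the measure $\nu_i$ through $\beta$ and on the iterate through $\delta$) becomes irrelevant, which quietly tidies the paper's display where a single $\beta$ is written for both measures $\nu_1,\nu_2$.
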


\begin{proof}
  Fix $\eps>0$ and let $\nu$ be a fully supported $n$-step Bernoulli measure $\nu$ having simple Lyapunov spectrum such that $\dim(\pi\nu) \ge \dimh(X)-\eps$. By Lemma \ref{lem:xfissupp}, there exist two fully supported $2n$-step Bernoulli measures $\nu_1$ and $\nu_2$ having simple Lyapunov spectrum such that $\spt(\mu_F^1) \cup \spt(\mu_F^2) = X_F$, where $\mu_F^i$ is the Furstenberg measure with respect to $\nu_i$. By considering $2n$:th iterates of the maps and matrices, we may, to simplify notation, assume that $\nu,\nu_1$, and $\nu_2$ are Bernoulli measures.

  Recalling the proof of Theorem \ref{thm:assouadv3} and Theorem \ref{thm:KKR}, we see that for both $i \in \{1,2\}$ there exist $x_i \in X$ and a rotated comb $T_i \in \Tan(X,x_i)$ such that
  \[
    \dimh(T_i) \geq  1+\sup_{\atop{x\in X}{F\in\spt(\mu^i_F)}}\dimh(X\cap(F+x)\cap B(x,\delta\sin(\beta/4)))-\varepsilon
  \]
  and hence,
  \[
    \max_{i \in \{1,2\}} \dimh(T_i) \geq 1 + \sup_{\atop{x\in X}{F\in X_F }}\dimh(X\cap(F+x)\cap B(x,\delta\sin(\beta/4))) - \eps \geq \dima(X) -\eps,
  \]
 where the last inequality holds by Theorem \ref{thm:assouadv2}.
\end{proof}

Finally, we are ready to prove our second main theorem.

\begin{proof}[Proof of Theorem~\ref{thm:assouad}]
  Fix $\eps>0$ and notice that, by Proposition \ref{thm:conf_assouad_minimality}, there exist $x \in X$ and a rotated comb $T \in \Tan(X,x)$ such that $\dimh(T) \ge \dima(X)-\eps$. Recall that, by \cite[Theorem 4.1.11]{MackayTyson}, combs are minimal for the conformal Hausdorff dimension and hence, $\cdimh(T) = \dimh(T)$. Since, by \cite[Proposition~2.1]{Mackay}, the conformal Assouad dimension does not increase in taking tangents, we conclude
  \begin{equation*}
    \cdima(X) \geq \cdima(T) \ge \cdimh(T) = \dimh(T) \geq \dima(X) - \eps.
  \end{equation*}
  By letting $\eps \downarrow 0$, we have thus shown that $X$ is minimal for the conformal Assouad dimension.

  Let us then prove that the Assouad dimension of $X$ equals the claimed value. Since the Hausdorff dimension of $X$ can be approximated by the dimensions of $n$-step Bernoulli measures having simple Lyapunov spectrum, it follows from Theorem~\ref{thm:assouadv2} that
  \begin{equation*}
    \dima(X) \leq 1+\sup_{\atop{x\in X}{F\in X_F}} \dimh(X \cap (F+x) \cap B(x,\delta\sin(\beta/4))).
  \end{equation*}
  Furthermore, if $\nu$ is any fully supported Bernoulli measure having simple Lyapunov spectrum, then Theorem~\ref{thm:assouadv3} implies
	$$
		\dima(X) \geq 1 + \sup_{\atop{x\in X}{F\in\spt(\mu_F)}} \dimh(X\cap(F+x)\cap B(x,\delta\sin(\beta/4))),
	$$
  where $\mu_F$ is the Furstenberg measure with respect to $\nu$. Recalling Lemma~\ref{lem:xfissupp}, we get
  $$
    \dima(X) \geq 1+\sup_{\atop{x\in X}{F\in X_F}} \dimh(X \cap (F+x) \cap B(x,\delta\sin(\beta/4))).
  $$
  By compactness of $X$, every slice $X\cap(F+x)$ can be decomposed into finitely many sets of the form $X\cap(F+x)\cap B(y_i,\delta\sin(\beta/4))$, where $y_i\in X\cap(F+x)$. Since $F+x=F+y_i$, the statement follows as the Hausdorff dimension is countably stable.
\end{proof}

%\bibliographystyle{abbrv}
%\bibliography{Bibliography}

\end{document}